\title{On monotone alternating inverse monoids}
\author{
\textbf{V\'{\i}tor Hugo Fernandes}\\
Center for Mathematics and Applications (NOVA Math) 
and Department of Mathematics,  \\ 
Faculdade de Ci\^{e}ncias e Tecnologia, 
Universidade Nova de Lisboa, 
2829-516 Caparica, 
Portugal\\ 
E-mail: vhf@fct.unl.pt\\
ORCID iD: https://orcid.org/0000-0003-1057-4975\\ 
}
\date{}
\newtheorem{theorem}{Theorem}[section]
\newtheorem{corollary}[theorem]{Corollary}
\newtheorem{lemma}[theorem]{Lemma}
\newtheorem{proposition}[theorem]{Proposition}
\def\dom{\mathop{\mathrm{Dom}}\nolimits}
\def\im{\mathop{\mathrm{Im}}\nolimits}
\def\rank{\mathop{\mathrm{rank}}\nolimits} 
\def\con{\mathop{\mathrm{Con}}\nolimits} 
\def\gd{\mathrm{d}}
\def\gi{\mathrm{i}}
\def\id{\mathrm{id}}
\def\Z{\mathbb Z}
\def\Sym{\mathcal{S}}
\def\A{\mathcal{A}}
\def\POI{\mathcal{POI}}
\def\PMI{\mathcal{PMI}}
\def\I{\mathcal{I}}
\def\AI{\mathcal{AI}}
\def\AO{\mathcal{AO}} 
\def\AM{\mathcal{AM}} 
\newcommand{\transf}[1]{\left(\begin{smallmatrix} #1 \end{smallmatrix}\right)}
\renewcommand{\mod}[1]{\,(\mathrm{mod}{\,#1})}
\newcommand{\conpi}[1]{\uppi_{\mbox{$\!_{#1}$}}}
\newcommand{\contheta}[1]{\uptheta_{\mbox{$\!_{#1}$}}}
\newcommand{\rees}[1]{\sim_{\mbox{$\!_{#1}$}}}
\newenvironment{proof}{\begin{trivlist}\item[\hskip%
\labelsep{\bf\em Proof.}]}%
{\qed\rm\end{trivlist}}
\newcommand{\qed}{{\unskip\nobreak
\hfil\penalty50\hskip .001pt \hbox{}
          \nobreak\hfil
         {\scriptsize$\Box$}
           \parfillskip=0pt\finalhyphendemerits=0\medbreak}}
\begin{document}

\maketitle 

\begin{abstract} 
In this paper, we consider the inverse submonoids $\AM_n$ of monotone transformations and $\AO_n$ of order-preserving transformations 
of the alternating inverse monoid $\AI_n$ on a chain with $n$ elements. We compute the cardinalities, describe the Green's structures and the congruences, 
and calculate the ranks of these two submonoids of $\AI_n$. 
\end{abstract}

\noindent{\small\it Keywords: \rm alternating partial permutations, monotone, order-preserving, congruences, rank.}  

\medskip 

\noindent{\small 2020 \it Mathematics subject classification: \rm 20M20, 20M18, 20M10.}  

\section*{Introduction and Preliminaries}\label{Int} 

For a positive integer $n$, let $\Omega_n$ be a (finite) set with $n$ elements, e.g. $\Omega_n=\{1,2,\ldots,n\}$. 
We denote by $\Sym_n$  the \textit{symmetric group} on $\Omega_n$,
i.e. the group (under composition of mappings) of all permutations on $\Omega_n$, 
and by $\I_n$ the \textit{symmetric inverse monoid} on $\Omega_n$, i.e.
the inverse monoid (under composition of partial mappings) of all partial permutations (i.e. injective partial transformations) on $\Omega_n$.   
Transformation semigroups play a role in Semigroup Theory analogous to that of the symmetric group in Group Theory, while semigroups of partial permutations play a corresponding role in Inverse Semigroup Theory. 
This analogy is supported by two simple but fundamental results that parallel Cayley's theorem for groups. In Semigroup Theory, every semigroup is isomorphic to a subsemigroup of an appropriate full transformation semigroup. In Inverse Semigroup Theory, the Wagner--Preston Theorem states that every inverse semigroup is isomorphic to a subsemigroup of an appropriate symmetric inverse semigroup. 
In this paper, we focus on a particular class of partial permutations, namely the monotone injective partial transformations of a finite chain.

\smallskip 

Let $G$ be a subgroup of $\Sym_n$ and let $\I_n(G)=\{\alpha\in\I_n\mid \mbox{$\alpha=\sigma|_{\dom(\alpha)}$, for some $\sigma\in G$}\}$.  
It is clear that $\I_n(G)$ is an inverse submonoid of $\I_n$ containing the semilattice $\mathcal{E}_n$ of all idempotents of $\I_n$ and with $G$ as group of units. 
By taking $G=\Sym_n$, $G=\A_n$ or $G=\{\id_n\}$,
where $\A_n$ denotes the \textit{alternating group} on $\Omega_n$ and $\id_n$ is the identity transformation of $\Omega_n$, 
we obtain important and well-known inverse submonoids of $\I_n$. In fact, clearly,  
$\I_n(\Sym_n)=\I_n$ and $\I_n(\{\id_n\})=\mathcal{E}_n$. 
On the other hand, as we will show below, we have 
$\I_n(\A_n)=\AI_n$, the \textit{alternating semigroup}. 
Furthermore, the monoids $\I_n(\mathcal{C}_n)$ and $\I_n(\mathcal{D}_{2n})$, where $\mathcal{C}_n$ is a cyclic subgroup of $\Sym_n$ of order $n$ 
and $\mathcal{D}_{2n}$ is a dihedral subgroup of $\Sym_n$ of order $2n$,  
were studied in \cite{Fernandes:2024} and \cite{Fernandes&Paulista:2023}, respectively.  

\smallskip 

From now on, we will consider $\Omega_n$ as a chain,  e.g. $\Omega_n=\{1<2<\cdots<n\}$. 
In this context, an element $\alpha\in\I_n$ is called \textit{order-preserving}
[respectively, \textit{order-reversing}] if $x\leqslant y$ implies $x\alpha\leqslant y\alpha$
[respectively, $x\alpha\geqslant y\alpha$], for all $x,y \in \dom(\alpha)$.
A partial permutation is said to be \textit{monotone} if it is order-preserving or order-reversing.  
Let us denote by $\POI_n$
the inverse submonoid of $\I_n$ of all order-preserving
partial permutations and by $\PMI_n$ the inverse submonoid of $\I_n$
of all monotone partial permutations. 
Semigroups of monotone and of order-preserving transformations have been studied very extensively in recent decades. 
See, for example, 
\cite{Aizenstat:1962,Aizenstat:1962b,Cowan&Reilly:1995,Delgado&Fernandes:2000,Derech:1991,
Dimitrova&Koppitz:2009,Dimitrova&Koppitz:2023, 
Fernandes:1997,Fernandes:2000,Fernandes:2001,Fernandes:2024,Fernandes&Gomes&Jesus:2004,
Fernandes&Gomes&Jesus:2005,Fernandes&Paulista:2023,Fernandes&Volkov:2010,Ganyushkin&Mazorchuk:2003,
Garba:1994,Gomes&Howie:1992,Higgins:1995,Howie:1971,Laradji&Umar:2004,
Li&Fernandes:2024,Popova:1962,Schein&Teclezghi:1997,Vernitskii&Volkov:1995}.  

\smallskip 

In the present work, we consider the inverse submonoids $\AM_n=\AI_n\cap\PMI_n$ of monotone transformations and $\AO_n=\AI_n\cap\POI_n$ of order-preserving transformations of the alternating inverse monoid $\AI_n$. 
We aim to compute the cardinalities, describe the Green's structures and the congruences, and calculate the ranks of $\AM_n$ and $\AO_n$. 

\smallskip 

Recall that the \textit{rank} of a (finite) monoid $M$ is the minimum size of a generating set of $M$, i.e. 
the minimum of the set $\{|X|\mid \mbox{$X\subseteq M$ and $X$ generates $M$}\}$. 
As usual, we also use the term \textit{rank} relative to a transformation $\alpha$ of $\Omega_n$ to mean the number $|\im(\alpha)|$. 
Recall also that, for a monoid $M$, the Green's equivalence relations $\mathscr{L}$, $\mathscr{R}$, $\mathscr{J}$ and $\mathscr{H}$ are defined by
$a\mathscr{L}b$ if and only if $Ma=Mb$,  
$a\mathscr{R}b$ if and only if $aM=bM$,
$a\mathscr{J}b$ if and only if $MaM=MbM$ and 
$a\mathscr{H}b$ if and only if $a\mathscr{L}b$ and $a\mathscr{R}b$, for $a,b\in M$. 
In particular, given an inverse submonoid $M$ of $\I_n$,
it is well known that the Green's relations $\mathscr{L}$, $\mathscr{R}$ and $\mathscr{H}$ of $M$ can be described as follows:
for $\alpha, \beta \in M$,
\begin{itemize}
\item $\alpha \mathscr{L} \beta$ if and only if $\im(\alpha) = \im(\beta)$,

\item $\alpha \mathscr{R} \beta$ if and only if $\dom(\alpha) = \dom(\beta)$, and

\item $\alpha \mathscr{H} \beta$ if and only if $\im(\alpha) = \im(\beta)$ and $\dom(\alpha) = \dom(\beta)$.
\end{itemize}
Especially for $\I_n$, we also have
\begin{itemize}
\item $\alpha \mathscr{J} \beta$ if and only if $|\dom(\alpha)| = |\dom(\beta)|$ (if and only if $|\im(\alpha)| = |\im(\beta)|$).
\end{itemize}

Denote by $J_{a}$ the $\mathscr{J}$-class
of the element $a\in M$. As usual, a partial order relation
$\leqslant_\mathscr{J}$ is defined on the set $M/\mathscr{J}$ by
setting $J_{a}\leqslant_\mathscr{J}J_{b}$ if and only if 
$MaM\subseteq MbM$, for $a,b\in M$. We also 
write $J_{a}<_\mathscr{J}J_{b}$ if
and only if $J_a\leqslant_\mathscr{J}J_b$ and $(a, b)\not\in\mathscr{J}$, for $a, b\in M$. 
We denote by $E(M)$ the set of all idempotents of $M$  
and by $\leqslant$ the natural order on $E(M)$, i.e. its partial order relation defined  by
$e\leqslant f$ if and only if $ef=e=fe$, for $e, f\in E(M)$. 
An \textit{ideal} of $M$ is a subset $I$ of $M$ such that
$MIM\subseteq I$.  
The \textit{Rees congruence} of $M$ associated to an ideal $I$ of $M$ 
is the congruence $\rees{I}$ defined by $a\rees{I}b$ if and only if $a=b$ or
$a,b\in I$, for $a, b\in M$. 
We denote by $\id$ and $\omega$ the identity and universal congruences of $M$, respectively. 
The congruence \textit{lattice} (under the inclusion order relation) of $M$ is denoted by $\con(M)$. 

\medskip 

In this paper, the symbol $\subset$ stands for \textit{strictly} contained and, 
from now on, we will always consider $n\geqslant 2$.  

\smallskip 

Next, we recall the definition of the alternating semigroup $\AI_n$ (denoted by $\A^c_n$ in \cite{Lipscomb:1996}) 
and justify the equality $\I_n(\A_n)=\AI_n$ stated above. 

Let $J_k^{\I_n}=\{\alpha\in\I_n\mid |\im(\alpha)|=k\}$, for $0\leqslant k\leqslant n$. 
It is well known that 
$$
\I_n/\mathscr{J}=\{J_0^{\I_n}<_{\mathscr{J}}J_1^{\I_n}<_{\mathscr{J}}\cdots<_{\mathscr{J}}J_n^{\I_n}\}
$$ 
and, regarding the size of these $\mathscr{J}$-classes, we have $|J_k^{\I_n}|=\binom{n}{k}^2k!$, for $0\leqslant k\leqslant n$. 
Observe also that $J_0^{\I_n}=\{\emptyset\}$, 
where $\emptyset$ is the empty transformation of $\Omega_n$, 
and $J_n^{\I_n}=\Sym_n$. 

Let $\alpha\in J_{n-1}^{\I_n}$. Denote by $\overline{\alpha}$ the \textit{completion} of $\alpha$, i.e. $\overline{\alpha}$ is the unique permutation of 
$\Sym_n$ such that $\overline{\alpha}|_{\dom(\alpha)}=\alpha$. Let 
$J_{n-1}^{\A_n}=\{\alpha\in J_{n-1}^{\I_n}\mid \overline{\alpha}\in\A_n\}$. 
Notice that $|J_{n-1}^{\A_n}|=\frac{1}{2}n!n=\frac{1}{2}|J_{n-1}^{\I_n}|$. Then, we have 
$$
\AI_n=\A_n\cup J_{n-1}^{\A_n}\cup J_{n-2}^{\I_n}\cup\cdots\cup J_1^{\I_n}\cup J_0^{\I_n}
$$
(see \cite[Theorems 25.1 and 25.2]{Lipscomb:1996}) and so $|\AI_n|=\frac{1}{2}n!+\frac{1}{2}n!n+\sum_{k=0}^{n-2}\binom{n}{k}^2k!$. 

Now, observe that, clearly, $\I_n(\A_n)\cap J_n^{\I_n}=\A_n=\AI_n\cap J_n^{\I_n}$ and 
$\I_n(\A_n)\cap J_{n-1}^{\I_n}=J_{n-1}^{\A_n}=\AI_n\cap J_{n-1}^{\I_n}$. 
Let $\alpha\in J_k^{\I_n}$, for some $0\leqslant k\leqslant n-2$, and 
let $\sigma\in\Sym_n$ be any permutation such that $\sigma|_{\dom(\alpha)}=\alpha$. 
Let $i$ and $j$ be any two distinct elements of $\Omega_n\setminus\im(\alpha)$ and let $\tau$ be the transposition $(i\:j)$ of $\Sym_n$. 
Then, we also have $(\sigma\tau)|_{\dom(\alpha)}=\alpha$ and, since $\sigma\in\A_n$ if and only if $\sigma\tau\not\in\A_n$, we deduce that $\alpha\in\I_n(\A_n)$. 
Thus, $\I_n(\A_n)=\AI_n$. 

\medskip 

This paper is organized as follows. In Sections \ref{AO} and \ref{AM}, we define the structures of monoids $\AO_n$ and $\AM_n$, respectively. In Section \ref{con}, we describe their congruences. Finally, in Sections \ref{rAO} and \ref{rAM}, we calculate the ranks of $\AO_n$ and $\AM_n$, respectively.

\medskip 

We end this section by introducing some notation that we will use in subsequent sections.  

For $\alpha\in J_{n-1}^{\I_n}$, define $\gd(\alpha)$ and $\gi(\alpha)$ as the \textit{gaps} of $\dom(\alpha)$ and $\im(\alpha)$, respectively, 
i.e. $\{\gd(\alpha)\}=\Omega_n\setminus\dom(\alpha)$ and $\{\gi(\alpha)\}=\Omega_n\setminus\im(\alpha)$.  
For subsets $A$ and $B$ of $\Omega_n$ such that $|A|=|B|$, 
by $\transf{A\\B}$, we mean that $\alpha\in\I_n$ with $A=\dom(\alpha)$ and $B=\im(\alpha)$. 
Observe that, for any subsets $A$ and $B$ of $\Omega_n$ such that $|A|=|B|$, 
there exists a unique order-preserving [respectively, order-reversing] partial permutation $\alpha$ of $\Omega_n$ such that $\alpha=\transf{A\\B}$. 

\smallskip 

For general background on Semigroup Theory and standard notations, we refer to Howie's book \cite{Howie:1995}.

\smallskip 

We would like to point out that we made use of computational tools, namely GAP \cite{GAP4} and its package \cite{sgpviz}.

\section{The structure of $\AO_n$} \label{AO} 

Recall that $\POI_n/\mathscr{J}=\{J_0^{\POI_n}<_{\mathscr{J}}J_1^{\POI_n}<_{\mathscr{J}}\cdots<_{\mathscr{J}}J_n^{\POI_n}\}$, 
where $J_k^{\POI_n}=J_k^{\I_n}\cap\POI_n$, for $0\leqslant k\leqslant n$. 
In this case, we have $|J_k^{\POI_n}|=\binom{n}{k}^2$, for $0\leqslant k\leqslant n$, with 
$J_0^{\POI_n}=\{\emptyset\}$ and $J_n^{\POI_n}=\{\id_n\}$,  
see \cite{Fernandes:2001}. 

Obviously, the group of units $J_n=J_n^{\POI_n}\cap\AO_n$ (which is also a $\mathscr{J}$-class and a $\mathscr{H}$-class) of $\AO_n$ is trivial 
(as well as all its other $\mathscr{H}$-classes) and, on the other hand, we have 
$$
\{\alpha\in\AO_n\mid |\im(\alpha)|\leqslant n-2\}=\{\alpha\in\POI_n\mid |\im(\alpha)|\leqslant n-2\}=J_{n-2}^{\POI_n}\cup\cdots\cup J_1^{\POI_n}\cup J_0^{\POI_n}.
$$
Hence, it remains to characterize the elements of $\AO_n$ with rank $n-1$, which we will do next.

\begin{proposition}\label{chAO}
Let $\alpha\in J_{n-1}^{\POI_n}$. Then, $\alpha\in\AO_n$ if and only if $\gd(\alpha)$ and $\gi(\alpha)$ have the same parity. 
\end{proposition}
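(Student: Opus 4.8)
The plan is to reduce the statement to a sign computation for the completion $\overline{\alpha}$. Since $\alpha$ already lies in $\POI_n$ by hypothesis, membership in $\AO_n=\AI_n\cap\POI_n$ is equivalent to $\alpha\in\AI_n$. As $\alpha\in J_{n-1}^{\I_n}$, the description $\AI_n\cap J_{n-1}^{\I_n}=J_{n-1}^{\A_n}$ recalled in the preliminaries shows that $\alpha\in\AO_n$ if and only if $\overline{\alpha}\in\A_n$, i.e. if and only if $\overline{\alpha}$ is an even permutation. So the whole proposition amounts to determining the parity of $\overline{\alpha}$ in terms of $d=\gd(\alpha)$ and $i=\gi(\alpha)$.

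Next I would write $\overline{\alpha}$ down explicitly. Because $\alpha$ is order-preserving with $\dom(\alpha)=\Omega_n\setminus\{d\}$ and $\im(\alpha)=\Omega_n\setminus\{i\}$, it is the unique increasing bijection between these two sets, so it sends the $k$-th smallest element of its domain to the $k$-th smallest element of its image; the completion then forces $d\overline{\alpha}=i$. Tracking positions (say in the case $d\leqslant i$; the case $d\geqslant i$ is symmetric) one finds that $\overline{\alpha}$ fixes every point outside the interval $[d,i]$ and acts on $\{d,d+1,\ldots,i\}$ by $d\mapsto i$ and $x\mapsto x-1$ for $d<x\leqslant i$. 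Hence $\overline{\alpha}$ is the single cycle $(d\; i\; i-1\; \cdots\; d+1)$ of length $i-d+1$ (reducing to the identity when $d=i$).

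Finally, a cycle of length $\ell$ is even precisely when $\ell-1$ is even, so $\overline{\alpha}$ is even if and only if $(i-d+1)-1=i-d$ is even, that is, if and only if $d$ and $i$ have the same parity. Combined with the first step, this gives $\alpha\in\AO_n$ if and only if $\gd(\alpha)$ and $\gi(\alpha)$ have the same parity, as required.

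The only genuinely delicate point is the explicit identification of $\overline{\alpha}$ as a single cycle: one must verify carefully, via the position-counting argument, that no other points are moved and that the images chain up into exactly one cycle rather than a product of several. Once this is pinned down the parity computation is immediate, and the case $d\geqslant i$ requires only the obvious bookkeeping change (the cycle becomes $(d\; i\; i+1\; \cdots\; d-1)$ of length $d-i+1$), yielding the same parity criterion.
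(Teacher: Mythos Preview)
Your proof is correct and follows essentially the same route as the paper: both arguments identify the completion $\overline{\alpha}$ explicitly as the single cycle supported on the interval between $\gd(\alpha)$ and $\gi(\alpha)$ and then read off its parity. The only difference is organizational---the paper splits into five cases according to where $\gi(\alpha)$ falls among the ordered values $1\alpha,\ldots,(i-1)\alpha,(i+1)\alpha,\ldots,n\alpha$, whereas you handle it in two symmetric cases ($d\leqslant i$ versus $d\geqslant i$)---but the content is the same.
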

\begin{proof}
Let $i=\gd(\alpha)$ and $j=\gi(\alpha)$. 
Then, 
$
\Omega_n=\{1\alpha<\cdots<(i-1)\alpha<(i+1)\alpha<\cdots<n\alpha\}\cup\{j\}. 
$
So, let us consider the different cases depending on the position of $j$ relative to the image of $\alpha$. 

If $j<1\alpha$, then 
$$
j=1, 1\alpha=2,\ldots,(i-1)\alpha=i, (i+1)\alpha=i+1,\ldots,n\alpha=n, 
$$
whence $\overline\alpha=(1\:2\:\cdots\:i)$, 
and so $\alpha\in\AO_n$ if and only if $i-1$ is an even number. 

If $(k-1)\alpha<j<k\alpha$, for some $2\leqslant k\leqslant i-1$, then 
$$
1\alpha=1,\ldots,(k-1)\alpha=k-1, j=k, k\alpha=k+1,\ldots,(i-1)\alpha=i, (i+1)\alpha=i+1,\ldots,n\alpha=n, 
$$
whence $j<i$ and $\overline\alpha=(j\:j+1\:\cdots\:i)$, and so $\alpha\in\AO_n$ if and only if $i-j$ is an even number. 

If $(i-1)\alpha<j<(i+1)\alpha$, then 
$$
1\alpha=1,\ldots,(i-1)\alpha=i-1, j=i, (i+1)\alpha=i+1,\ldots,n\alpha=n, 
$$
whence $\overline\alpha=\id_n$, and so $\alpha\in\AO_n$. 

If $k\alpha<j<(k+1)\alpha$, for some $i+1\leqslant k\leqslant n-1$, then 
$$
1\alpha=1,\ldots,(i-1)\alpha=i-1, (i+1)\alpha=i,\ldots,k\alpha=k-1, j=k, (k+1)\alpha=k+1,\ldots,n\alpha=n, 
$$
whence $i<j$ and $\overline\alpha=(j\:j-1\:\cdots\:i)$, and so $\alpha\in\AO_n$ if and only if $j-i$ is an even number. 

Finally, if $n\alpha<j$, then 
$$
1\alpha=1,\ldots,(i-1)\alpha=i-1, (i+1)\alpha=i, \ldots, n\alpha=n-1, j\alpha=n,  
$$
whence $\overline\alpha=(n\:n-1\:\cdots\:i)$, 
and so $\alpha\in\AO_n$ if and only if $n-i$ is an even number. 

Thus, we can conclude that $\alpha\in\AO_n$ if and only if $i$ and $j$ have the same parity, as required. 
\end{proof} 

Since in $\Omega_n$ we have $\lfloor\frac{n}{2}\rfloor$ even numbers and $\lceil\frac{n}{2}\rceil$ odd numbers 
(i.e. $\frac{n}{2}$ even numbers and $\frac{n}{2}$ odd numbers, if $n$ is even, and 
$\frac{n-1}{2}$ even numbers and $\frac{n+1}{2}$ odd numbers, 
if $n$ is odd), then we obtain $2(\frac{n}{2})^2$ pairs with the same parity, if $n$ is even, 
and $(\frac{n-1}{2})^2+(\frac{n+1}{2})^2$ pairs with the same parity, if $n$ is odd. 
Therefore,
\begin{equation}\label{n-1o}
|\{\alpha\in\AO_n\mid |\im(\alpha)|=n-1\}|=\left\{
\begin{array}{ll}
\frac{1}{2}n^2 & \mbox{if $n$ is even}\\
\frac{n^2+1}{2} & \mbox{if $n$ is odd.}
\end{array}
\right. 
\end{equation}
and so, as $|J_{n-1}^{\POI_n}|=n^2$ and $|\POI_n|=\binom{2n}{n}$, we get the following result.

\begin{proposition}\label{carAO}
$|\AO_n|=\binom{2n}{n}-\lfloor\frac{1}{2}n^2\rfloor$.
\end{proposition}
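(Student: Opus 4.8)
The plan is to compute $|\AO_n|$ by summing the sizes of all $\mathscr{J}$-classes, using the decomposition of $\AO_n$ by rank that the excerpt has already set up. Recall from the preceding discussion that the elements of $\AO_n$ of rank at most $n-2$ coincide with those of $\POI_n$, the unique rank-$n$ element is $\id_n$, and the rank-$(n-1)$ count is given by equation~\eqref{n-1o}. So the starting point is the identity
$$
|\AO_n|=|\POI_n|-|\{\alpha\in\POI_n\mid|\im(\alpha)|=n-1\}|+|\{\alpha\in\AO_n\mid|\im(\alpha)|=n-1\}|,
$$
obtained by taking all of $\POI_n$, removing the rank-$(n-1)$ part that does not survive into $\AO_n$, and adding back the rank-$(n-1)$ part that does.

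First I would substitute the three known quantities: $|\POI_n|=\binom{2n}{n}$ and $|J_{n-1}^{\POI_n}|=n^2$ are quoted at the start of the section from \cite{Fernandes:2001}, and the rank-$(n-1)$ count inside $\AO_n$ is exactly \eqref{n-1o}. This reduces the whole problem to the arithmetic identity
$$
n^2-|\{\alpha\in\AO_n\mid|\im(\alpha)|=n-1\}|=\left\lfloor\tfrac{1}{2}n^2\right\rfloor,
$$
which is what must be checked in order to conclude $|\AO_n|=\binom{2n}{n}-\lfloor\frac{1}{2}n^2\rfloor$.

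Next I would verify this identity by splitting on the parity of $n$, mirroring the case analysis already used to derive \eqref{n-1o}. If $n$ is even, the surviving rank-$(n-1)$ count is $\frac{1}{2}n^2$, so the difference is $n^2-\frac{1}{2}n^2=\frac{1}{2}n^2=\lfloor\frac{1}{2}n^2\rfloor$. If $n$ is odd, the count is $\frac{n^2+1}{2}$, so the difference is $n^2-\frac{n^2+1}{2}=\frac{n^2-1}{2}=\lfloor\frac{1}{2}n^2\rfloor$, the last equality holding because $n^2$ is odd exactly when $n$ is odd. In both cases the difference equals $\lfloor\frac{1}{2}n^2\rfloor$, which gives the claimed formula.

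There is no real obstacle here: the substantive combinatorial content — Proposition~\ref{chAO} characterizing which rank-$(n-1)$ order-preserving maps lie in $\AO_n$, and the parity-counting that produces \eqref{n-1o} — has already been carried out before the statement. The only thing that could go wrong is a bookkeeping slip in assembling the pieces, in particular making sure the single rank-$n$ element $\id_n$ is counted consistently (it lies in both $\POI_n$ and $\AO_n$, so it neither gets removed nor added back and causes no correction) and that the uniform expression $\lfloor\frac{1}{2}n^2\rfloor$ correctly unifies the two parity cases.
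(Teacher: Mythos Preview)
Your proposal is correct and follows essentially the same approach as the paper: subtract the $n^2$ rank-$(n-1)$ elements of $\POI_n$ from $|\POI_n|=\binom{2n}{n}$, add back the count from \eqref{n-1o}, and verify that the resulting parity-dependent correction equals $\lfloor\frac{1}{2}n^2\rfloor$. The paper compresses this into a single sentence just before the proposition, while you spell out the arithmetic, but the argument is identical.
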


For $0\leqslant k\leqslant n-2$, let us denote $J_k^{\POI_n}$ simply by $J_k$. It is easy to conclude that $J_0,J_1,\ldots,J_{n-2}$ 
are also $\mathscr{J}$-classes of $\AO_n$. Moreover, we have $J_0<_\mathscr{J}J_1<_\mathscr{J}\cdots<_\mathscr{J}J_{n-2}<_\mathscr{J}J_n$  in $\AO_n$. 
Regarding the elements of $\AO_n$ with rank $n-1$, we have the following result. 

\begin{proposition}\label{relJ}
Let $\alpha,\beta\in\AO_n$ be two elements of rank $n-1$. Then, $\alpha\mathscr{J}\beta$ if and only if $\gd(\alpha)$ and $\gd(\beta)$ have the same parity. 
\end{proposition}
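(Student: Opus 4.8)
The plan is to reduce everything to Proposition~\ref{chAO} through the Green's structure of the finite inverse monoid $\AO_n$. Since $\AO_n=\AI_n\cap\POI_n$ is an inverse submonoid of $\I_n$ and is finite, we have $\mathscr{J}=\mathscr{D}$, and in any semigroup $\mathscr{D}=\mathscr{R}\circ\mathscr{L}=\mathscr{L}\circ\mathscr{R}$. Concretely, $\alpha\mathscr{J}\beta$ holds in $\AO_n$ if and only if there exists $\gamma\in\AO_n$ with $\alpha\mathscr{R}\gamma$ and $\gamma\mathscr{L}\beta$, which by the descriptions of $\mathscr{R}$ and $\mathscr{L}$ recalled in the Introduction means $\dom(\gamma)=\dom(\alpha)$ and $\im(\gamma)=\im(\beta)$.

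Next I would observe that such a $\gamma$ is forced. Indeed $|\dom(\alpha)|=|\im(\beta)|=n-1$, so any such $\gamma$ has rank $n-1$; being in $\AO_n\subseteq\POI_n$ it is order-preserving; and, as recalled in the Preliminaries, there is a unique order-preserving partial permutation with prescribed domain and image. Hence $\gamma$ must be the unique order-preserving map $\gamma=\transf{\dom(\alpha)\\\im(\beta)}$, and the existence of the required $\gamma\in\AO_n$ is equivalent to this particular order-preserving map lying in $\AO_n$. Moreover, this $\gamma$ satisfies $\gd(\gamma)=\gd(\alpha)$ and $\gi(\gamma)=\gi(\beta)$.

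It then remains to apply Proposition~\ref{chAO} twice. First, by Proposition~\ref{chAO} the element $\gamma$ belongs to $\AO_n$ if and only if $\gd(\gamma)=\gd(\alpha)$ and $\gi(\gamma)=\gi(\beta)$ have the same parity; hence $\alpha\mathscr{J}\beta$ if and only if $\gd(\alpha)$ and $\gi(\beta)$ have the same parity. Second, since $\beta\in\AO_n$ has rank $n-1$, Proposition~\ref{chAO} applied to $\beta$ gives that $\gd(\beta)$ and $\gi(\beta)$ have the same parity; substituting $\gd(\beta)$ for $\gi(\beta)$ in the previous equivalence yields exactly that $\alpha\mathscr{J}\beta$ if and only if $\gd(\alpha)$ and $\gd(\beta)$ have the same parity, which is the assertion.

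The argument is short, so I expect no serious obstacle. The only points demanding care are the two standard reductions at the outset — that $\mathscr{J}=\mathscr{D}$ since $\AO_n$ is finite, and that $\mathscr{D}=\mathscr{R}\circ\mathscr{L}$, together with the inverse-semigroup descriptions of $\mathscr{R}$ and $\mathscr{L}$ by domains and images — and the recognition that the connecting element $\gamma$ is uniquely determined as an order-preserving partial permutation, so that the whole question collapses onto a single invocation of Proposition~\ref{chAO}.
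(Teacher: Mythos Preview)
Your proof is correct. It differs from the paper's in a clean structural way worth noting.

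The paper works directly from the two-sided ideal definition of $\mathscr{J}$: for the forward direction it takes a factorization $\alpha=\gamma\beta\lambda$ and reads off the parity condition from the bridge element $\gamma$; for the converse it explicitly constructs $\gamma=\transf{\dom(\alpha)\\\dom(\beta)}$ and $\lambda=\transf{\im(\beta)\\\im(\alpha)}$ in $\POI_n$, checks via Proposition~\ref{chAO} that both lie in $\AO_n$, and so obtains $J_\alpha\leqslant_\mathscr{J}J_\beta$ (and symmetrically). You instead invoke $\mathscr{J}=\mathscr{D}=\mathscr{R}\circ\mathscr{L}$ in the finite inverse monoid $\AO_n$, reducing both directions simultaneously to the existence of a \emph{single} connecting element $\gamma$ with $\dom(\gamma)=\dom(\alpha)$ and $\im(\gamma)=\im(\beta)$; uniqueness of order-preserving maps with prescribed domain and image then collapses everything onto one application of Proposition~\ref{chAO}. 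Your route is more economical (one bridge element rather than two factorization elements on each side) and handles both implications at once, at the cost of citing the standard $\mathscr{J}=\mathscr{D}$ fact; the paper's route is more self-contained and closer to the raw definition, which pays off later when the same template is adapted in Proposition~\ref{MrelJ} to the non-aperiodic monoid $\AM_n$.
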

\begin{proof}
First, let us suppose that $\alpha\mathscr{J}\beta$. Take $\gamma,\lambda\in\AO_n$ such that $\alpha=\gamma\beta\lambda$ and assume, 
without loss of generality, that $\gamma$ also has rank $n-1$. Then, $\dom(\alpha)=\dom(\gamma)$ and $\im(\gamma)=\dom(\beta)$,  
whence $\gd(\alpha)=\gd(\gamma)$ and $\gi(\gamma)=\gd(\beta)$.
Since $\gd(\gamma)$ and $\gi(\gamma)$ have the same parity, by Proposition \ref{chAO}, 
we obtain that $\gd(\alpha)$ and $\gd(\beta)$ have also the same parity. 

\smallskip 

On the other hand, suppose that $\gd(\alpha)$ and $\gd(\beta)$ have the same parity. 
Let $\gamma=\transf{\dom(\alpha)\\\dom(\beta)}\in\POI_n$ and $\lambda=\transf{\im(\beta)\\\im(\alpha)}\in\POI_n$. 
Then, clearly, $\alpha=\gamma\beta\lambda$ and both $\gamma$ and $\lambda$ have rank $n-1$. 
On the other hand, $\gd(\gamma)=\gd(\alpha)$, $\gi(\gamma)=\gd(\beta)$, $\gd(\lambda)=\gi(\beta)$, $\gi(\lambda)=\gi(\alpha)$ and, 
as $\gd(\alpha)$, $\gi(\alpha)$, $\gd(\beta)$ and $\gi(\beta)$ have the same parity, 
it follows that $\gd(\gamma),\gi(\gamma),\gd(\lambda)$ and $\gi(\lambda)$ have the same parity, 
whence $\gamma,\lambda\in\AO_n$,
by Proposition \ref{chAO}, 
and so $J_\alpha\leqslant_\mathscr{J}J_\beta$. 
Similarly, we can show that $J_\beta\leqslant_\mathscr{J}J_\alpha$ and thus $\alpha\mathscr{J}\beta$, as required. 
\end{proof} 

This last proposition enables us to conclude that $\AO_n$ has two $\mathscr{J}$-classes of elements of rank $n-1$, namely,
$$
J_{n-1}^\mathscr{o}=\{\alpha\in\AO_n\cap J_{n-1}^{\POI_n}\mid \mbox{$\gd(\alpha)$ is odd}\}
\quad\text{and}\quad 
J_{n-1}^\mathscr{e}=\{\alpha\in\AO_n\cap J_{n-1}^{\POI_n}\mid \mbox{$\gd(\alpha)$ is even}\}. 
$$
Clearly, $|J_{n-1}^\mathscr{o}|=|J_{n-1}^\mathscr{e}|=\frac{1}{4}n^2$, if $n$ is even, and 
$|J_{n-1}^\mathscr{o}|=\frac{(n+1)^2}{4}$ and $|J_{n-1}^\mathscr{e}|=\frac{(n-1)^2}{4}$, if $n$ is odd. 
Moreover, $J_{n-1}^\mathscr{o}$ has $\lceil\frac{n}{2}\rceil$ $\mathscr{L}$-classes (and $\mathscr{R}$-classes) and 
$J_{n-1}^\mathscr{e}$ has $\lfloor\frac{n}{2}\rfloor$  $\mathscr{L}$-classes (and $\mathscr{R}$-classes). 
Furthermore, it is easy to check that the poset of $\mathscr{J}$-classes of $\AO_n$ can be represented by the Hasse diagram of Figure \ref{fAO}. 
\begin{figure}[h] 
\centering
\begin{tikzpicture}[scale=0.5]
\draw (0,0) node{\fbox{$J_n$}} ; 
\draw (-2,-1.5) node{\fbox{$J_{n-1}^\mathscr{o}$}} ; 
\draw (2,-1.5) node{\fbox{$J_{n-1}^\mathscr{e}$}} ; 
\draw (0,-3) node{\fbox{$J_{n-2}$}} ; 
\draw (0,-5) node{\fbox{$J_1$}} ; 
\draw (0,-6.5) node{\fbox{$J_0$}} ; 
\draw[thin] (0,-.55) -- (-1,-0.89);
\draw[thin] (0,-.55) -- (1,-0.89);
\draw[thin] (0,-2.42) -- (1,-2.11);
\draw[thin] (0,-2.42) -- (-1,-2.11);
\draw[thin,dotted] (0,-3.65) -- (0,-4.35);
\draw[thin] (0,-5.55) -- (0,-5.95);
\end{tikzpicture}
\caption{The Hasse diagram of $\AO_n/_\mathscr{J}$.} \label{fAO}
\end{figure}

\section{The structure of $\AM_n$}\label{AM}

Recall that $\PMI_n/\mathscr{J}=\{J_0^{\PMI_n}<_{\mathscr{J}}J_1^{\PMI_n}<_{\mathscr{J}}\cdots<_{\mathscr{J}}J_n^{\PMI_n}\}$, 
where $J_k^{\PMI_n}=J_k^{\I_n}\cap\PMI_n$, for $0\leqslant k\leqslant n$. 
In this case, we have $|J_0^{\PMI_n}|=1$, $|J_1^{\PMI_n}|=n^2$ and $|J_k^{\PMI_n}|=2\binom{n}{k}^2$, for $2\leqslant k\leqslant n$. 
In particular,  $J_0^{\PMI_n}=\{\emptyset\}$ and $J_n^{\PMI_n}=\{\id_n,h\}$, with 
$$
h=\transf{1&2&\cdots&n-1&n\\n&n-1&\cdots&2&1} = \textstyle (1\:n)(2\:n)\cdots(\lfloor\frac{n}{2}\rfloor\:\lfloor\frac{n}{2}\rfloor+1), 
$$
see \cite{Fernandes&Gomes&Jesus:2004}. 

Observe that $h\in\AI_n$ if and only if $\lfloor\frac{n}{2}\rfloor$ is even if and only if $n\equiv 0\mod 4$ or $n\equiv 1\mod 4$. 

Let us abbreviate the expression ``$n\equiv i\mod 4$ or $n\equiv j\mod 4$" by ``$n\equiv \{i,j\}\mod 4$", for $i,j\in\Z$. 

Thus, for $Q_n=J_n^{\PMI_n}\cap\AM_n$ 
(i.e. the group of units of $\AM_n$ and so a $\mathscr{J}$-class of $\AM_n$), we get 
$$
Q_n=\left\{\begin{array}{ll}
\{\id_n,h\} & \mbox{if $n\equiv \{0,1\}\mod 4$}\\
\{\id_n\} & \mbox{if $n\equiv \{2,3\}\mod 4$} 
\end{array}\right. 
$$
and, on the other hand, we have 
$$
\{\alpha\in\AM_n\mid |\im(\alpha)|\leqslant n-2\}=\{\alpha\in\PMI_n\mid |\im(\alpha)|\leqslant n-2\}=J_{n-2}^{\PMI_n}\cup\cdots\cup J_1^{\PMI_n}\cup J_0^{\PMI_n}.
$$
Therefore, just like in $\AO_n$, it remains to characterize the elements of $\AM_n$ with rank $n-1$. 
In fact, as $\POI_n\subseteq\PMI_n$, it follows that $\AO_n\subseteq\AM_n$, whence we only need to characterize the elements
of $\AM_n\setminus\AO_n$ with rank $n-1$. 

First, we prove the following lemma. 

\begin{lemma}\label{ab}
Let $\alpha,\beta\in J_{n-1}^{\I_n}$ be such that $\alpha\beta\in J_{n-1}^{\I_n}$. Then, $\overline{\alpha\beta}=\overline\alpha\overline\beta$. 
\end{lemma}
\begin{proof}
Clearly, $\dom(\alpha\beta)=\dom(\alpha)$, $\im(\alpha)=\dom(\beta)$ and $\im(\alpha\beta)=\im(\beta)$. 
Then, if $x\in\dom(\alpha\beta)$, then $x\overline{\alpha\beta}=x(\alpha\beta)=(x\alpha)\beta=(x\overline\alpha)\overline\beta$. 
On the other hand, $\gd(\alpha) \overline{\alpha\beta} = \gd(\alpha\beta) \overline{\alpha\beta}=\gi(\beta) = \gd(\beta)\overline\beta = \gi(\alpha)\overline\beta=(\gd(\alpha)\overline\alpha)\overline\beta = \gd(\alpha)\overline\alpha\overline\beta$. Hence, $\overline{\alpha\beta}=\overline\alpha\overline\beta$, as required. 
\end{proof}

Next, observe that, if $i_1,i_2,\ldots,i_{n-1}\in\Omega_n$ and $\beta\in\PMI_n\setminus\POI_n$ are such that $i_1<i_2<\cdots<i_{n-1}$ and  
 $\dom(\beta)=\im(\beta)=\{i_1,i_2,\ldots,i_{n-1}\}$, then 
$
\overline\beta =  \textstyle (i_1\:i_{n-1})(i_2\:i_{n-1})\cdots(i_{\lfloor\frac{n-1}{2}\rfloor}\:i_{\lfloor\frac{n-1}{2}\rfloor+1})
$ 
and so $\overline\beta\in \AI_n$ if and only if $\lfloor\frac{n-1}{2}\rfloor$ is even if and only if $n\equiv \{1,2\}\mod 4$. 

\begin{proposition}\label{chAM}
Let $\alpha\in J_{n-1}^{\PMI_n}\setminus J_{n-1}^{\POI_n}$. Then:
\begin{enumerate}
\item For $n\equiv \{0,3\}\mod 4$,  $\alpha\in\AM_n$ if and only if $\gd(\alpha)$ and $\gi(\alpha)$ have distinct parities;
\item For $n\equiv \{1,2\}\mod 4$,  $\alpha\in\AM_n$ if and only if $\gd(\alpha)$ and $\gi(\alpha)$ have the same parity. 
\end{enumerate}
\end{proposition}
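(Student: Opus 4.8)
The plan is to reduce the order-reversing case to the order-preserving case already settled in Proposition~\ref{chAO}, by factoring out a single order-reversal whose parity depends only on $n\bmod 4$. Since $\alpha\in\PMI_n\setminus\POI_n$ is monotone but not order-preserving, it is order-reversing. Write $\dom(\alpha)=\{a_1<\cdots<a_{n-1}\}$ and $\im(\alpha)=\{b_1<\cdots<b_{n-1}\}$, so that $a_k\alpha=b_{n-k}$ for each $k$.

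First I would introduce two auxiliary rank-$(n-1)$ partial permutations: the order-preserving $\gamma$ with $\dom(\gamma)=\dom(\alpha)$ and $\im(\gamma)=\im(\alpha)$ (so $a_k\gamma=b_k$), and the order-reversing $\beta$ with $\dom(\beta)=\im(\beta)=\im(\alpha)$ (so $b_k\beta=b_{n-k}$). A direct check gives $\alpha=\gamma\beta$, and since $\im(\gamma)=\dom(\beta)$ the product again has rank $n-1$, so $\gamma,\beta,\gamma\beta\in J_{n-1}^{\I_n}$. By Lemma~\ref{ab}, completion is multiplicative on these factors, $\overline\alpha=\overline\gamma\,\overline\beta$; hence $\overline\alpha\in\A_n$ if and only if $\overline\gamma$ and $\overline\beta$ have the same parity (both even or both odd).

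Next I would pin down the two parities separately. As $\gamma$ is order-preserving of rank $n-1$ with $\gd(\gamma)=\gd(\alpha)$ and $\gi(\gamma)=\gi(\alpha)$, the computation inside the proof of Proposition~\ref{chAO} shows $\overline\gamma\in\A_n$ if and only if $\gd(\alpha)$ and $\gi(\alpha)$ have the same parity. For $\beta$, since $\dom(\beta)=\im(\beta)=\im(\alpha)$, the observation recorded just before this proposition identifies $\overline\beta$ as the order-reversing involution on an $(n-1)$-element set, a product of $\lfloor\frac{n-1}{2}\rfloor$ transpositions; thus $\overline\beta\in\A_n$ if and only if $\lfloor\frac{n-1}{2}\rfloor$ is even, which happens exactly when $n\equiv\{1,2\}\mod 4$.

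Finally I would combine these. When $n\equiv\{1,2\}\mod 4$, $\overline\beta$ is even, so $\overline\alpha$ is even if and only if $\overline\gamma$ is even, i.e. if and only if $\gd(\alpha)$ and $\gi(\alpha)$ have the same parity, giving item~(2). When $n\equiv\{0,3\}\mod 4$, $\overline\beta$ is odd, so $\overline\alpha$ is even if and only if $\overline\gamma$ is odd, i.e. if and only if $\gd(\alpha)$ and $\gi(\alpha)$ have distinct parities, giving item~(1). The only point requiring care is checking that Lemma~\ref{ab} genuinely applies, that is, that the factorization $\alpha=\gamma\beta$ keeps every factor \emph{and} the product in rank $n-1$; once that is verified the remainder is pure parity bookkeeping, so I do not expect any genuinely hard step.
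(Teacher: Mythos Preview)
Your proposal is correct and follows essentially the same approach as the paper: both arguments compose with the order-reversing involution $\beta$ on $\im(\alpha)$, invoke Lemma~\ref{ab} to split the completion multiplicatively, use the observation preceding the proposition that $\overline\beta$ is a product of $\lfloor\frac{n-1}{2}\rfloor$ transpositions, and then apply Proposition~\ref{chAO} to the resulting order-preserving factor. The only cosmetic difference is that you write $\alpha=\gamma\beta$ and read off $\overline\alpha=\overline\gamma\,\overline\beta$, whereas the paper forms $\alpha\beta\in\POI_n$ (your $\gamma$) and solves $\overline\alpha=\overline{\alpha\beta}\,\overline\beta^{-1}$; since $\beta$ is an involution these are the same computation.
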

\begin{proof}
Let $\beta\in\PMI_n\setminus\POI_n$ be such that $\dom(\beta)=\im(\beta)=\im(\alpha)$. Then, $\alpha\beta\in J_{n-1}^{\I_n}$ and so, 
by Lemma \ref{ab}, $\overline{\alpha\beta}=\overline\alpha\overline\beta$. 
On the other hand, by the above observation, we can write $\overline\beta$ as a product of 
$\lfloor\frac{n-1}{2}\rfloor$ transpositions $\tau_1,\tau_2,\ldots,\tau_{\lfloor\frac{n-1}{2}\rfloor}$ of $\Omega_n$: 
$\overline\beta=\tau_1\tau_2\cdots\tau_{\lfloor\frac{n-1}{2}\rfloor}$. 
Suppose that $\overline{\alpha\beta}$ is a product of $p$ transpositions of $\Omega_n$. 
Then, we get $\overline\alpha=\overline{\alpha\beta}\tau_{\lfloor\frac{n-1}{2}\rfloor}\cdots\tau_2\tau_1$, and so, 
as $\dom(\alpha\beta)=\dom(\alpha)$, $\im(\alpha\beta)=\im(\beta)$ and $\alpha\beta\in\POI_n$, 
by Proposition \ref{chAO}, we have:

If $n\equiv \{1,2\}\mod 4$, then $\lfloor\frac{n-1}{2}\rfloor$ is even, whence $\alpha\in\AM_n$ if and only if $p+\lfloor\frac{n-1}{2}\rfloor$ is even, i.e.  
if and only if $p$ is even, i.e. if and only if $\alpha\beta\in\AO_n$, i.e. if and only if $\gd(\alpha\beta)$ and $\gi(\alpha\beta)$ have the same parity, i.e. 
if and only if $\gd(\alpha)$ and $\gi(\alpha)$ have the same parity; 

If $n\equiv \{0,3\}\mod 4$, then $\lfloor\frac{n-1}{2}\rfloor$ is odd, whence $\alpha\in\AM_n$ if and only if $p+\lfloor\frac{n-1}{2}\rfloor$ is even, i.e.  
if and only if $p$ is odd, i.e. if and only if $\alpha\beta\not\in\AO_n$, i.e. if and only if $\gd(\alpha\beta)$ and $\gi(\alpha\beta)$ have distinct parities, i.e. 
if and only if $\gd(\alpha)$ and $\gi(\alpha)$ have distinct parities. 
\end{proof} 

Let $\alpha\in\PMI_n$ be such that $|\im(\alpha)|\geqslant 2$. 
Denote by $\stackrel{\leftarrow}{\alpha}$ the transformation of $\PMI_n$ such that $\dom(\stackrel{\leftarrow}{\alpha})=\dom(\alpha)$, 
$\im(\stackrel{\leftarrow}{\alpha})=\im(\alpha)$ and $\stackrel{\leftarrow}{\alpha}\,\in\POI_n$ if and only if $\alpha\not\in\POI_n$. 
We say that $\stackrel{\leftarrow}{\alpha}$ is the \textit{reverse transformation} of $\alpha$. 
Then, as an immediate consequence of Propositions \ref{chAO} and \ref{chAM}, we have: 

\begin{corollary}\label{AM12}
Let $n\equiv \{1,2\}\mod 4$ and $\alpha\in J_{n-1}^{\PMI_n}$. Then, $\alpha\in\AM_n$ if and only if ${\stackrel{\leftarrow}{\alpha}} \in\AM_n$. 
\end{corollary}

\smallskip 

As already mentioned, with the same parity, we have $\frac{1}{2}n^2$ pairs, if $n$ is even, and $\frac{n^2+1}{2}$ pairs, if $n$ is odd. 
On the other hand, with distinct parities, we have $\frac{1}{2}n^2$ pairs, if $n$ is even, and $\frac{n^2-1}{2}$ pairs, if $n$ is odd. 
Therefore, 
\begin{equation*}
|\{\alpha\in\AM_n\cap(\PMI_n\setminus\POI_n)\mid |\im(\alpha)|=n-1\}|=\left\{
\begin{array}{ll}
\frac{1}{2}n^2 & \mbox{if $n$ is even}\\
\frac{n^2+1}{2} & \mbox{if $n\equiv 1\mod 4$}\\
\frac{n^2-1}{2} & \mbox{if $n\equiv 3\mod 4$}
\end{array}
\right. 
\end{equation*}
and, since $\{\alpha\in\AM_n\cap\POI_n\mid |\im(\alpha)|=n-1\}=\{\alpha\in\AO_n\mid |\im(\alpha)|=n-1\}$, 
by (\ref{n-1o}), we have 
\begin{equation*}
|\{\alpha\in\AM_n\cap\POI_n\mid |\im(\alpha)|=n-1\}|=\left\{
\begin{array}{ll}
\frac{1}{2}n^2 & \mbox{if $n$ is even}\\
\frac{n^2+1}{2} & \mbox{if $n$ is odd,}
\end{array}
\right. 
\end{equation*}
whence 
\begin{equation*} 
|\{\alpha\in\AM_n\mid |\im(\alpha)|=n-1\}|=\left\{
\begin{array}{ll}
n^2 & \mbox{if $n$ is even or $n\equiv 3\mod 4$}\\
n^2+1 & \mbox{if $n\equiv 1\mod 4$.}
\end{array}
\right. 
\end{equation*}
Hence, as $|J_{n-1}^{\PMI_n}|=2n^2$, $|\PMI_n|=2\binom{2n}{n}-n^2-1$ 
and $h\not\in\AM_n$ for $n\equiv \{2,3\}\mod 4$, 
we obtain the following result.

\begin{proposition}\label{carAM}
$|\AM_n|=2\binom{2n}{n}-2n^2-\left\{
\begin{array}{ll}
1& \mbox{if $n\equiv 0\mod 4$}\\
0 & \mbox{if $n\equiv 1\mod 4$}\\
2 & \mbox{if $n\equiv \{2,3\}\mod 4$.}
\end{array}
\right. $ 
\end{proposition}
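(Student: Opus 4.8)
The plan is to compute $|\AM_n|$ by summing the cardinalities of its $\mathscr{J}$-classes, grouped by rank into three strata: the group of units $Q_n$ (rank $n$), the elements of rank $n-1$, and the elements of rank at most $n-2$. All three cardinalities have already been determined in the preceding material, so the proof reduces to an accounting exercise together with a careful case analysis modulo $4$.

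First I would handle the stratum of rank at most $n-2$. Since we have shown $\{\alpha\in\AM_n\mid |\im(\alpha)|\leqslant n-2\}=\{\alpha\in\PMI_n\mid |\im(\alpha)|\leqslant n-2\}$, its size is $|\PMI_n|-|J_n^{\PMI_n}|-|J_{n-1}^{\PMI_n}|$. Using $|\PMI_n|=2\binom{2n}{n}-n^2-1$, $|J_n^{\PMI_n}|=2$ and $|J_{n-1}^{\PMI_n}|=2n^2$, this contributes $2\binom{2n}{n}-3n^2-3$, independently of the residue of $n$ modulo $4$.

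Next I would add the contribution of the group of units, which is $|Q_n|=2$ when $n\equiv\{0,1\}\mod 4$ and $|Q_n|=1$ when $n\equiv\{2,3\}\mod 4$, together with the contribution of the rank-$(n-1)$ stratum, which was computed to be $n^2$ when $n$ is even or $n\equiv 3\mod 4$ and $n^2+1$ when $n\equiv 1\mod 4$. Summing the three strata and simplifying in each residue class should yield $2\binom{2n}{n}-2n^2$ diminished by $1$, $0$, $2$, $2$ according as $n\equiv 0,1,2,3\mod 4$, which is exactly the claimed expression.

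The only point requiring care — and the main, mild, obstacle — is that the two variable contributions are grouped modulo $4$ in slightly incompatible ways: $|Q_n|$ splits as $\{0,1\}$ versus $\{2,3\}$, whereas the rank-$(n-1)$ count splits as ``$n$ even or $n\equiv 3$'' versus ``$n\equiv 1$''. I would therefore carry out the addition separately in each of the four classes $n\equiv 0,1,2,3\mod 4$, superimposing the two groupings correctly, before reading off the final formula.
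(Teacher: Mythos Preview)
Your proposal is correct and follows exactly the approach taken in the paper: the proposition is stated immediately after the displayed computation of $|\{\alpha\in\AM_n\mid |\im(\alpha)|=n-1\}|$ together with the remark that $|J_{n-1}^{\PMI_n}|=2n^2$ and $|\PMI_n|=2\binom{2n}{n}-n^2-1$, so the result is obtained precisely by the three-stratum accounting you describe. Your arithmetic in each residue class modulo $4$ is also correct.
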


\smallskip 

For $0\leqslant k\leqslant n-2$, let us denote $J_k^{\PMI_n}$ by $Q_k$. Then, as for $\AO_n$, 
it is easy to conclude that $Q_0,Q_1,\ldots,Q_{n-2}$ are also $\mathscr{J}$-classes of $\AM_n$ and 
$Q_0<_\mathscr{J}Q_1<_\mathscr{J}\cdots<_\mathscr{J}Q_{n-2}<_\mathscr{J}Q_n$  in $\AM_n$. 
With respect to the elements of $\AM_n$ with rank $n-1$, we have the following result. 

\begin{proposition}\label{MrelJ}
Let $\alpha,\beta\in\AM_n$ be two elements of rank $n-1$. Then:
\begin{enumerate}
\item For $n\equiv \{0,3\}\mod 4$,  $\alpha\mathscr{J}\beta$; 
\item For $n\equiv \{1,2\}\mod 4$,  $\alpha\mathscr{J}\beta$ if and only if $\gd(\alpha)$ and $\gd(\beta)$ have the same parity. 
\end{enumerate}
\end{proposition}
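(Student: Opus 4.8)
The plan is to follow the template of Proposition~\ref{relJ}, but now exploiting the order-reversing partial permutations available in $\AM_n$, which toggle monotonicity. The organising principle is that a composite of monotone partial permutations is order-preserving precisely when an even number of its factors are order-reversing; together with the uniqueness of the monotone partial permutation with a prescribed domain and image (recorded in the Preliminaries), this means that to realise $\alpha=\gamma\beta\lambda$ it suffices to match $\dom(\alpha)$, $\im(\alpha)$ and the total parity of monotonicities. For a rank $n-1$ element $\mu\in\AM_n$, I would write $d(\mu)\in\{0,1\}$ for its monotonicity ($0$ if order-preserving, $1$ if order-reversing) and $p(\mu)\equiv\gd(\mu)+\gi(\mu)\mod 2$. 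Then Propositions~\ref{chAO} and \ref{chAM} read: $\mu\in\AM_n$ iff $d(\mu)\equiv p(\mu)\mod 2$ when $n\equiv\{0,3\}\mod 4$, and iff $p(\mu)=0$ (with $d(\mu)$ unconstrained) when $n\equiv\{1,2\}\mod 4$; the latter is exactly Corollary~\ref{AM12}.

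For part~1 ($n\equiv\{0,3\}\mod 4$) I would set $\gamma=\transf{\dom(\alpha)\\\dom(\beta)}$ and $\lambda=\transf{\im(\beta)\\\im(\alpha)}$, choosing $d(\gamma)=p(\gamma)$ and $d(\lambda)=p(\lambda)$ so that $\gamma,\lambda\in\AM_n$ have rank $n-1$. Since $\gd(\gamma)=\gd(\alpha)$, $\gi(\gamma)=\gd(\beta)$, $\gd(\lambda)=\gi(\beta)$, $\gi(\lambda)=\gi(\alpha)$, and $d(\alpha)=p(\alpha)$, $d(\beta)=p(\beta)$, a one-line parity count gives $d(\gamma)+d(\beta)+d(\lambda)\equiv(\gd(\alpha)+\gd(\beta))+(\gd(\beta)+\gi(\beta))+(\gi(\beta)+\gi(\alpha))\equiv\gd(\alpha)+\gi(\alpha)\equiv d(\alpha)\mod 2$. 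Hence $\gamma\beta\lambda$ is the unique monotone map with domain $\dom(\alpha)$, image $\im(\alpha)$ and monotonicity $d(\alpha)$, i.e. $\gamma\beta\lambda=\alpha$; this yields $J_\alpha\leqslant_\mathscr{J}J_\beta$, and swapping roles gives $\alpha\mathscr{J}\beta$.

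For part~2 ($n\equiv\{1,2\}\mod 4$) the ``if'' direction is similar but easier: assuming $\gd(\alpha)\equiv\gd(\beta)$ and using $p(\alpha)=p(\beta)=0$ forces all four gaps $\gd(\alpha),\gi(\alpha),\gd(\beta),\gi(\beta)$ to share one parity, so the same $\gamma,\lambda$ satisfy $p(\gamma)=p(\lambda)=0$ and lie in $\AM_n$ for \emph{any} monotonicity; since $d(\gamma),d(\lambda)$ are now free, I pick them so that $d(\gamma)+d(\beta)+d(\lambda)\equiv d(\alpha)$, again obtaining $\gamma\beta\lambda=\alpha$. For ``only if'', suppose $\alpha=\gamma\beta\lambda$ with $\gamma,\lambda\in\AM_n$. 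Because ranks cannot increase, $\delta=\gamma\beta$ lies in $\AM_n$ with rank $n-1$, and $\alpha=\delta\lambda$ forces $\dom(\alpha)=\dom(\delta)$, so $\gd(\alpha)=\gd(\delta)$; it then remains to check $\gd(\delta)\equiv\gd(\beta)\mod 2$. If $\gamma$ has rank $n-1$, then $\gd(\delta)=\gd(\gamma)$ and $\gi(\gamma)=\gd(\beta)$, while $\gd(\gamma)\equiv\gi(\gamma)$ since $\gamma\in\AM_n$; if $\gamma$ is a unit it is $\id_n$ or $h$, and $\gd(\delta)=\gd(\beta)\gamma$ (as $\id_n,h$ are involutions), where $h$ preserves parity of gaps because $ih=n+1-i\equiv i\mod 2$ for $n$ odd. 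Either way $\gd(\alpha)\equiv\gd(\beta)\mod 2$.

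The step I expect to be the main obstacle is the ``only if'' half of part~2, specifically the nontrivial unit $h$ present when $n\equiv1\mod 4$: the argument only closes once one observes that left-multiplication by $h$ fixes the parity of the domain-gap. The remainder is careful parity bookkeeping, whose essential input is the uniqueness of the monotone map with given domain and image, turning the problem of solving $\alpha=\gamma\beta\lambda$ into the three matching conditions on domains, images, and the parity of the number of order-reversing factors.
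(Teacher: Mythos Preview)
Your argument is correct and is organised more cleanly than the paper's own proof. The paper establishes part~1 by a four-way case split according to whether each of $\alpha,\beta$ lies in $\AO_n$ or in $\AM_n\setminus\AO_n$, with further subcases on the gap parities, and in each case exhibits explicit $\gamma,\lambda$ (and $\gamma',\lambda'$) chosen in $\POI_n$ or in $\PMI_n\setminus\POI_n$ as appropriate; for part~2 it simply refers back to the proof of Proposition~\ref{relJ}, with a remark that when $\alpha,\beta$ are of opposite monotonicity one of $\gamma,\lambda$ must be taken order-reversing. Your encoding $d(\mu)\in\{0,1\}$, $p(\mu)\equiv\gd(\mu)+\gi(\mu)\pmod 2$ turns Propositions~\ref{chAO} and \ref{chAM} into the single criterion ``$d(\mu)\equiv p(\mu)$ for $n\equiv\{0,3\}$, $p(\mu)=0$ for $n\equiv\{1,2\}$'', and this collapses the paper's casework into one parity identity, after which the uniqueness of a monotone map with prescribed domain, image and monotonicity finishes the construction. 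What you gain is brevity and a uniform mechanism; what the paper's approach gains is that each $\gamma,\lambda$ is written down explicitly, which some readers may find more transparent.

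One point worth noting: in the ``only if'' direction of part~2, the paper's reference to Proposition~\ref{relJ} tacitly assumes one may take $\gamma$ of rank $n-1$ without loss of generality; you instead handle the unit case directly and make explicit the one nontrivial observation needed, namely that for $n\equiv 1\pmod 4$ left-multiplication by $h$ sends $\gd(\beta)$ to $n+1-\gd(\beta)\equiv\gd(\beta)\pmod 2$. That is the genuine content of this half, and your treatment of it is cleaner than what the paper records.
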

\begin{proof}
1. First, let us  suppose that $n\equiv \{0,3\}\mod 4$. 

Let $\gamma=\transf{\dom(\alpha)\\\dom(\beta)}, \lambda=\transf{\im(\beta)\\\im(\alpha)}, 
\gamma'=\transf{\dom(\beta)\\\dom(\alpha)},\lambda'=\transf{\im(\alpha)\\\im(\beta)}\in\I_n$. 
Then, $\gamma$, $\lambda$, $\gamma'$ and $\lambda'$ have rank $n-1$ with  
$\gd(\gamma)=\gd(\alpha)$, $\gi(\gamma)=\gd(\beta)$, $\gd(\lambda)=\gi(\beta)$, $\gi(\lambda)=\gi(\alpha)$, 
$\gd(\gamma')=\gd(\beta)$, $\gi(\gamma')=\gd(\alpha)$, $\gd(\lambda')=\gi(\alpha)$ and $\gi(\lambda')=\gi(\beta)$. 
Below, the transformations  $\gamma$, $\lambda$, $\gamma'$ and $\lambda'$ will be taken well-determined in $\POI_n$ or in $\PMI_n\setminus\POI_n$, depending on the different cases for $\alpha$ and $\beta$ that we are going to consider. 

\smallskip 

\noindent{\sc case} $\alpha,\beta\in\AO_n$ with $(-1)^{\gd(\alpha)}=(-1)^{\gd(\beta)}$, i.e $\gd(\alpha)$ and $\gd(\beta)$ have the same parity.  

In this case, by Proposition \ref{relJ}, we have $\alpha\mathscr{J}\beta$ in $\AO_n$, whence $\alpha\mathscr{J}\beta$ in $\AM_n$. 

\smallskip 

\noindent{\sc case} $\alpha,\beta\in\AO_n$ with $(-1)^{\gd(\alpha)}\neq(-1)^{\gd(\beta)}$, i.e $\gd(\alpha)$ and $\gd(\beta)$ have distinct parities.  

Take $\gamma,\lambda\in\PMI_n\setminus\POI_n$.  
Then, clearly, $\alpha=\gamma\beta\lambda$ and, 
since $(-1)^{\gi(\alpha)}=(-1)^{\gd(\alpha)}\neq(-1)^{\gd(\beta)}=(-1)^{\gi(\beta)}$, it follows that 
$(-1)^{\gd(\gamma)}\neq(-1)^{\gi(\gamma)}$ and $(-1)^{\gd(\lambda)}\neq(-1)^{\gi(\lambda)}$, 
whence $\gamma,\lambda\in\AM_n$, by Proposition \ref{chAM}, and so $J_\alpha\leqslant_\mathscr{J}J_\beta$. 
Similarly, we can show that $J_\beta\leqslant_\mathscr{J}J_\alpha$ and thus $\alpha\mathscr{J}\beta$. 

\smallskip 

\noindent{\sc case} $\alpha\in\AO_n$ and $\beta\in\AM_n\setminus\AO_n$.  

Then, we have $(-1)^{\gd(\alpha)}=(-1)^{\gi(\alpha)}$ and  $(-1)^{\gd(\beta)}\neq(-1)^{\gi(\beta)}$.  
If $(-1)^{\gd(\alpha)}=(-1)^{\gi(\alpha)}=(-1)^{\gd(\beta)}$, then take $\gamma,\gamma'\in\POI_n$ and $\lambda,\lambda'\in\PMI_n\setminus\POI_n$. 
On the other hand, if $(-1)^{\gd(\alpha)}=(-1)^{\gi(\alpha)}=(-1)^{\gi(\beta)}$, then take $\gamma,\gamma'\in\PMI_n\setminus\POI_n$ and $\lambda,\lambda'\in\POI_n$. 
Thus, in both cases, it is a routine matter to show that $\alpha=\gamma\beta\lambda$, $\beta=\gamma'\alpha\lambda'$ and $\gamma,\lambda,\gamma',\lambda'\in\AM_n$, 
whence $\alpha\mathscr{J}\beta$. 

\smallskip 

\noindent{\sc case} $\alpha,\beta\in\AM_n\setminus\AO_n$.  

Then, we have $(-1)^{\gd(\alpha)}\neq(-1)^{\gi(\alpha)}$ and  $(-1)^{\gd(\beta)}\neq(-1)^{\gi(\beta)}$.  
If $(-1)^{\gd(\alpha)}=(-1)^{\gd(\beta)}$, then $(-1)^{\gi(\alpha)}=(-1)^{\gi(\beta)}$ and so take $\gamma,\lambda,\gamma',\lambda'\in\POI_n$.  
On the other hand, if $(-1)^{\gd(\alpha)}\neq(-1)^{\gd(\beta)}$, then $(-1)^{\gi(\alpha)}\neq(-1)^{\gi(\beta)}$ and, in this case, 
take $\gamma,\lambda,\gamma',\lambda'\in\PMI_n\setminus\POI_n$.  
Thus, in both cases, it is easy to show that $\alpha=\gamma\beta\lambda$, $\beta=\gamma'\alpha\lambda'$ and $\gamma,\lambda,\gamma',\lambda'\in\AM_n$, 
whence $\alpha\mathscr{J}\beta$. 

\smallskip 

2. Now, suppose that $n\equiv \{1,2\}\mod 4$.   

Let us first admit that $\alpha\mathscr{J}\beta$. Take $\gamma,\lambda\in\AM_n$ such that $\alpha=\gamma\beta\lambda$ and assume, 
without loss of generality, that $\gamma$ has also rank $n-1$. Then, $\dom(\alpha)=\dom(\gamma)$ and $\im(\gamma)=\dom(\beta)$,  
whence $\gd(\alpha)=\gd(\gamma)$ and $\gi(\gamma)=\gd(\beta)$.
Since $\gd(\gamma)$ and $\gi(\gamma)$ have the same parity, by Propositions \ref{chAO} and \ref{chAM}(2), 
we get that $\gd(\alpha)$ and $\gd(\beta)$ have also the same parity. 

\smallskip 

On the other hand, suppose that $\gd(\alpha)$ and $\gd(\beta)$ have the same parity. 
Let $\gamma=\transf{\dom(\alpha)\\\dom(\beta)}\in\POI_n$.  
If both $\alpha$ and $\beta$ belong to $\AO_n$ or both belong to $\AM_n\setminus\AO_n$, then let $\lambda=\transf{\im(\beta)\\\im(\alpha)}\in\POI_n$. 
Otherwise, let $\lambda=\transf{\im(\beta)\\\im(\alpha)}\in\PMI_n\setminus\POI_n$. 
Then, clearly, $\alpha=\gamma\beta\lambda$ and both $\gamma$ and $\lambda$ have rank $n-1$. 
On the other hand, $\gd(\gamma)=\gd(\alpha)$, $\gi(\gamma)=\gd(\beta)$, $\gd(\lambda)=\gi(\beta)$, $\gi(\lambda)=\gi(\alpha)$ and, 
as $\gd(\alpha)$, $\gi(\alpha)$, $\gd(\beta)$ and $\gi(\beta)$ have the same parity, 
it follows that $\gd(\gamma),\gi(\gamma),\gd(\lambda)$ and $\gi(\lambda)$ have the same parity, 
whence $\gamma,\lambda\in\AM_n$,
by Propositions \ref{chAO} and \ref{chAM}(2), 
and so $J_\alpha\leqslant_\mathscr{J}J_\beta$. 
Similarly, we can show that $J_\beta\leqslant_\mathscr{J}J_\alpha$ and thus $\alpha\mathscr{J}\beta$, as required. 
\end{proof} 

Let $n\equiv \{0,3\}\mod 4$. Then, by the previous proposition, 
$$
Q_{n-1}=\{\alpha\in\AM_n\mid |\im(\alpha)|=n-1\}
$$
is a $\mathscr{J}$-class of $\AM_n$. Moreover, $|Q_{n-1}|=n^2$ and 
it is not difficult to deduce that $Q_{n-1}$ has $n$ $\mathscr{L}$-classes (and $\mathscr{R}$-classes), 
whence $Q_{n-1}$  has $n^2$ $\mathscr{H}$-classes,  which must therefore be trivial. 
Furthermore, we get $Q_0<_\mathscr{J}Q_1<_\mathscr{J}\cdots<_\mathscr{J}Q_{n-2}<_\mathscr{J}Q_{n-1}<_\mathscr{J}Q_n$ in $\AM_n$. 

\smallskip 

On the other hand, let $n\equiv \{1,2\}\mod 4$. In this case, Proposition \ref{MrelJ} 
allows us to conclude that $\AM_n$ has two $\mathscr{J}$-classes of elements of rank $n-1$, namely,
$$
Q_{n-1}^\mathscr{o}=\{\alpha\in\AM_n\cap J_{n-1}^{\PMI_n}\mid \mbox{$\gd(\alpha)$ is odd}\}
\quad\text{and}\quad 
Q_{n-1}^\mathscr{e}=\{\alpha\in\AM_n\cap J_{n-1}^{\PMI_n}\mid \mbox{$\gd(\alpha)$ is even}\}. 
$$
Moreover, $|Q_{n-1}^\mathscr{o}|=|Q_{n-1}^\mathscr{e}|=\frac{1}{2}n^2$, if $n\equiv 2\mod 4$, 
$|Q_{n-1}^\mathscr{o}|=\frac{(n+1)^2}{2}$ and $|Q_{n-1}^\mathscr{e}|=\frac{(n-1)^2}{2}$, if $n\equiv 1\mod 4$,  
and it is not difficult to conclude that $Q_{n-1}^\mathscr{o}$ has $\lceil\frac{n}{2}\rceil$ $\mathscr{L}$-classes (and $\mathscr{R}$-classes),  
$Q_{n-1}^\mathscr{e}$ has $\lfloor\frac{n}{2}\rfloor$  $\mathscr{L}$-classes (and $\mathscr{R}$-classes) and both 
$\mathscr{J}$-classes have $\mathscr{H}$-classes with two elements.  
In addition, it is also easy to check that the poset of $\mathscr{J}$-classes of $\AM_n$ can be represented by the Hasse diagram of Figure \ref{fAM}. 
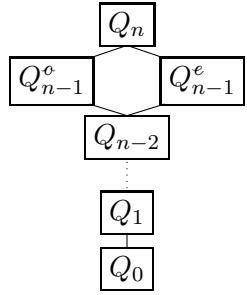
\begin{figure}[h] 
\centering
\begin{tikzpicture}[scale=0.5]
\draw (0,0) node{\fbox{$Q_n$}} ; 
\draw (-2,-1.5) node{\fbox{$Q_{n-1}^\mathscr{o}$}} ; 
\draw (2,-1.5) node{\fbox{$Q_{n-1}^\mathscr{e}$}} ; 
\draw (0,-3) node{\fbox{$Q_{n-2}$}} ; 
\draw (0,-5) node{\fbox{$Q_1$}} ; 
\draw (0,-6.5) node{\fbox{$Q_0$}} ; 
\draw[thin] (0,-.55) -- (-1,-0.89);
\draw[thin] (0,-.55) -- (1,-0.89);
\draw[thin] (0,-2.42) -- (1,-2.11);
\draw[thin] (0,-2.42) -- (-1,-2.11);
\draw[thin,dotted] (0,-3.65) -- (0,-4.35);
\draw[thin] (0,-5.55) -- (0,-5.95);
\end{tikzpicture}
\caption{The Hasse diagram of $\AM_n/_\mathscr{J}$, for $n\equiv \{1,2\}\mod 4$.} \label{fAM}
\end{figure}

\section{The congruences}\label{con}

Let $M$ be a finite $\mathscr{H}$-trivial (\textit{aperiodic}) inverse monoid. Then, $M$ has zero, which we denote by $0$. 
Observe that, $\{0\}$ and $M$ are ideals of $M$, and we have $\rees{\{0\}}=\id$ and $\rees{M}=\omega$. 

In order to describe the congruences of the monoid $\AO_n$, we begin by proving the following lemma.

\begin{lemma}\label{rfund}
Let $M$ be a finite $\mathscr{H}$-trivial inverse monoid. 
Let $\rho$ be a congruence of $M$ such that, for any $e\in E(M)$, if there exists $f\in E(M)$ such that $f<e$ and $e\rho f$, then $e\rho0$. 
Then, $\rho$ is a Rees congruence of $M$. 
\end{lemma}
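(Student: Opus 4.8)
The plan is to identify the ideal that witnesses $\rho$ as a Rees congruence and then show the two-sided inclusion. The natural candidate is $I = \{a\in M\mid a\,\rho\,0\}$, the $\rho$-class of $0$. Since $0$ is the zero of $M$ and $\rho$ is a congruence, $I$ is an ideal: if $a\,\rho\,0$ then $xay\,\rho\,x0y=0$ for all $x,y\in M$, so $MIM\subseteq I$. The inclusion $\rees{I}\subseteq\rho$ is immediate from the definitions, since $a\rees{I}b$ means $a=b$ (whence $a\,\rho\,b$) or $a,b\in I$ (whence $a\,\rho\,0\,\rho\,b$). Thus the whole content lies in proving $\rho\subseteq\rees{I}$, which amounts to: if $a\,\rho\,b$ and $a\neq b$, then $a\,\rho\,0$ (and then $b\,\rho\,0$ follows automatically). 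I would argue by contradiction, assuming $a\not\rho\,0$.

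The key device is to manufacture, from the relation $a\,\rho\,b$, a pair of idempotents comparable in the natural order and $\rho$-related, so as to trigger the hypothesis. Set $e=aa^{-1}$ and $f=bb^{-1}$. Right-multiplying $a\,\rho\,b$ by $a^{-1}$ gives $e\,\rho\,ba^{-1}$, and then left-multiplying by $f$ gives $fe\,\rho\,bb^{-1}ba^{-1}=ba^{-1}\,\rho\,e$; since idempotents of an inverse semigroup commute, $ef=fe$ and $ef\leqslant e$ in the natural order, while $ef\,\rho\,e$. If $ef\neq e$, then $ef<e$ with $ef\,\rho\,e$, so the hypothesis yields $e\,\rho\,0$, and then $a=ea\,\rho\,0a=0$, contradicting $a\not\rho\,0$. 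Hence we must have $ef=e$, i.e. $e\leqslant f$. The symmetric computation (right-multiply $b\,\rho\,a$ by $b^{-1}$, then left-multiply by $e$) gives $ef\,\rho\,f$ with $ef\leqslant f$; here $ef\neq f$ would force $f\,\rho\,0$, hence $b\,\rho\,0$ and, since $a\,\rho\,b$, also $a\,\rho\,0$, again a contradiction. Therefore $ef=f$, i.e. $f\leqslant e$, and combined with $e\leqslant f$ this yields $aa^{-1}=e=f=bb^{-1}$, that is $a\,\mathscr{R}\,b$.

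To close, I would run the entire previous argument on the pair $a^{-1}\,\rho\,b^{-1}$, which holds because congruences on inverse semigroups preserve inverses (the natural morphism $M\to M/\rho$ is a homomorphism of inverse semigroups, cf.\ \cite{Howie:1995}); note also that $a^{-1}\not\rho\,0$, since $a^{-1}\,\rho\,0$ would give $aa^{-1}\,\rho\,0$ and hence $a\,\rho\,0$. Applying the idempotent argument to $a^{-1},b^{-1}$ produces $a^{-1}a=b^{-1}b$, that is $a\,\mathscr{L}\,b$. Then $a\,\mathscr{H}\,b$, and $\mathscr{H}$-triviality of $M$ forces $a=b$, contradicting $a\neq b$. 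This contradiction shows $a\,\rho\,0$, completing $\rho\subseteq\rees{I}$ and hence $\rho=\rees{I}$. I expect the main obstacle to be purely organizational rather than computational: namely setting up the two mirror-image idempotent computations cleanly and verifying, via the commuting of idempotents, both that $ef\leqslant e,f$ and that $ef\,\rho\,e$, $ef\,\rho\,f$, so that the hypothesis applies in exactly the right place; once that bookkeeping is in order, the $\mathscr{H}$-triviality step delivers the contradiction at once.
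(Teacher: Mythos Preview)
Your proof is correct and follows essentially the same strategy as the paper's: both reduce to showing that any non-singleton $\rho$-class contains $0$, by passing to the idempotents $aa^{-1},bb^{-1}$ (and $a^{-1}a,b^{-1}b$), invoking the hypothesis on a strict pair of $\rho$-related idempotents, and finishing with $\mathscr{H}$-triviality. The only notable difference is organizational: you take $I$ to be the $\rho$-class of $0$ directly, whereas the paper builds $I$ from the $\leqslant_\mathscr{J}$-maximal $\mathscr{J}$-classes meeting a non-trivial $\rho$-class; your choice is a bit cleaner but yields the same ideal.
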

\begin{proof}
We begin this proof by showing that, for $a\in M$, 
\begin{equation}\label{rfund1}
\mbox{$|a\rho|>1$ implies $a\rho0$.}  
\end{equation} 
Let $a\in M$ be such that there exists $b\in a\rho\setminus\{a\}$.  

First, suppose that $a,b\in E(M)$. Then, $a=a^2\rho ab$ and $ab\in E(M)$. Since $ab\leqslant a$, if $ab\neq a$, then $a\rho0$, by hypothesis; otherwise $b<a$ and, again by hypothesis, we get $a\rho0$. 

Next, we consider the general case. Since $a\rho b$ (and $M$ is an inverse monoid), then $a^{-1}\rho b^{-1}$, whence $aa^{-1}\rho bb^{-1}$ and $a^{-1}a\rho b^{-1}b$. 
If $aa^{-1} = bb^{-1}$ and $a^{-1}a = b^{-1}b$, then $a\mathscr{H}b$ and so, by hypothesis, $a=b$, which is a contradiction. Hence, 
$aa^{-1} \neq bb^{-1}$ or $a^{-1}a \neq b^{-1}b$. Therefore, by the idempotent case, we get $aa^{-1}\rho0$ or $a^{-1}a\rho0$ and so   
$a=aa^{-1}a\rho0a=0$ or $a=aa^{-1}a\rho a0=0$, whence $a\rho0$. Thus, we proved (\ref{rfund1}).

\smallskip 

Now, let $X_\rho=\left\{J\in M/\mathscr{J}\mid \mbox{$|a\rho|>1$, for some $a\in J$}\right\}$. 
If $X_\rho=\emptyset$, then $\rho=\id$ and so $\rho$ is the Rees congruence associated to the \textit{null ideal} $\{0\}$ of $M$. 
So, suppose that $X_\rho\neq\emptyset$. 

Let $J_{a_1},\ldots,J_{a_k}$ be (all) the $\leqslant_\mathscr{J}$-maximal elements of $X_\rho$, where $a_1,\ldots,a_k\in M$ are such that $|a_i\rho|>1$, for $i=1,\ldots,k$. 
Let $I=\left\{a\in M\mid\mbox{$J_a\leqslant_\mathscr{J}J_{a_i}$, for some $i=1,\ldots,k$}\right\}$. 
It is easy to show that $I$ is an ideal of $M$. 

Let $a\in I$. Then, $J_a\leqslant_\mathscr{J}J_{a_i}$, for some $i=1,\ldots,k$, and so $a=ua_iv$, for some $u,v\in M$. Since $|a_i\rho|>1$, by (\ref{rfund1}), we have $a_i\rho0$, 
whence $a=ua_iv\rho u0v=0$. 
Next, let $a\in M\setminus I$. If $|a\rho|>1$, then $J_a\in X_\rho$ and so $J_a\leqslant_\mathscr{J}J_{a_i}$, for some $i=1,\ldots,k$, 
since $J_{a_1},\ldots,J_{a_k}$ are the $\leqslant_\mathscr{J}$-maximal elements of $X_\rho$, whence $a\in I$, which is a contradiction. 
Hence, $|a\rho|=1$. 
Thus, $\rho$ is the Rees congruence associated to the ideal $I$ of $M$, as required. 
\end{proof}

For a nonempty subset $X$ of $\Omega_n$, let us denote by $\id_X$ the partial identity $\id_n|_X$. 
Before our next lemma, observe that $E(\AO_n)=E(\I_n)=\mathcal{E}_n=\{\id_X\mid X\subseteq\Omega_n\}$ and, 
given $\varepsilon,\zeta\in\mathcal{E}_n$,  we have $\zeta\leqslant\varepsilon$ if and only if $\dom(\zeta)\subseteq\dom(\varepsilon)$. 

\begin{lemma}\label{erho0}
Let $\rho$ be a congruence of $\AO_n$ and let $\varepsilon\in E(\AO_n)$. If there exists $\zeta\in E(\AO_n)$ such that $\zeta<\varepsilon$ and $\varepsilon\rho\zeta$, 
then $\varepsilon\rho\emptyset$. 
\end{lemma}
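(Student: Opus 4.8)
The plan is to reduce to the case where $\zeta$ is covered by $\varepsilon$ in the semilattice and then induct on the rank. Write $\varepsilon=\id_X$ and $\zeta=\id_Y$ with $Y\subsetneq X$, and fix $a\in X\setminus Y$. Since $Y\subseteq X\setminus\{a\}\subseteq X$, multiplying the relation $\varepsilon\rho\zeta$ on both sides by $\id_{X\setminus\{a\}}$ gives $\id_{X\setminus\{a\}}\rho\id_Y=\zeta\rho\varepsilon$, so $\id_X\rho\id_{X\setminus\{a\}}$. Thus it suffices to treat the case $\zeta=\id_{X\setminus\{a\}}$, and I would argue by induction on $|X|$. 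If $|X|=1$ then $\id_{X\setminus\{a\}}=\emptyset$ and we are done immediately.

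For the inductive step assume $|X|=k\geqslant 2$. First, for each $b\in X\setminus\{a\}$, multiplying $\id_X\rho\id_{X\setminus\{a\}}$ on both sides by $\id_{X\setminus\{b\}}$ yields $\id_{X\setminus\{b\}}\rho\id_{X\setminus\{a,b\}}$, a relation between an idempotent of rank $k-1$ and a strictly smaller one; by the induction hypothesis $\id_{X\setminus\{b\}}\rho\emptyset$. It then remains only to move the ``hole'' from $b$ back to $a$. Let $\gamma\in\AO_n$ be the order-isomorphism with $\dom(\gamma)=X\setminus\{b\}$ and $\im(\gamma)=X\setminus\{a\}$. Since $\dom(\gamma)=X\setminus\{b\}$ we have $\gamma^{-1}\id_{X\setminus\{b\}}\gamma=\id_{\im(\gamma)}=\id_{X\setminus\{a\}}$, while $\gamma^{-1}\emptyset\gamma=\emptyset$; applying $\rho$ to $\id_{X\setminus\{b\}}\rho\emptyset$ therefore gives $\id_{X\setminus\{a\}}\rho\emptyset$, and combined with $\id_X\rho\id_{X\setminus\{a\}}$ we conclude $\varepsilon=\id_X\rho\emptyset$.

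The step I expect to be delicate is guaranteeing $\gamma\in\AO_n$. Its rank is $k-1$, so when $k\leqslant n-1$ the map $\gamma$ has rank at most $n-2$ and hence automatically lies in $\AO_n$ (every order-preserving partial permutation of rank $\leqslant n-2$ belongs to $\AO_n$), and the induction runs without friction. The genuine obstacle is the top $\mathscr{J}$-class $|X|=n$ (so $\varepsilon=\id_n$): here $\gamma$ has rank $n-1$ with $\gd(\gamma)=b$ and $\gi(\gamma)=a$, and by Proposition \ref{chAO} it belongs to $\AO_n$ only when $a\equiv b\mod 2$. The hole-moving step must therefore be performed with a second deletion point $b$ chosen of the same parity as $a$, which is available precisely when the parity class of $a$ has at least two elements; the full-rank case (together with the correspondingly rigid, trivial group of units, which rules out any nontrivial conjugation of $\id_n$ itself) is where the parity constraint of Proposition \ref{chAO} really has to be confronted, and I would expect the smallest values of $n$ to require separate inspection.
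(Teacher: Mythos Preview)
Your argument is correct and rests on the same device as the paper's, though organised differently. The paper runs a minimal-rank descent: it takes $\gamma\in E(\AO_n)$ of minimum rank with $\gamma\leqslant\varepsilon$ and $\gamma\rho\varepsilon$, picks $x\in\dom(\varepsilon)\setminus\dom(\gamma)$ and $y\in\dom(\gamma)$, and conjugates by the order-isomorphism $\alpha\colon\dom(\gamma)\to(\dom(\gamma)\setminus\{y\})\cup\{x\}$ to obtain $\gamma=\alpha\varepsilon\alpha^{-1}\,\rho\,\alpha\gamma\alpha^{-1}$, an idempotent strictly below $\gamma$ and still $\leqslant\varepsilon$, contradicting minimality. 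You instead induct on $|X|$: you first drop rank by \emph{restriction} (multiplying by $\id_{X\setminus\{b\}}$), invoke the hypothesis to get $\id_{X\setminus\{b\}}\rho\emptyset$, and only then conjugate to transport this back to $\id_{X\setminus\{a\}}\rho\emptyset$. So the conjugation plays different roles in the two proofs (producing a smaller related pair versus moving an already-collapsed idempotent), but in each case the crux is that the required order-preserving bijection between two subsets of $\dom(\varepsilon)$ must lie in $\AO_n$.

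Your diagnosis of the delicate point is exactly the paper's: when this bijection has rank $n-1$ (forcing $\varepsilon=\id_n$), Proposition~\ref{chAO} demands that its two gaps share parity, and the paper handles this by explicitly stipulating ``if $|\dom(\gamma)|=n-1$ then take $y\in\dom(\gamma)$ with $(-1)^y=(-1)^x$'', which is precisely your choice of $b\equiv a\pmod2$. For $n\geqslant4$ each parity class in $\Omega_n$ has at least two elements, so this choice is always available and no further case analysis is needed; your residual worry is confined to $n\in\{2,3\}$, which the paper's own proof does not treat separately either.
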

\begin{proof}
Let $\zeta\in E(\AO_n)$ be such that $\zeta<\varepsilon$ and $\varepsilon\rho\zeta$. 
Let $\gamma\in E(\AO_n)$ be such that $\gamma\leqslant\varepsilon$ and $\gamma\rho\varepsilon$, with $|\dom(\gamma)|$ minimum among all elements that verify these conditions. Then, $|\dom(\gamma)|\leqslant|\dom(\zeta)|<|\dom(\varepsilon)|$, whence $\dom(\gamma)\subset\dom(\varepsilon)$ and so we may take
 $x\in\dom(\varepsilon)\setminus\dom(\gamma)$. 

Suppose that $\gamma\neq\emptyset$. Take $y\in\dom(\gamma)$ and $X=(\dom(\gamma)\setminus\{y\})\cup\{x\}$. 
Additionally, if $|\dom(\gamma)|=n-1$ (whence $\varepsilon=\id_n$), then take $y\in\dom(\gamma)$ such that $(-1)^y=(-1)^x$. 
So, $|X|=|\dom(\gamma)|$. 
Let $\alpha$ be the only element of $\POI_n$ such that $\dom(\alpha)=\dom(\gamma)$ and $\im(\alpha)=X$. 
If $|\dom(\gamma)|<n-1$, then trivially $\alpha\in\AO_n$. On the other hand, if $|\dom(\gamma)|=n-1$, then $\gd(\alpha)=x$ and $\gi(\alpha)=y$, whence 
$(-1)^{\gd(\alpha)}=(-1)^x=(-1)^y=(-1)^{\gi(\alpha)}$ and so $\alpha\in\AO_n$.  

Since $X\subseteq\dom(\varepsilon)$, we have $\alpha\varepsilon\alpha^{-1}=\gamma$. 
On the other hand, 
$\alpha\gamma\alpha^{-1}$ is an idempotent of $\AO_n$ such that  $\varepsilon\rho\gamma=\alpha\varepsilon\alpha^{-1}\rho\alpha\gamma\alpha^{-1}$ and 
$\dom(\alpha\gamma\alpha^{-1})=\dom(\gamma)\setminus\{x\alpha^{-1}\}\subset\dom(\gamma)\subset\dom(\varepsilon)$. 
Hence, we have $\alpha\gamma\alpha^{-1}\leqslant\varepsilon$, $\alpha\gamma\alpha^{-1}\rho\varepsilon$ and $|\dom(\alpha\gamma\alpha^{-1})|<|\dom(\gamma)|$, 
which is a contradiction. 

Thus, $\gamma=\emptyset$ and so $\varepsilon\rho\emptyset$, as required.
\end{proof}

\smallskip 

Let $M$ be a monoid and let $I$ be an ideal of $M$. If $u\in I$ and $a\in M$ are such that $J_a\leqslant_\mathscr{J} J_u$, 
then $a\in MuM$ and so $a\in I$. Therefore, if $M/\mathscr{J} =\{J_{a_0}<_\mathscr{J} J_{a_1}<_\mathscr{J}\cdots<_\mathscr{J} J_{a_m}\}$, 
for some $a_0,a_1,\ldots,a_m\in M$, it is easy to conclude that the ideals of $M$ are the following $m+1$ subsets of $M$: 
$$
I_k=J_{a_0}\cup J_{a_1}\cup\cdots\cup J_{a_k}, ~ 0\leqslant k\leqslant m. 
$$
On the other hand, if 
$M/\mathscr{J} =\{J_{a_0}<_\mathscr{J} J_{a_1}<_\mathscr{J}\cdots<_\mathscr{J} J_{a_{m-2}}<_\mathscr{J} J_{a'_{m-1}}, J_{a''_{m-1}}<_\mathscr{J} J_{a_{m}}\}$ 
(with $J_{a'_{m-1}}$ and $J_{a''_{m-1}}$ being $\leqslant_\mathscr{J}$-incomparable), 
for some $a_0,a_1,\ldots,a_{m-2},a'_{m-1},a''_{m-1},a_m\in M$, it is not difficult to deduce that the ideals of $M$ are the following $m+3$ subsets of $M$: 
$$
I_k=J_{a_0}\cup J_{a_1}\cup\cdots\cup J_{a_k}, ~ 0\leqslant k\leqslant m-2, 
$$ 
$I'_{m-1}=I_{m-2}\cup J_{a'_{m-1}}$, $I''_{m-1}=I_{m-2}\cup J_{a''_{m-1}}$, $I_{m-1}=I_{m-2}\cup J_{a'_{m-1}}\cup J_{a''_{m-1}}$ and $I_m=M$. 

\medskip 

In particular, since 
$$
\AO_n/\mathscr{J}=\left\{J_0<_\mathscr{J}J_1<_\mathscr{J}\cdots<_\mathscr{J}J_{n-2}<_\mathscr{J} J_{n-1}^\mathscr{o}, J_{n-1}^\mathscr{e} <_\mathscr{J}J_n\right\}
$$ 
(see Figure \ref{fAO}), then $\AO_n$ has the following $n+3$ ideals:
$$
I_k=\left\{\alpha\in\AO_n\mid |\im(\alpha)|\leqslant k\right\}, ~ 0\leqslant k\leqslant n-2, 
$$
$I^\mathscr{o}_{n-1}=I_{n-2}\cup J_{n-1}^\mathscr{o}$, 
$I^\mathscr{e}_{n-1}=I_{n-2}\cup J_{n-1}^\mathscr{e}$, 
$$
I_{n-1}=I_{n-2}\cup J_{n-1}^\mathscr{o}\cup J_{n-1}^\mathscr{e} = \left\{\alpha\in\AO_n\mid |\im(\alpha)|\leqslant n-1\right\}
$$ 
and $I_n=\AO_n$. 

\smallskip 

Therefore, as an immediate consequence of Lemmas \ref{rfund} and \ref{erho0}, we have: 

\begin{theorem} \label{conAO} 
The congruences of the monoid $\AO_n$ are exactly its $n+3$ Rees congruences. 
\end{theorem}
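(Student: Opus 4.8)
The plan is to show that the two lemmas just established, namely Lemma \ref{rfund} and Lemma \ref{erho0}, combine to give the result almost immediately, and then to verify that each of the $n+3$ ideals does in fact yield a distinct Rees congruence so that none are spurious or coincident. First I would observe that $\AO_n$ is a finite $\mathscr{H}$-trivial inverse monoid: the group of units $J_n$ is trivial and all $\mathscr{H}$-classes are trivial (as remarked after Proposition \ref{carAO}), and $\AO_n$ is an inverse submonoid of $\I_n$ with zero element $\emptyset$. Thus Lemma \ref{rfund} applies: any congruence $\rho$ of $\AO_n$ satisfying the hypothesis that $e\,\rho\,f$ with $f<e$ forces $e\,\rho\,\emptyset$ is necessarily a Rees congruence.

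Next I would invoke Lemma \ref{erho0}, which is precisely the statement that every congruence $\rho$ of $\AO_n$ satisfies this hypothesis: whenever $\zeta<\varepsilon$ are idempotents with $\varepsilon\,\rho\,\zeta$, one has $\varepsilon\,\rho\,\emptyset$. Combining the two lemmas shows that \emph{every} congruence of $\AO_n$ is a Rees congruence. Since, conversely, every Rees congruence is a congruence, the congruences of $\AO_n$ are exactly its Rees congruences, and these are in bijection with the ideals of $\AO_n$. The ideal computation carried out just before the statement exhibits exactly $n+3$ ideals (the chain $I_0\subset I_1\subset\cdots\subset I_{n-2}$, the two incomparable ideals $I^\mathscr{o}_{n-1}$ and $I^\mathscr{e}_{n-1}$, together with $I_{n-1}$ and $I_n=\AO_n$), so there are exactly $n+3$ Rees congruences.

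The one point that deserves a word of care — and which I expect to be the only real obstacle — is that distinct ideals give distinct Rees congruences, so that the count $n+3$ is sharp rather than merely an upper bound. This follows because the Rees congruence $\rees{I}$ determines its ideal $I$ uniquely: $I$ is exactly the union of the nonsingleton $\rho$-classes (together with $\{0\}$ when $I=\{0\}$), so $\rees{I}=\rees{I'}$ forces $I=I'$. Hence the map $I\mapsto\rees{I}$ from ideals to congruences is injective, and the $n+3$ ideals listed above yield $n+3$ pairwise distinct Rees congruences.

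Putting these together, I would simply write that $\AO_n$ is $\mathscr{H}$-trivial with zero $\emptyset$, that Lemma \ref{erho0} verifies the hypothesis of Lemma \ref{rfund} for an arbitrary congruence, and conclude via Lemma \ref{rfund} that every congruence is Rees; the preceding enumeration of ideals then identifies these congruences as the $n+3$ Rees congruences. No further calculation is needed beyond noting the $\mathscr{H}$-triviality and the injectivity of $I\mapsto\rees{I}$.
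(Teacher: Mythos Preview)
Your proposal is correct and follows exactly the paper's approach: the theorem is stated there as ``an immediate consequence of Lemmas \ref{rfund} and \ref{erho0}'', and you have simply spelled out the verification that $\AO_n$ is $\mathscr{H}$-trivial and added the (standard) observation that distinct ideals yield distinct Rees congruences. No changes are needed.
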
 

Observe that, given a monoid $M$ and two ideals $I$ and $I'$ of $M$, we have $\rees{I}\subseteq\rees{I'}$ if and only if $I\subseteq I'$. 
Therefore, it is clear that the lattice $\con(\AO_n)$ can be represented by the Hasse diagram of Figure \ref{cAO}. 
\begin{figure}[H] 
\centering
\begin{tikzpicture}[scale=0.5]
\draw (0,1.4) node{\fbox{$\omega$}} ; 
\draw (0,-.09) node{\fbox{$\sim_{I_{n-1}}$}} ; 
\draw (-2,-1.62) node{\fbox{$\rees{I_{n-1}^\mathscr{o}}$}} ; 
\draw (2,-1.62) node{\fbox{$\rees{I_{n-1}^\mathscr{e}}$}} ; 
\draw (0,-3.1) node{\fbox{$\rees{I_{n-2}}$}} ; 
\draw (0,-4.98) node{\fbox{$\rees{I_1}$}} ; 
\draw (0,-6.4) node{\fbox{\scriptsize$\id$}} ; 
\draw[thin] (0,-.6) -- (-1,-1.02);
\draw[thin] (0,-.6) -- (1,-1.02);
\draw[thin] (0,-2.52) -- (1,-2.22);
\draw[thin] (0,-2.52) -- (-1,-2.22);
\draw[thin,dotted] (0,-3.7) -- (0,-4.4);
\draw[thin] (0,-5.5) -- (0,-5.98);
\draw[thin] (0,0.4) -- (0,1);
\end{tikzpicture}
\caption{The Hasse diagram of $\con(\AO_n)$.} \label{cAO}
\end{figure}
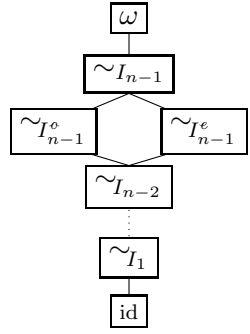

\medskip 

We now proceed to describe the congruences of the monoid $\AM_n$. 

\smallskip 

Let $M$ be a finite monoid and let $J$ be a $\mathscr{J}$-class
of $M$. Define 
$$
A(J)=\left\{ a\in M\mid J_a<_\mathscr{J}J \right\}
\quad\text{and}\quad 
B(J)=\left\{ a\in M\mid J\not\leqslant_\mathscr{J}J_a \right\}.
$$
It is not difficult to show that $A(J)$ and $B(J)$ 
are ideals of $M$ such that $A(J)\subseteq B(J)$ 
(more precisely, 
$
B(J)=A(J)\cup \left\{ a\in M\mid \mbox{$J$ and $J_a$ are $\leqslant_\mathscr{J}$-incomparable}\right\}
$) and $J\cap A(J)=J\cap B(J)=\emptyset$. 
Let $\contheta{J}$ [respectively, $\conpi{J}$] be the binary relation on $M$ defined by:
for $a,b\in M$, $a\contheta{J} b$ [respectively, $a\conpi{J} b$] if and only if
\begin{enumerate}
\item $a=b$; or
\item $a,b\in A(J)$ [respectively, $a,b\in B(J)$]; or
\item $a,b\in J$ and $a\mathscr{H}b$.
\end{enumerate}

In \cite[Lemma 4.2]{Fernandes:2000}, Fernandes proved that $\conpi{J}$ is a congruence of $M$. 
Next, we show that $\contheta{J}$ is also a congruence of $M$. Although the proof is very similar to the result mentioned above, 
we present it for the sake of completeness. 

\begin{lemma} \label{theta}
Let $M$ be a finite monoid and let $J$ be a $\mathscr{J}$-class
of $M$. Then, the relation $\contheta{J}$  is a congruence of $M$ such that $\contheta{J}\subseteq\conpi{J}$. 
\end{lemma}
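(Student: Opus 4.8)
The plan is to verify directly that $\contheta{J}$ is an equivalence relation and then that it is compatible with multiplication on both sides, mirroring the structure of the argument for $\conpi{J}$ in \cite[Lemma 4.2]{Fernandes:2000}. Reflexivity is built into clause~(1), and symmetry is immediate since each of the three defining clauses is symmetric in $a$ and $b$ (using that $\mathscr{H}$ is symmetric in clause~(3)). For transitivity I would argue by cases: the only slightly delicate combination is when $a\contheta{J}b$ via one clause and $b\contheta{J}c$ via another. Since $A(J)$, $J$, and $M\setminus(A(J)\cup J)$ partition the relevant part of $M$ and $A(J)$ is disjoint from $J$, an element related to something in $A(J)$ cannot simultaneously be forced into $J$ by a second relation unless the two are literally equal; thus the classes coming from clauses~(2) and~(3) interact only through clause~(1). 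For clause~(3) alone, transitivity follows from transitivity of $\mathscr{H}$.

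Next I would check that $\contheta{J}\subseteq\conpi{J}$: clause~(1) is common to both, clause~(3) is identical, and if $a,b\in A(J)$ then, since $A(J)\subseteq B(J)$, we have $a,b\in B(J)$, so the defining condition for $\conpi{J}$ is met. This inclusion is essentially a bookkeeping observation and costs nothing beyond recalling $A(J)\subseteq B(J)$.

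The substantive part is compatibility: I must show that $a\contheta{J}b$ implies $ca\contheta{J}cb$ and $ac\contheta{J}bc$ for every $c\in M$. By symmetry between left and right multiplication it suffices to treat, say, right multiplication. I split according to which clause gives $a\contheta{J}b$. If $a=b$ there is nothing to prove. If $a,b\in A(J)$, then since $A(J)$ is an ideal we get $ac,bc\in A(J)$, so clause~(2) applies to $ac$ and $bc$. The one case needing genuine care is clause~(3), where $a,b\in J$ with $a\mathscr{H}b$; here I would consider the $\mathscr{J}$-class of $ac$. Since $ac\in aM$ and $a\mathscr{R}b$ gives $aM=bM$, we have $J_{ac}\leqslant_\mathscr{J}J_a=J$, so either $ac\in A(J)$ or $ac\in J$. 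In the first case $J_{ac}<_\mathscr{J}J$, and because $a\mathscr{H}b$ forces $bc$ into the same situation (again via $aM=bM$ and $Ma=Mb$), both $ac$ and $bc$ lie in $A(J)$, giving clause~(2). In the second case $ac\in J$, and the standard fact that in a finite semigroup multiplication within a $\mathscr{J}$-class preserves $\mathscr{H}$-relatedness — concretely, $a\mathscr{H}b$ together with $ac,bc\in J$ yields $ac\mathscr{H}bc$ — delivers clause~(3).

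The main obstacle is precisely this clause~(3) compatibility check: one must rule out the situation where $ac$ stays in $J$ while $bc$ drops into $A(J)$ (or vice versa), which would break the relation. The resolution is that $a\mathscr{H}b$ implies $aM=bM$ and $Ma=Mb$, so $ac$ and $bc$ generate the same principal right and left ideals up to the $\mathscr{H}$-class structure, forcing $ac$ and $bc$ into the same $\mathscr{J}$-class and, when that class is $J$, into the same $\mathscr{H}$-class. This is the one point where the argument uses more than the ideal property of $A(J)$, and it is exactly the place where the proof parallels — and is no harder than — the corresponding step for $\conpi{J}$ in \cite[Lemma 4.2]{Fernandes:2000}.
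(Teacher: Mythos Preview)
Your proposal is correct and follows essentially the same route as the paper. The only stylistic difference is that the paper handles left multiplication and argues the clause~(3) case more crisply by using directly that $\mathscr{R}$ is a left congruence (so $a\mathscr{R}b$ gives $ua\mathscr{R}ub$, placing $ua,ub$ in the same $\mathscr{J}$-class at once) and then invoking stability in the finite semigroup ($a\mathscr{J}ua$ forces $a\mathscr{L}ua$) to obtain $ua\mathscr{H}ub$; your dual right-multiplication argument should lean on $Ma=Mb$ (i.e.\ $\mathscr{L}$ being a right congruence) rather than $aM=bM$ in the first instance, which you do invoke in the parenthetical but could state as the primary reason.
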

\begin{proof}
Since $A(J)\subseteq B(J)$, we immediately get $\contheta{J}\subseteq\conpi{J}$. 

It is clear that $\contheta{J}$ is an equivalence relation on $M$.
Therefore, it remains to prove that $\contheta{J}$ is compatible with multiplication.
Let $a,b\in M$ be such that $a\contheta{J}b$ and let $u\in M$. 
If $a=b$ then $ua=ub$ and so $ua\contheta{J}ub$. 
If $a,b\in A(J)$, as $A(J)$ is an ideal of $M$, then
$ua,ub\in A(J)$, whence $ua\contheta{J}ub$. 
So, suppose that $a,b\in J$ and $a\mathscr{H}b$. 
Then, in particular, $a\mathscr{R}b$ and so $ua\mathscr{R}ub$, 
from which follows that $ua\mathscr{J}ub$. 
Hence, we have $ua,ub\in A(J)$ or $ua,ub\in J$, since $J_{ua},J_{ub}\leqslant_\mathscr{J}J_a=J_b=J$. 
If $ua,ub\in A(J)$, then $ua\contheta{J}ub$. 
So, suppose that $ua,ub\in J$. Since $M$ is a finite semigroup, from $a\mathscr{J}ua$, we get 
$a\mathscr{L}ua$. Similarly, we obtain $b\mathscr{L}ub$. Since we also have $a\mathscr{L}b$, it follows that 
$ua\mathscr{L}ub$ and so $ua\mathscr{H}ub$. Hence, $ua\contheta{J}ub$. 
Similarly, we can prove that $\contheta{J}$ is compatible with multiplication on the right, as required. 
\end{proof}

The following lemma is easy to prove. 

\begin{lemma} \label{union}
Let $M$ be a finite monoid and let $J$ and $J'$ be two $\mathscr{J}$-classes 
of $M$ such that $A(J)=A(J')$. 
Then, the relation $\contheta{J}\cup\contheta{{J'}}$ is a congruence of $M$. 
\end{lemma}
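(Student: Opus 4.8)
The plan is to show that the union of the two congruences $\contheta{J}$ and $\contheta{{J'}}$ is already transitive (it is automatically symmetric and reflexive), and that it is compatible with multiplication. Since $\contheta{J}$ and $\contheta{{J'}}$ are individually congruences by Lemma \ref{theta}, the only genuine work is to handle the ``mixing'' that occurs when we take their union. The key structural observation I would use is that $A(J)=A(J')$ forces the two $\mathscr{J}$-classes $J$ and $J'$ to sit at the same level: every element strictly below $J$ is strictly below $J'$ and conversely, so in particular $J\not\leqslant_\mathscr{J}J'$ and $J'\not\leqslant_\mathscr{J}J$ (assuming $J\neq J'$; the case $J=J'$ is trivial). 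Write $A=A(J)=A(J')$ for brevity.

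The first step is to unwind the definition of $\rho:=\contheta{J}\cup\contheta{{J'}}$. A pair $(a,b)$ lies in $\rho$ precisely when one of the following holds: $a=b$; both $a,b\in A$; both $a,b\in J$ with $a\mathscr{H}b$; or both $a,b\in J'$ with $a\mathscr{H}b$. Because $A$, $J$, and $J'$ are pairwise disjoint, these alternatives never interfere except through the common block $A$. For transitivity I would take $a\mathrel\rho b$ and $b\mathrel\rho c$ and split on where $b$ lives. If $b\in A$ then both relations must be of the ``$A$'' type (an element of $J$ or $J'$ is never $\rho$-related to an element of $A$, since $\contheta{J}$ and $\contheta{{J'}}$ only relate $J$-elements to $J$-elements, etc.), so $a,c\in A$ and $a\mathrel\rho c$. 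If $b\in J$, then each of the two relations is either equality or the $\mathscr{H}$-relation within $J$, giving $a,c\in J$ with $a\mathscr{H}b\mathscr{H}c$ and hence $a\mathrel\rho c$ by transitivity of $\mathscr{H}$; symmetrically for $b\in J'$. Thus $\rho$ is an equivalence relation.

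The second step is compatibility with multiplication; by symmetry I treat left multiplication by an arbitrary $u\in M$ and argue identically on the right. Suppose $a\mathrel\rho b$. If $a=b$ or $a,b\in A$, then $ua\mathrel\rho ub$ exactly as in the proof of Lemma \ref{theta}, using that $A$ is an ideal. The remaining cases are $a,b\in J$ with $a\mathscr{H}b$ and $a,b\in J'$ with $a\mathscr{H}b$; but each of these is precisely the situation already handled inside the proof of Lemma \ref{theta} for the single congruence $\contheta{J}$ (respectively $\contheta{{J'}}$). Indeed, from $a\mathscr{R}b$ we get $ua\mathscr{R}ub$, so either $ua,ub\in A(J)=A$ (whence $ua\mathrel\rho ub$) or $ua,ub\in J$, in which case finiteness gives $a\mathscr{L}ua$ and $b\mathscr{L}ub$, and combined with $a\mathscr{L}b$ yields $ua\mathscr{H}ub$, so again $ua\mathrel\rho ub$. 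The verbatim argument with $J'$ in place of $J$ covers the last case. Hence $\rho$ is a congruence.

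I do not anticipate a serious obstacle here: the statement is flagged as ``easy to prove,'' and the real content is simply that the common bottom block $A$ is what glues the two congruences together consistently. The one point requiring a little care—and the step I would watch most closely—is transitivity through an element $b\in A$, where I must invoke that neither $\contheta{J}$ nor $\contheta{{J'}}$ ever relates an element of $A$ to an element of $J\cup J'$; this is where the disjointness of the blocks and the identity $A(J)=A(J')$ are doing the essential work. Everything else is a direct appeal to the compatibility argument already established in Lemma \ref{theta}.
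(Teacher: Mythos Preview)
Your argument is correct and matches the paper's approach: the paper simply notes that transitivity is the only non-trivial property, since compatibility with multiplication is automatic for a union of congruences (if $(a,b)\in\contheta{J}$ then already $(ua,ub),(au,bu)\in\contheta{J}\subseteq\rho$, and likewise for $\contheta{{J'}}$), so your second step is redundant though not wrong. One small addition: your case split on $b$ should also cover $b\notin A\cup J\cup J'$, where both relations force $a=b=c$.
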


Observe that, to show the previous lemma, the only non-trivial property we need to prove is transitivity. 
Observe also that, if $J$ and $J'$ are two distinct $\mathscr{J}$-classes 
of a monoid $M$ such that $A(J)=A(J')$, then $J$ and $J'$ are $\leqslant_\mathscr{J}$-incomparable. 

\medskip 

Next, recall that, for $n\equiv \{0,3\}\mod 4$, we have 
$$
\AM_n/\mathscr{J}=\left\{Q_0<_\mathscr{J}Q_1<_\mathscr{J}\cdots<_\mathscr{J}Q_{n-2}<_\mathscr{J} Q_{n-1} <_\mathscr{J}Q_n\right\}
$$ 
and so, in this case, $\AM_n$ has the following $n+1$ ideals:
$$
F_k=\left\{\alpha\in\AM_n\mid |\im(\alpha)|\leqslant k\right\}, ~ 0\leqslant k\leqslant n. 
$$
On the other hand, for $n\equiv \{1,2\}\mod 4$, we have 
$$
\AM_n/\mathscr{J}=\left\{Q_0<_\mathscr{J}Q_1<_\mathscr{J}\cdots<_\mathscr{J}Q_{n-2}<_\mathscr{J} Q_{n-1}^\mathscr{o}, Q_{n-1}^\mathscr{e} <_\mathscr{J}Q_n\right\}
$$ 
(see Figure \ref{fAM}) and so, in this case, $\AM_n$ has the following $n+3$ ideals:
$$
F_k=\left\{\alpha\in\AM_n\mid |\im(\alpha)|\leqslant k\right\}, ~ 0\leqslant k\leqslant n-2, 
$$
$F^\mathscr{o}_{n-1}=F_{n-2}\cup Q_{n-1}^\mathscr{o}$, 
$F^\mathscr{e}_{n-1}=F_{n-2}\cup Q_{n-1}^\mathscr{e}$, 
$$
F_{n-1}=F_{n-2}\cup Q_{n-1}^\mathscr{o}\cup Q_{n-1}^\mathscr{e} = \left\{\alpha\in\AM_n\mid |\im(\alpha)|\leqslant n-1\right\}
$$ 
and $F_n=\AM_n$. 

\smallskip 

Now, observe that: 
\begin{itemize}

\item In all cases, we have 
$
\id=\rees{F_0} = \contheta{{Q_0}} = \contheta{{Q_1}} \subset \rees{F_1} \subset \contheta{{Q_2}}  \subset \rees{F_2} 
\subset \cdots \subset  \contheta{{Q_{n-2}}}  \subset \rees{F_{n-2}};  
$

\item If $n\equiv \{0,3\}\mod 4$, then $\contheta{Q_k}=\conpi{\,Q_k}$, for $0\leqslant k\leqslant n$; 

\item If $n\equiv 0\mod 4$, then $\rees{F_{n-2}}=\contheta{Q_{n-1}} \subset \rees{F_{n-1}} \subset \contheta{Q_{n}}\subset \rees{F_{n}}=\omega$; 

\item If $n\equiv 3\mod 4$, then $\rees{F_{n-2}}=\contheta{Q_{n-1}} \subset \rees{F_{n-1}} = \contheta{Q_{n}}\subset \rees{F_{n}}=\omega$; 

\item If $n\equiv \{1,2\}\mod 4$, then $\contheta{Q_k}=\conpi{\,Q_k}$, for $0\leqslant k\leqslant n-2$, and $\contheta{Q_n}=\conpi{\,Q_n}$. 
In this case, we also have: 
$$
\rees{F_{n-2}} \subset \contheta{Q_{n-1}^\mathscr{o}} \subset \rees{F_{n-1}^\mathscr{o}} \subset \conpi{\,Q_{n-1}^\mathscr{e}} \subset \rees{F_{n-1}}, ~
\rees{F_{n-2}} \subset \contheta{Q_{n-1}^\mathscr{e}} \subset \rees{F_{n-1}^\mathscr{e}} \subset \conpi{\,Q_{n-1}^\mathscr{o}} \subset \rees{F_{n-1}}, ~
$$
$$
\conpi{\,Q_{n-1}^\mathscr{o}} \cap\, \conpi{\,Q_{n-1}^\mathscr{e}} = \contheta{Q_{n-1}^\mathscr{o}} \cup\, \contheta{Q_{n-1}^\mathscr{e}}, 
\rees{F_{n-1}^\mathscr{e}} \cap\, \conpi{\,Q_{n-1}^\mathscr{e}} =  \contheta{Q_{n-1}^\mathscr{e}}, ~ 
\rees{F_{n-1}^\mathscr{o}} \cap\, \conpi{\,Q_{n-1}^\mathscr{o}} =  \contheta{Q_{n-1}^\mathscr{o}}
$$
and 
$$
\rees{F_{n-1}^\mathscr{o}} \cap \rees{F_{n-1}^\mathscr{e}} = \rees{F_{n-2}} = \contheta{Q_{n-1}^\mathscr{o}}\cap \contheta{Q_{n-1}^\mathscr{e}}; 
$$

\item If $n\equiv 1\mod 4$, then $\rees{F_{n-1}} \subset \contheta{Q_{n}}\subset \rees{F_{n}}=\omega$; 

\item If $n\equiv 2\mod 4$, then $\rees{F_{n-1}} = \contheta{Q_{n}}\subset \rees{F_{n}}=\omega$. 
\end{itemize} 

Let $(P_1,\leqslant_1)$ and
$(P_2,\leqslant_2)$ be two disjoint posets. The ordinal sum of $P_1$
and $P_2$ (in this order) is the poset $P_1\oplus P_2$ with
base set $P_1\cup P_2$ and partial order $\leqslant$ defined by: for all
$x,y\in P_1\cup P_2$, we have $x\leqslant y$ if and only if $x\in P_1$ and $y\in
P_2$; or $x,y\in P_1$ and $x\leqslant_1 y$; or $x,y\in P_2$ and $x\leqslant_2
y$. Note that this operator on posets is associative but not commutative.

We have the following description of the congruences of $\AM_n$: 

\begin{theorem} \label{conAM} 
Let $C=\left\{  
\id \subset \rees{F_1} \subset \contheta{{Q_2}}  \subset \rees{F_2} 
\subset \cdots \subset  \contheta{{Q_{n-2}}}  \subset \rees{F_{n-2}}\right\}$. Then: 
\begin{enumerate}
\item If $n\equiv 0\mod 4$, then 
$\con(\AM_n)=C\oplus\left\{ \rees{F_{n-1}} \subset \contheta{Q_{n}}\subset \omega\right\}$; 

\item If $n\equiv 1\mod 4$, then 
$$
\con(\AM_n)=C\oplus\left\{
\contheta{Q_{n-1}^\mathscr{o}}, \contheta{Q_{n-1}^\mathscr{e}}, 
\contheta{Q_{n-1}^\mathscr{o}}\cup\, \contheta{Q_{n-1}^\mathscr{e}}, 
\rees{F_{n-1}^\mathscr{o}},  \rees{F_{n-1}^\mathscr{e}},
\conpi{\,Q_{n-1}^\mathscr{o}} , \conpi{\,Q_{n-1}^\mathscr{e}} \subset  
\rees{F_{n-1}} \subset \contheta{Q_{n}}\subset \omega
\right\};
$$ 

\item If $n\equiv 2\mod 4$, then 
$$
\con(\AM_n)=C\oplus\left\{
\contheta{Q_{n-1}^\mathscr{o}}, \contheta{Q_{n-1}^\mathscr{e}}, 
\contheta{Q_{n-1}^\mathscr{o}}\cup\, \contheta{Q_{n-1}^\mathscr{e}}, 
\rees{F_{n-1}^\mathscr{o}},  \rees{F_{n-1}^\mathscr{e}},
\conpi{\,Q_{n-1}^\mathscr{o}} , \conpi{\,Q_{n-1}^\mathscr{e}}  \subset 
\rees{F_{n-1}} \subset \omega
\right\};
$$ 

\item If $n\equiv 3\mod 4$, then $\con(\AM_n)=C\oplus\left\{ \rees{F_{n-1}} \subset \omega\right\}$.
\end{enumerate} 
\end{theorem} 
\begin{proof}
Let $\rho$ be a congruence of $\AM_n$. Let $\rho'=\rho\cap(\AO_n\times\AO_n)$. 
Then, by Theorem \ref{conAO}, we have $\rho'=\rees{I}$, for some ideal $I$ of $\AO_n$. 
Observe that, if $\alpha,\beta\in\AM_n$ are such that $\alpha\rho\beta$, then $\alpha^{-1}\rho\beta^{-1}$, 
whence $\alpha\alpha^{-1}\rho\beta\beta^{-1}$ and $\alpha^{-1}\alpha\rho\beta^{-1}\beta$, from which follows 
$\alpha\alpha^{-1}\rho'\beta\beta^{-1}$ and $\alpha^{-1}\alpha\rho'\beta^{-1}\beta$, 
since $\alpha\alpha^{-1},\alpha^{-1}\alpha,\beta\beta^{-1},\beta^{-1}\beta\in\AO_n$. 

\smallskip

If $I=\AO_n$, then $\rho'$ is the universal congruence of $\AO_n$ and so, in particular, $\id_n\rho'\emptyset$, whence $\id_n\rho\emptyset$. 
Therefore, in this case, we get 
$\alpha=\id_n\alpha\rho\emptyset\alpha=\emptyset$, for all $\alpha\in\AM_n$, i.e. $\rho$ is the universal congruence of $\AM_n$. 

\smallskip 

Next, we suppose that $I=I_k$, for some $0\leqslant k\leqslant n-1$. 

Let $\alpha\in F_k$. Then, $\alpha\alpha^{-1}\in I_k$ and so $\alpha\alpha^{-1}\rho'\emptyset$, 
whence  $\alpha\alpha^{-1}\rho\emptyset$, which implies $\alpha=\alpha\alpha^{-1}\alpha\rho\emptyset\alpha=\emptyset$. 
Thus, $\rees{F_k}\subseteq\rho$. 

Now, let $\alpha\in\AM_n\setminus F_k$ and take $\beta\in\AM_n$ such that $\alpha\rho\beta$. 
Then, $\alpha\alpha^{-1}\rho'\beta\beta^{-1}$ and $\alpha^{-1}\alpha\rho'\beta^{-1}\beta$, 
whence $\alpha\alpha^{-1}=\beta\beta^{-1}$ and $\alpha^{-1}\alpha=\beta^{-1}\beta$, 
since $\alpha\alpha^{-1},\alpha^{-1}\alpha\not\in I_k$. Therefore, $\alpha\mathscr{H}\beta$. 

At this point, observe that, if $k=n-1$, then $\rho=\contheta{Q_n}=\conpi{Q_n}$ or $\rho=\rees{F_{n-1}}$. 
Moreover, in this case, for $n\equiv\{2,3\}\mod 4$, we get exactly $\rho=\rees{F_{n-1}} (=\contheta{Q_n}=\conpi{Q_n})$, 
since $Q_n=\{\id_n\}$. 
Therefore, from now on, we suppose that $0\leqslant k\leqslant n-2$.

Let $\alpha,\beta\in\AM_n$ be such that $\alpha\rho\beta$ and $|\im(\alpha)|\geqslant k+2$. 
Then, as showed above, we have $\alpha\mathscr{H}\beta$ and so $\dom(\alpha)=\dom(\beta)$ and $\im(\alpha)=\im(\beta)$. 
Let $\varepsilon=\id_{\dom(\alpha)\setminus\{\min\dom(\alpha)\}}$. 
Then, $\varepsilon\in\AM_n$ and 
$\dom(\varepsilon\alpha)=\dom(\varepsilon\beta)=\dom(\varepsilon)=\dom(\alpha)\setminus\{\min\dom(\alpha)\}$, 
whence $|\im(\varepsilon\alpha)|\geqslant k+1$ and so $\varepsilon\alpha\not\in F_k$. 
Since $\alpha\rho\beta$, we also have $\varepsilon\alpha\rho\varepsilon\beta$ and so $\varepsilon\alpha\mathscr{H}\varepsilon\beta$. 
If $\alpha\neq\beta$, then $(\min\dom(\alpha))\alpha = \min\im(\alpha)$ and $(\min\dom(\alpha))\beta = \max\im(\alpha)$ or vice-versa, 
whence  $\im(\varepsilon\alpha)\neq\im(\varepsilon\beta)$, which is a contradiction since $\varepsilon\alpha\mathscr{H}\varepsilon\beta$. 
Therefore, $\alpha=\beta$. 

Suppose that $0\leqslant k\leqslant n-3$ and let $\alpha\in Q_{k+1}$ and $\beta\in\AM_n$ be such that $\alpha\mathscr{H}\beta$. 
Let 
$$
\alpha_L=\transf{1&\cdots&k+1\\a_1&\cdots&a_{k+1}}\quad\text{and}\quad \alpha_R=\transf{b_1&\cdots&b_{k+1}\\1&\cdots&k+1}, 
$$
where $\{a_1<\cdots<a_{k+1}\}=\dom(\alpha)=\dom(\beta)$ and $\{b_1<\cdots<b_{k+1}\}=\im(\alpha)=\im(\beta)$. 
Then, $\alpha_L,\alpha_R\in Q_{k+1}$ (notice that, $k+1\leqslant n-2$), 
$\alpha=\alpha_L^{-1}\alpha_L\alpha\alpha_R\alpha_R^{-1}$ and $\beta=\alpha_L^{-1}\alpha_L\beta\alpha_R\alpha_R^{-1}$. 
Let 
$$
\varepsilon_{k+1}=\transf{1&\cdots&k+1\\1&\cdots&k+1}=\alpha_L\alpha_L^{-1}=\alpha_R^{-1}\alpha_R
\quad\text{and}\quad \tau_{k+1}=\transf{1&\cdots&k+1\\k+1&\cdots&1}.  
$$
Then, $\varepsilon_{k+1},\tau_{k+1}\in Q_{k+1}$, $\varepsilon_{k+1}\mathscr{H}\tau_{k+1}$ and, if $\alpha\neq\beta$, then 
$$
\{\alpha_L\alpha\alpha_R,\alpha_L\beta\alpha_R\}=\{\varepsilon_{k+1},\tau_{k+1}\}
\quad\text{and}\quad 
\{\alpha_L^{-1}\varepsilon_{k+1}\alpha_R^{-1},\alpha_L^{-1}\tau_{k+1}\alpha_R^{-1}\}=\{\alpha,\beta\}, 
$$
whence 
$$
\alpha\rho\beta \Longrightarrow \alpha_L\alpha\alpha_R\rho\alpha_L\beta\alpha_R 
\Longleftrightarrow \varepsilon_{k+1}\rho\tau_{k+1} \Longrightarrow \alpha_L^{-1}\varepsilon_{k+1}\alpha_R^{-1}\rho\alpha_L^{-1}\tau_{k+1}\alpha_R^{-1}
\Longleftrightarrow \alpha\rho\beta, 
$$
i.e. $\alpha\rho\beta$ if and only if $\varepsilon_{k+1}\rho\tau_{k+1}$. 
Therefore, 
$$
(\varepsilon_{k+1},\tau_{k+1})\in\rho \Longrightarrow \rho=\contheta{Q_{k+1}}=\conpi{\,Q_{k+1}}
\quad\text{and}\quad  
(\varepsilon_{k+1},\tau_{k+1})\not\in\rho \Longrightarrow \rho=\rees{F_k}.
$$

Now, suppose that $k=n-2$. 
If $n\equiv \{0,3\}\mod 4$, then $Q_{n-1}$ has trivial $\mathscr{H}$-classes, 
and so $\alpha\rho\beta$ implies that $\alpha=\beta$, for all $\alpha\in Q_{n-1}$ and $\beta\in\AM_n$, 
whence $\rho=\rees{F_{n-2}}$. Therefore, suppose that $n\equiv \{1,2\}\mod 4$. 
Let $\alpha\in Q_{n-1}^\mathscr{o}$ [respectively, $\alpha\in Q_{n-1}^\mathscr{e}$]. 
Let 
$$
\mbox{$\alpha_L=\transf{2&\cdots&n\\a_1&\cdots&a_{n-1}}$ and $\alpha_R=\transf{b_1&\cdots&b_{n-1}\\2&\cdots&n}$  
[respectively, $\alpha_{L'}=\transf{1&3&\cdots&n\\a_1&a_2&\cdots&a_{n-1}}$ and $\alpha_{R'}=\transf{b_1&b_2&\cdots&b_{n-1}\\1&3&\cdots&n}$]},  
$$
where $\{a_1<\cdots<a_{n-1}\}=\dom(\alpha)$ and $\{b_1<\cdots<b_{n-1}\}=\im(\alpha)$. 
Since $(-1)^{\gd(\alpha)}=-1=(-1)^{\gi(\alpha)}$ and $\gd(\alpha_L)=1=\gi(\alpha_R)$ 
[respectively, $(-1)^{\gd(\alpha)}=1=(-1)^{\gi(\alpha)}$ and $\gd(\alpha_{L'})=2=\gi(\alpha_{R'})$], 
we have $\alpha_L,\alpha_R\in Q_{n-1}^\mathscr{o}$ [respectively, $\alpha_{L'},\alpha_{R'}\in Q_{n-1}^\mathscr{e}$]. 
Let 
$$
\mbox{
$\varepsilon_{n-1}=\transf{2&\cdots&n\\2&\cdots&n}=\alpha_L\alpha_L^{-1}=\alpha_R^{-1}\alpha_R$ and $\tau_{n-1}=\transf{2&\cdots&n\\n&\cdots&2}$ 
}
$$
$$
\mbox{
[respectively,   $\varepsilon'_{n-1}=\transf{1&3&\cdots&n\\1&3&\cdots&n}=\alpha_{L'}\alpha_{L'}^{-1}=\alpha_{R'}^{-1}\alpha_{R'}$ and 
$\tau'_{n-1}=\transf{1&3&\cdots&n&n-1&\\n&n-1&\cdots&3&1}$]. 
}
$$
Then, $\varepsilon_{n-1},\tau_{n-1}\in Q_{n-1}^\mathscr{o}$ and $\varepsilon_{n-1}\mathscr{H}\tau_{n-1}$ 
[respectively, $\varepsilon'_{n-1},\tau'_{n-1}\in Q_{n-1}^\mathscr{e}$ and $\varepsilon'_{n-1}\mathscr{H}\tau'_{n-1}$]. 
Let $\beta\in\AM_n$ be such that $\alpha\mathscr{H}\beta$. If $\alpha\neq\beta$, then 
$$
\{\alpha_L\alpha\alpha_R,\alpha_L\beta\alpha_R\}=\{\varepsilon_{n-1},\tau_{n-1}\}
\quad\text{and}\quad 
\{\alpha_L^{-1}\varepsilon_{n-1}\alpha_R^{-1},\alpha_L^{-1}\tau_{n-1}\alpha_R^{-1}\}=\{\alpha,\beta\} 
$$
and 
$$
\alpha\rho\beta \Longrightarrow \alpha_L\alpha\alpha_R\rho\alpha_L\beta\alpha_R 
\Longleftrightarrow \varepsilon_{n-1}\rho\tau_{n-1} \Longrightarrow \alpha_L^{-1}\varepsilon_{n-1}\alpha_R^{-1}\rho\alpha_L^{-1}\tau_{n-1}\alpha_R^{-1}
\Longleftrightarrow \alpha\rho\beta, 
$$
i.e. $\alpha\rho\beta$ if and only if $\varepsilon_{n-1}\rho\tau_{n-1}$
[respectively, in a similar way, we deduce that $\alpha\rho\beta$ if and only if $\varepsilon'_{n-1}\rho\tau'_{n-1}$]. 
Therefore: 
\begin{description}
\item If $(\varepsilon_{n-1},\tau_{n-1})\not\in\rho$ and $(\varepsilon'_{n-1},\tau'_{n-1})\not\in\rho$, then $\rho=\rees{F_{n-2}}$; 
\item If $(\varepsilon_{n-1},\tau_{n-1})\in\rho$ and $(\varepsilon'_{n-1},\tau'_{n-1})\not\in\rho$, then $\rho=\contheta{Q_{n-1}^\mathscr{o}}$; 
\item If $(\varepsilon_{n-1},\tau_{n-1})\not\in\rho$ and $(\varepsilon'_{n-1},\tau'_{n-1})\in\rho$, then $\rho=\contheta{Q_{n-1}^\mathscr{e}}$; 
\item If $(\varepsilon_{n-1},\tau_{n-1})\in\rho$ and $(\varepsilon'_{n-1},\tau'_{n-1})\in\rho$, then $\rho=\contheta{Q_{n-1}^\mathscr{o}}\cup\contheta{Q_{n-1}^\mathscr{e}}$ 
(observe that, $A(Q_{n-1}^\mathscr{o})=F_{n-2}=A(Q_{n-1}^\mathscr{e})$). 
\end{description}
 
\smallskip 

At this stage, it remains to consider the cases $I=I_{n-1}^\mathscr{o}$ and $I=I_{n-1}^\mathscr{e}$. 
So, let us suppose that $I=I_{n-1}^\mathscr{o}$ [respectively, $I=I_{n-1}^\mathscr{e}$]. 

Let $\alpha\in F_{n-2}$. Then, $\alpha\alpha^{-1}\in I_{n-2}$ and so  $\alpha\alpha^{-1}\rho'\emptyset$, whence  $\alpha\alpha^{-1}\rho\emptyset$, 
from which follows $\alpha=\alpha\alpha^{-1}\alpha\rho\emptyset\alpha=\emptyset$. 

Let $\alpha\in Q_n$ and let $\beta\in\AM_n$ be such that $\alpha\rho\beta$. 
Then, $\alpha\alpha^{-1}\rho'\beta\beta^{-1}$ and so $\beta\beta^{-1}=\alpha\alpha^{-1}=\id_n$, whence $\beta\in Q_n$. 
Notice that, if $n\equiv\{2,3\}\mod4$, then $Q_n=\{\id_n\}$ and so $\alpha=\beta$. 

Suppose that $n\equiv \{1,2\}\mod 4$. 

If $\alpha\in Q_{n-1}^\mathscr{o}$ [respectively,  $\alpha\in Q_{n-1}^\mathscr{e}$], then $\alpha\alpha^{-1}\in I_{n-1}^\mathscr{o}$ 
[respectively, $\alpha\alpha^{-1}\in I_{n-1}^\mathscr{e}$], whence $\alpha\alpha^{-1}\rho'\emptyset$ and so $\alpha\alpha^{-1}\rho\emptyset$, 
from which follows $\alpha=\alpha\alpha^{-1}\alpha\rho\emptyset\alpha=\emptyset$. 
Therefore, $\rees{F_{n-1}^\mathscr{o}}\subseteq\rho$ [respectively, $\rees{F_{n-1}^\mathscr{e}}\subseteq\rho$]. 

Next, let $\alpha\in Q_{n-1}^\mathscr{e}$ [respectively,  $\alpha\in Q_{n-1}^\mathscr{o}$] and $\beta\in\AM_n$ be such that $\alpha\rho\beta$. 
Then, 
$\alpha\alpha^{-1}\rho'\beta\beta^{-1}$ and $\alpha^{-1}\alpha\rho'\beta^{-1}\beta$, 
whence $\beta\beta^{-1}=\alpha\alpha^{-1}$ and $\beta^{-1}\beta=\alpha^{-1}\alpha$, 
since $\alpha\alpha^{-1},\alpha^{-1}\alpha\in I_{n-1}^\mathscr{e}$ [respectively, $\alpha\alpha^{-1},\alpha^{-1}\alpha\in I_{n-1}^\mathscr{o}$]. 
Thus, $\alpha\mathscr{H}\beta$. 
Conversely, let $\alpha,\beta\in Q_{n-1}^\mathscr{e}$ [respectively,  $\alpha,\beta\in Q_{n-1}^\mathscr{o}$] be such that $\alpha\mathscr{H}\beta$.
We can then deduce, in a similar way to the case of $I=I_{n-2}$, that 
$$
\mbox{
$\alpha\rho\beta$ if and only if $\varepsilon'_{n-1}\rho\tau'_{n-1}$
[respectively, $\alpha\rho\beta$ if and only if $\varepsilon_{n-1}\rho\tau_{n-1}$]}, 
$$ 
where $\varepsilon'_{n-1}, \tau'_{n-1}\in Q_{n-1}^\mathscr{e}$ [respectively, $\varepsilon_{n-1}, \tau_{n-1}\in Q_{n-1}^\mathscr{o}$] 
are as in the case $I=I_{n-2}$. 

Now, for $n\equiv 1\mod4$, if $h\rho\id_n$, then
$$
\mbox{
$\transf{1&3&\cdots&n\\n&n-2&\cdots&1}=\varepsilon'_{n-1}h\rho \varepsilon'_{n-1}\id_n=\varepsilon'_{n-1}$ 
and $(\varepsilon'_{n-1}h,\varepsilon'_{n-1})\not\in\mathscr{H}$
} 
$$
$$
\mbox{[respectively, 
$\transf{2&\cdots&n\\n-1&\cdots&1}=\varepsilon_{n-1}h\rho \varepsilon_{n-1}\id_n=\varepsilon_{n-1}$ and $(\varepsilon_{n-1}h,\varepsilon_{n-1})\not\in\mathscr{H}$], 
} 
$$
which is a contradiction, whence $(h,\id_n)\not\in\rho$. 

Therefore, if $\varepsilon'_{n-1}\rho\tau'_{n-1}$ [respectively, $\varepsilon_{n-1}\rho\tau_{n-1}$], then $\rho=\conpi{\,Q_{n-1}^\mathscr{e}}$ 
[respectively, $\rho=\conpi{\,Q_{n-1}^\mathscr{o}}$], otherwise $\rho=\rees{F_{n-1}^\mathscr{o}}$ [respectively, $\rho=\rees{F_{n-1}^\mathscr{e}}$]. 

Finally, suppose that $n\equiv \{0,3\}\mod 4$. 

If $\alpha\in J_{n-1}^\mathscr{o}$ [respectively,  $\alpha\in J_{n-1}^\mathscr{e}$], then $\alpha\rho'\emptyset$ and so $\alpha\rho\emptyset$. 
Next, let $\alpha\in J_{n-1}^\mathscr{e}$ [respectively,  $\alpha\in J_{n-1}^\mathscr{o}$] and suppose that 
$\dom(\alpha)=\{a_1<\cdots<a_{n-1}\}$ and $\im(\alpha)=\{b_1<\cdots<b_{n-1}\}$. 
Take 
$$
\mbox{
$\alpha_L=\transf{2&\cdots&n\\a_{n-1}&\cdots&a_1}$ and $\alpha_R=\transf{b_1&\cdots&b_{n-1}\\n&\cdots&2}$ 
}
$$
$$
\mbox{
[respectively, $\alpha_L=\transf{1&3&4&\cdots&n\\a_{n-1}&a_{n-2}&a_{n-3}&\cdots&a_1}$ and $\alpha_R=\transf{b_1&\cdots&b_{n-3}&b_{n-2}&b_{n-1}\\n&\cdots&4&3&1}$].  
}
$$
Since $(-1)^{\gd(\alpha)}=(-1)^{\gi(\alpha)}=1$ and $\gd(\alpha_L)=\gi(\alpha_R)=1$ 
[respectively, $(-1)^{\gd(\alpha)}=(-1)^{\gi(\alpha)}=-1$ and $\gd(\alpha_L)=\gi(\alpha_R)=2$], we have $\alpha_L,\alpha_R\in Q_{n-1}$. 
Moreover, 
$$
\mbox{
$\alpha_L\alpha\alpha_R=\transf{2&\cdots&n\\2&\cdots&n}\in J_{n-1}^\mathscr{o}$ 
[respectively, $\alpha_L\alpha\alpha_R=\transf{1&3&4&\cdots&n\\1&3&4&\cdots&n}\in J_{n-1}^\mathscr{e}$],  
}
$$
whence $\alpha_L\alpha\alpha_R\rho'\emptyset$ and so $\alpha_L\alpha\alpha_R\rho\emptyset$, 
from which follows $\alpha = \alpha_L^{-1}\alpha_L\alpha\alpha_R\alpha_R^{-1}\rho  \alpha_L^{-1}\emptyset\alpha_R^{-1}=\emptyset$. 
Now, if $\alpha\in Q_{n-1}$, then $\alpha\alpha^{-1}\in J_{n-1}^\mathscr{o}\cup J_{n-1}^\mathscr{e}$, whence $\alpha\alpha^{-1}\rho\emptyset$ and so 
$\alpha=\alpha\alpha^{-1}\alpha\rho\emptyset\alpha=\emptyset$. 
Therefore, $\rees{F_{n-1}}\subseteq\rho$ and thus $\rho=\rees{F_{n-1}}$ or $\rho=\contheta{Q_n}=\conpi{\,Q_n}$. 
In particular, for $n\equiv 3\mod 4$, we get exactly $\rho=\rees{F_{n-1}} (=\contheta{Q_n}=\conpi{Q_n})$, 
since $Q_n=\{\id_n\}$. 

The proof is now complete. 
\end{proof} 

Observe that, for $n\equiv \{1,2\}\mod 4$, the interval $[\rees{F_{n-2}},\rees{F_{n-1}}]$ of the lattice $\con(\AM_n)$ can be represented by the Hasse diagram of Figure \ref{intcon}. 
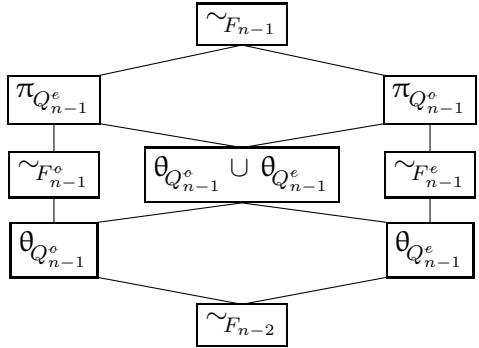
\begin{figure}[H] 
\centering
\begin{tikzpicture}[scale=0.5]
\draw (0,0) node{\fbox{$\rees{F_{n-1}}$}} ;  
\draw (-5,-2) node{\fbox{$\conpi{\,Q_{n-1}^\mathscr{e}}$}} ; 
\draw (5,-2) node{\fbox{$\conpi{\,Q_{n-1}^\mathscr{o}}$}} ; 
\draw (-5,-4) node{\fbox{$\rees{F_{n-1}^\mathscr{o}}$}} ; 
\draw (5,-4) node{\fbox{$\rees{F_{n-1}^\mathscr{e}}$}} ; 
\draw (-5,-6) node{\fbox{$\contheta{Q_{n-1}^\mathscr{o}}$}} ; 
\draw (5,-6) node{\fbox{$\contheta{Q_{n-1}^\mathscr{e}}$}} ; 
\draw (0,-8) node{\fbox{$\rees{F_{n-2}}$}} ; 
\draw (0,-4) node{\fbox{$\contheta{Q_{n-1}^\mathscr{o}}\cup\,\contheta{Q_{n-1}^\mathscr{e}}$}} ; 
\draw[thin] (0,-.55) -- (-3.8,-1.36);
\draw[thin] (0,-.55) -- (3.8,-1.36);
\draw[thin] (0,-3.28) -- (-3.8,-2.64);
\draw[thin] (0,-3.28) -- (3.8,-2.64);
\draw[thin] (0,-4.75) -- (-3.85,-5.27);
\draw[thin] (0,-4.75) -- (3.85,-5.27);
\draw[thin] (0,-7.45) -- (-3.85,-6.73);
\draw[thin] (0,-7.45) -- (3.85,-6.73);
\draw[thin] (-5,-2.65) -- (-5,-3.4);
\draw[thin] (5,-2.65) -- (5,-3.4);
\draw[thin] (-5,-4.6) -- (-5,-5.27);
\draw[thin] (5,-4.6) -- (5,-5.27);
\end{tikzpicture}
\caption{The Hasse diagram of $[\rees{F_{n-2}},\rees{F_{n-1}}]$, for $n\equiv \{1,2\}\mod 4$.} \label{intcon}
\end{figure}

We finish this section by noticing that 
$$
|\con(\AM_n)|=
\left\{
\begin{array}{ll}
2n-1 & \mbox{if $n\equiv 0\mod 4$}\\
2n+6 & \mbox{if $n\equiv 1\mod 4$}\\
2n+5 & \mbox{if $n\equiv 2\mod 4$}\\
2n-2 & \mbox{if $n\equiv 3\mod 4$}. 
\end{array}
\right.
$$

\section{Generators and rank of $\AO_n$} \label{rAO} 

Let $X_i=\Omega_n\setminus\{i\}$, for $1\leqslant i\leqslant n$. Let $x_1,x_2,\ldots,x_n\in\POI_n$ defined by  
$$
x_1=\left\{\begin{array}{ll}
\transf{2&\cdots&n\\1&\cdots&n-1}=\transf{X_1\\X_n} & \mbox{if $n$ is odd}\\
\transf{2&\cdots&n-1&n\\1&\cdots&n-2&n}=\transf{X_1\\X_{n-1}} & \mbox{if $n$ is even}
\end{array}\right.,
\quad 
x_2=\left\{\begin{array}{ll}
\transf{1&3&\cdots&n-1&n\\1&2&\cdots&n-2&n}=\transf{X_2\\X_{n-1}} & \mbox{if $n$ is odd}\\
\transf{1&3&\cdots&n\\1&2&\cdots&n-1}=\transf{X_2\\X_n} & \mbox{if $n$ is even}
\end{array}\right.
$$
and
$$
x_i=\transf{1&\cdots&i-3&i-2&i-1&i+1&\cdots&n\\1&\cdots&i-3&i-1&i&i+1&\cdots&n}=\transf{X_i\\X_{i-2}}, ~\mbox{for $3\leqslant i\leqslant n$}. 
$$
By Proposition \ref{chAO}, it is clear that $x_1,x_2,\ldots,x_n\in\AO_n$. Our next objective is to show that $x_1,x_2,\ldots,x_n$ generate $\AO_n$. 
We begin by presenting a series of lemmas.

First, we recall that, in the proof of \cite[Lemma 2.7]{Fernandes:2001}, we proved the following result.

\begin{lemma}\label{n-2}
Let $0\leqslant k\leqslant n-3$. Then, every element of $J_k$ is a product of elements of $J_{k+1}$. 
\end{lemma}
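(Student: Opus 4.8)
The plan is to realize an arbitrary $\alpha\in J_k$ as a product of two elements of $J_{k+1}$. Write $\alpha=\transf{A\\B}$ with $A=\{a_1<\cdots<a_k\}=\dom(\alpha)$ and $B=\{b_1<\cdots<b_k\}=\im(\alpha)$, so that $a_i\alpha=b_i$. Recalling that, for partial permutations, $\dom(\beta\gamma)=(\im(\beta)\cap\dom(\gamma))\beta^{-1}$, I would look for $\beta,\gamma\in J_{k+1}$ whose image and domain overlap in exactly $k$ points; the single point by which each of $\beta$ and $\gamma$ exceeds $\alpha$ must then fail to survive the composite.

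To set this up I introduce, for a $k$-subset $X$ of $\Omega_n$ and $0\leqslant j\leqslant k$, the notion of a \emph{nonempty gap at position $j$}: a point of $\Omega_n\setminus X$ having exactly $j$ elements of $X$ below it. Since $k<n$, both $A$ and $B$ possess at least one nonempty gap; fix such positions $s$ for $A$ and $t$ for $B$. The key step is to choose an intermediate $k$-subset $P=\{p_1<\cdots<p_k\}$ that has a nonempty gap at position $s$ \emph{and} a nonempty gap at position $t$. Because $k\leqslant n-3$ we have $|\Omega_n\setminus P|=n-k\geqslant 3$, so I can prescribe the gap sizes of $P$ freely (taking them $\geqslant 1$ at positions $s$ and $t$, or $\geqslant 2$ at the common position when $s=t$) and thereby produce such a $P$ together with distinct points $u,v\in\Omega_n\setminus P$ sitting at positions $s$ and $t$ respectively.

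With $c\in\Omega_n\setminus A$ at position $s$ and $d\in\Omega_n\setminus B$ at position $t$, I set
$$
\beta=\transf{A\cup\{c\}\\P\cup\{u\}}\quad\text{and}\quad\gamma=\transf{P\cup\{v\}\\B\cup\{d\}},
$$
the unique order-preserving partial permutations with these domains and images; both lie in $J_{k+1}$. Since $c$ and $u$ share the position $s$, the map $\beta$ sends $A$ order-isomorphically onto $P$ (with $a_i\mapsto p_i$) and $c\mapsto u$; likewise $\gamma$ sends $P$ onto $B$ (with $p_i\mapsto b_i$) and $v\mapsto d$. As $u\neq v$ (and $u,v\notin P$) one gets $\im(\beta)\cap\dom(\gamma)=P$, whence $\dom(\beta\gamma)=\beta^{-1}(P)=A$ and $a_i\beta\gamma=p_i\gamma=b_i=a_i\alpha$ for every $i$; thus $\alpha=\beta\gamma$, a product of two elements of $J_{k+1}$, as required.

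I expect the main obstacle to be precisely the construction of $P$. One cannot in general factor $\alpha$ as an idempotent of $J_{k+1}$ times a one-point extension of $\alpha$, because the rigidity of order-preserving maps may forbid extending $\alpha$ itself to rank $k+1$ at all: for instance $\transf{\{1,\ldots,k\}\\\{n-k+1,\ldots,n\}}$ admits no order-preserving extension of rank $k+1$. Routing the composition through a suitable intermediate $P$—sharing a gap with $A$ for the left factor and with $B$ for the right factor, while keeping the two surplus points $u\neq v$ from meeting—is exactly what circumvents this, and it is here that the hypothesis $n-k\geqslant 3$ is used.
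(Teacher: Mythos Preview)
Your argument is correct. The paper itself does not supply a proof of this lemma: it merely recalls that the statement was established inside the proof of \cite[Lemma~2.7]{Fernandes:2001}. Your factorisation $\alpha=\beta\gamma$ through an auxiliary $k$-set $P$ whose gap pattern is chosen to match a gap position of $A$ on the left and a gap position of $B$ on the right---so that the two surplus points $u\neq v$ get killed in the composite---is a clean, self-contained argument filling in what the paper only cites. One minor remark: your construction of $P$ actually goes through as soon as $n-k\geqslant 2$ (in both cases $s\neq t$ and $s=t$ the required gap sizes sum to~$2$), so the bound $k\leqslant n-3$ is slightly stronger than your method needs; but that is the hypothesis stated in the lemma, and the paper treats the border case $k=n-2$ separately (Lemmas~\ref{n2c}--\ref{n2b}) precisely because there the factors must be shown to lie in $\AO_n$, not merely in $\POI_n$.
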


As a consequence of the last lemma, we conclude that every element of $J_k$ is a product of elements of $J_{n-2}$, for  $0\leqslant k\leqslant n-2$. 

\begin{lemma}\label{n-1}
$J_{n-1}^\mathscr{o}\cup J_{n-1}^\mathscr{e}\subseteq\langle x_1,x_2,\ldots,x_n\rangle$. 
\end{lemma}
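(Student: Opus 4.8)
The plan is to view each rank-$(n-1)$ order-preserving map through its pair of gaps and then reduce the statement to a purely combinatorial claim about a single cyclic permutation. First I would record two elementary facts. By Proposition \ref{chAO} together with the uniqueness of order-preserving partial permutations with prescribed domain and image, the set $J_{n-1}^\mathscr{o}\cup J_{n-1}^\mathscr{e}$ consists exactly of the maps $\transf{X_i\\X_j}$ with $1\leqslant i,j\leqslant n$ and $i\equiv j\mod2$; indeed such a map is uniquely determined by its domain gap $\gd=i$ and its image gap $\gi=j$. Secondly, if $\im(\alpha)=\dom(\beta)$, then $\alpha\beta$ is again order-preserving with $\dom(\alpha\beta)=\dom(\alpha)$ and $\im(\alpha\beta)=\im(\beta)$; hence the gaps compose, that is, $\transf{X_i\\X_j}\transf{X_j\\X_k}=\transf{X_i\\X_k}$.

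With this in hand, I would interpret each generator $x_i$ as an arrow from its domain gap to its image gap: by construction $\gd(x_i)=i$ and $\gi(x_i)=i-2$ for $3\leqslant i\leqslant n$, while $x_1$ and $x_2$ supply the two ``wrap-around'' arrows whose target gaps are $n$ or $n-1$ according to the parity of $n$. The key step is then to verify that, on each of the two parity classes of $\{1,\ldots,n\}$, these arrows realize a single cyclic permutation $\sigma$ that visits every state of that class exactly once. Concretely, for $n$ odd the odd states are run through by $x_1\colon 1\to n$, $x_n\colon n\to n-2,\ \ldots,\ x_3\colon 3\to 1$ (the cycle $(1\ n\ n-2\ \cdots\ 3)$), and the even states by $x_2\colon 2\to n-1$, $x_{n-1}\colon n-1\to n-3,\ \ldots,\ x_4\colon 4\to 2$; for $n$ even the target gaps of $x_1$ and $x_2$ change to $n-1$ and $n$ respectively, and one checks in the same way that each parity class is again traced out by a single cycle.

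Finally, I would combine the two observations. Given $i\equiv j\mod2$, set $a_0=i$ and follow $\sigma$, forming the product $x_{a_0}x_{a_1}\cdots x_{a_{m-1}}$ with $a_{t+1}=\sigma(a_t)$, where $m$ is chosen so that $\sigma^m(i)=j$ (a full loop, of length equal to the size of the parity class, when $i=j$). Each factor satisfies $\im(x_{a_t})=X_{\sigma(a_t)}=X_{a_{t+1}}=\dom(x_{a_{t+1}})$, so by the composition law the product telescopes to $\transf{X_i\\X_{\sigma^m(i)}}=\transf{X_i\\X_j}$; since $\sigma$ is a single cycle on the parity class, such an $m$ always exists. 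As $i$ and $j$ were arbitrary of the same parity, every element of $J_{n-1}^\mathscr{o}\cup J_{n-1}^\mathscr{e}$ is obtained, which proves the inclusion. I expect the only genuine work to lie in the second paragraph, namely confirming that the wrap-around generators $x_1,x_2$ close both cycles correctly; the parity of $n$ must be tracked with care there, since it governs simultaneously the target gaps of $x_1,x_2$ and the condition $i\equiv j\mod2$ attached to each arrow. The composition law and the telescoping are then routine once this cyclic picture is established.
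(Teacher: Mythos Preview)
Your proposal is correct and follows essentially the same approach as the paper: both arguments express each $\transf{X_i\\X_j}$ as a product $x_{a_0}x_{a_1}\cdots$ obtained by tracing the gap from $i$ to $j$ through the chain $i\mapsto i-2\mapsto\cdots$ with a single wrap-around via $x_1$ or $x_2$. The paper spells out the resulting products through an explicit case analysis on the relative positions of $i,j$ and the parity of $n$, whereas you package the same computation more conceptually as following a single cycle $\sigma$ on each parity class; the content is identical.
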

\begin{proof}
Take $\alpha\in J_{n-1}^\mathscr{o}\cup J_{n-1}^\mathscr{e}$. 
Then, $\alpha=\transf{X_i\\X_j}$, for some $1\leqslant i,j\leqslant n$ with the same parity. 

If $j<i$, then $j+2\leqslant i$ and $\alpha=x_ix_{i-2}\cdots x_{j+2}$. 

Next, suppose that $i\leqslant j$ and $i,j$ are odd. 
If $j\geqslant n-1$, then $\alpha=x_ix_{i-2}\cdots x_1$.
If $j\leqslant n-2$, then $\alpha=x_ix_{i-2}\cdots x_1x_nx_{n-2}\cdots x_{j+2}$, if $n$ is odd, 
and  $\alpha=x_ix_{i-2}\cdots x_1x_{n-1}x_{n-3}\cdots x_{j+2}$, if $n$ is even. 

Finally, suppose that $i\leqslant j$ and $i,j$ are even. 
If $j\geqslant n-1$, then $\alpha=x_ix_{i-2}\cdots x_2$.
If $j\leqslant n-2$, then $\alpha=x_ix_{i-2}\cdots x_2x_{n-1}x_{n-3}\cdots x_{j+2}$, if $n$ is odd, 
and  $\alpha=x_ix_{i-2}\cdots x_2x_nx_{n-2}\cdots x_{j+2}$, if $n$ is even. 
\end{proof} 

\begin{lemma}\label{n2c}
Let $\alpha\in J_{n-2}$ be such that $\dom(\alpha)=\Omega_n\setminus\{1,2i\}$ and $\im(\alpha)=\Omega_n\setminus\{1,2j+1\}$,  
for some $1\leqslant i\leqslant \lfloor\frac{n}{2}\rfloor$ and $1\leqslant j\leqslant \lfloor\frac{n-1}{2}\rfloor$. 
Then, there exist $\beta\in J_{n-1}^\mathscr{e}$ and $\gamma\in J_{n-1}^\mathscr{o}$ such that $\alpha=\beta\gamma$. 
\end{lemma}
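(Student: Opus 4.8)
The plan is to exploit two structural facts about $\POI_n$: a product of order-preserving partial permutations is again order-preserving, and, as noted in the preliminaries, an order-preserving partial permutation is uniquely determined by its domain and image. Consequently, to establish $\alpha=\beta\gamma$ it will suffice to produce order-preserving $\beta,\gamma$ of the prescribed $\mathscr{J}$-classes and to verify \emph{only} that $\dom(\beta\gamma)=\dom(\alpha)$ and $\im(\beta\gamma)=\im(\alpha)$; since $\alpha$ is itself order-preserving, the equality of the maps then follows for free. This reduces the whole lemma to a bookkeeping computation with domains, images and gaps.

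First I would pin down the forced shapes. As $\dom(\beta\gamma)\subseteq\dom(\beta)$ with $|\dom(\beta)|=n-1$, we must have $\dom(\beta)=\Omega_n\setminus\{2i\}$ or $\Omega_n\setminus\{1\}$, and the requirement that $\gd(\beta)$ be even (needed for $\beta\in J_{n-1}^\mathscr{e}$) forces $\dom(\beta)=\Omega_n\setminus\{2i\}$; dually $\im(\gamma)=\Omega_n\setminus\{2j+1\}$ or $\Omega_n\setminus\{1\}$. Matching the odd gap $2j+1$ of $\im(\alpha)$ then leads me to set
$$
\beta=\transf{\Omega_n\setminus\{2i\}\\\Omega_n\setminus\{2\}}\in\POI_n
\quad\text{and}\quad
\gamma=\transf{\Omega_n\setminus\{1\}\\\Omega_n\setminus\{2j+1\}}\in\POI_n.
$$
Since $\gd(\beta)=2i$ and $\gi(\beta)=2$ are both even, Proposition \ref{chAO} gives $\beta\in\AO_n$, and as $\gd(\beta)$ is even we obtain $\beta\in J_{n-1}^\mathscr{e}$. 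Likewise $\gd(\gamma)=1$ and $\gi(\gamma)=2j+1$ are both odd, so $\gamma\in J_{n-1}^\mathscr{o}$.

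It then remains to compute the composite. Because $\beta$ is order-preserving with $1=\min\dom(\beta)$ and $1=\min\im(\beta)$, we have $1\beta=1$, so the unique element of $\dom(\beta)$ sent outside $\dom(\gamma)=\Omega_n\setminus\{1\}$ is $1$ itself; hence $\dom(\beta\gamma)=\dom(\beta)\setminus\{1\}=\Omega_n\setminus\{1,2i\}=\dom(\alpha)$. For the image, $\im(\beta)\cap\dom(\gamma)=\Omega_n\setminus\{1,2\}=\dom(\gamma)\setminus\{2\}$, and since $\gamma$ is order-preserving with $2=\min\dom(\gamma)$ mapped to $1=\min\im(\gamma)$, we get $\im(\beta\gamma)=\im(\gamma)\setminus\{2\gamma\}=\bigl(\Omega_n\setminus\{2j+1\}\bigr)\setminus\{1\}=\Omega_n\setminus\{1,2j+1\}=\im(\alpha)$. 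Thus $\alpha=\beta\gamma$, as required. The only delicate points, which I would state explicitly, are the two order-preserving incidences $1\beta=1$ and $2\gamma=1$: they determine precisely which point is lost from the domain and which from the image in forming $\beta\gamma$. I do not anticipate any obstacle beyond getting these two incidences right, as everything else is forced by the parity constraints on the gaps.
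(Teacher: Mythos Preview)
Your proof is correct and is essentially the same as the paper's: you choose exactly the same $\beta=\transf{\Omega_n\setminus\{2i\}\\\Omega_n\setminus\{2\}}$ and $\gamma=\transf{\Omega_n\setminus\{1\}\\\Omega_n\setminus\{2j+1\}}$ in $\POI_n$, and then conclude $\alpha=\beta\gamma$ from the equality of domains and images. The paper simply writes out $\beta$ and $\gamma$ element by element and asserts the domain/image equalities, whereas you supply the explicit justifications $1\beta=1$ and $2\gamma=1$; the substance is identical.
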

\begin{proof}
Let 
$
\beta=\transf{1&2&\cdots&2i-1&2i+1&\cdots&n\\1&3&\cdots&2i&2i+1&\cdots&n}\in J_{n-1}^\mathscr{e}
$
and 
$
\gamma=\transf{2&\cdots&2j+1&2j+2&\cdots&n\\1&\cdots&2j&2j+2&\cdots&n} \in J_{n-1}^\mathscr{o}. 
$
Then, $\dom(\beta\gamma)=\dom(\alpha)$ and $\im(\beta\gamma)=\im(\alpha)$, whence $\alpha=\beta\gamma$,
as required. 
\end{proof}

\begin{lemma}\label{n2d2}
Let $\alpha\in J_{n-2}$ be such that $1\not\in\dom(\alpha)\cup\im(\alpha)$.  
Then, there exists $\beta,\gamma\in J_{n-1}^\mathscr{o}\cup  J_{n-1}^\mathscr{e}$ such that $\alpha=\beta\gamma$. 
\end{lemma}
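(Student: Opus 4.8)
Write $\dom(\alpha)=\Omega_n\setminus\{1,a\}$ and $\im(\alpha)=\Omega_n\setminus\{1,b\}$ with $2\leqslant a,b\leqslant n$, which is exactly what the hypothesis $1\notin\dom(\alpha)\cup\im(\alpha)$ allows. My plan is to exhibit, case by case according to the parities of $a$ and $b$, two explicit order-preserving partial permutations $\beta,\gamma$ of rank $n-1$ lying in $\AO_n$ with $\alpha=\beta\gamma$. Since a product of order-preserving partial permutations is again order-preserving, and an order-preserving partial permutation is uniquely determined by its domain and image, it suffices in each case to check the two set equalities $\dom(\beta\gamma)=\dom(\alpha)$ and $\im(\beta\gamma)=\im(\alpha)$; membership of $\beta$ and $\gamma$ in $\AO_n$ then follows from Proposition \ref{chAO} by verifying that each has its two gaps of equal parity. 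I will repeatedly use that $\dom(\beta\gamma)$ is $\dom(\beta)$ with the unique point sent by $\beta$ to $\gd(\gamma)$ removed, and dually for the image, so that the product drops to rank $n-2$ precisely when $\gi(\beta)\neq\gd(\gamma)$.

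When $a$ and $b$ have the same parity, I would take $\beta=\id_{\Omega_n\setminus\{1\}}$ and $\gamma=\transf{\Omega_n\setminus\{a\}\\\Omega_n\setminus\{b\}}$. Then $\gd(\beta)=\gi(\beta)=1$, while $\gd(\gamma)=a$ and $\gi(\gamma)=b$ have the same parity, so both factors lie in $\AO_n$ by Proposition \ref{chAO} and both have rank $n-1$; moreover $\gi(\beta)=1\neq a=\gd(\gamma)$, so the product does drop to rank $n-2$. Since $\beta$ acts as the identity on $\{2,\dots,n\}$, one checks at once that $\dom(\beta\gamma)=\{2,\dots,n\}\setminus\{a\}=\dom(\alpha)$ and $\im(\beta\gamma)=\im(\gamma)\setminus\{1\}=\Omega_n\setminus\{1,b\}=\im(\alpha)$, as required.

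For the mixed-parity cases the diagonal factorization above is unavailable, because $\transf{\Omega_n\setminus\{a\}\\\Omega_n\setminus\{b\}}\notin\AO_n$ when $a\not\equiv b\mod 2$, so instead I would route the collapse through an intermediate gap. If $a$ is even and $b$ is odd, then $\dom(\alpha)$ and $\im(\alpha)$ have exactly the form required by Lemma \ref{n2c}, which already produces $\beta\in J_{n-1}^\mathscr{e}$ and $\gamma\in J_{n-1}^\mathscr{o}$ with $\alpha=\beta\gamma$. If $a$ is odd and $b$ is even, I would take $\beta=\transf{\Omega_n\setminus\{a\}\\\Omega_n\setminus\{1\}}$ and $\gamma=\transf{\Omega_n\setminus\{2\}\\\Omega_n\setminus\{b\}}$. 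Here $\gd(\beta)=a$ and $\gi(\beta)=1$ are both odd, while $\gd(\gamma)=2$ and $\gi(\gamma)=b$ are both even, so $\beta\in J_{n-1}^\mathscr{o}$ and $\gamma\in J_{n-1}^\mathscr{e}$ by Proposition \ref{chAO}; and since $1\beta=2=\gd(\gamma)$, the point $1$ is the one lost from the domain, giving $\dom(\beta\gamma)=\dom(\beta)\setminus\{1\}=\Omega_n\setminus\{1,a\}$, while the image misses both $\gi(\gamma)=b$ and the value $1\gamma=1$ of the unique point of $\dom(\gamma)$ not met by $\beta$, giving $\im(\beta\gamma)=\Omega_n\setminus\{1,b\}$.

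The main obstacle is the bookkeeping in the last case: one must simultaneously keep both $\beta$ and $\gamma$ of full rank $n-1$ (which forces $\gi(\beta)\neq\gd(\gamma)$), arrange that the single point collapsed in the product is the intended one on each side (so that $\dom(\beta\gamma)$ and $\im(\beta\gamma)$ lose exactly $1,a$ and $1,b$ respectively), and respect the equal-parity gap condition of Proposition \ref{chAO} for both factors at once. It is this last requirement that rules out a single uniform factorization and forces the split into parity cases; the intermediate gaps $1$ and $2$ are chosen precisely so that each of $\beta$ and $\gamma$ has its two gaps of a common parity.
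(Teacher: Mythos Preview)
Your proof is correct and follows essentially the same approach as the paper: a parity case-split, with the same-parity case handled via an identity factor (the paper uses $\beta=\transf{X_i\\X_j}$ and $\gamma=\id_{X_1}$; you simply swap the order of the two factors) and the case $a$ even, $b$ odd handled by Lemma~\ref{n2c}. The only minor difference is that for $a$ odd and $b$ even the paper applies Lemma~\ref{n2c} to $\alpha^{-1}$ and then inverts, whereas you give a direct explicit factorization through the intermediate gaps $1$ and $2$; both arguments are equally short.
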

\begin{proof}
Let $2\leqslant i,j\leqslant n$ be such that $\dom(\alpha)=\Omega_n\setminus\{1,i\}$ and $\im(\alpha)=\Omega_n\setminus\{1,j\}$. 

First, suppose that $i$ and $j$ have the same parity. 
Let $\beta\in\POI_n$ be such that $\dom(\beta)=\Omega_n\setminus\{i\}$ and $\im(\beta)=\Omega_n\setminus\{j\}$ 
and let $\gamma=\id_{X_1}$. Then,  $\beta,\gamma\in J_{n-1}^\mathscr{o}\cup  J_{n-1}^\mathscr{e}$ and $\alpha=\beta\gamma$. 

Secondly, suppose that $i$ and $j$ have distinct parities. If $i$ is even and $j$ is odd, by Lemma \ref{n2c}, 
there exist $\beta\in J_{n-1}^\mathscr{e}$ and $\gamma\in J_{n-1}^\mathscr{o}$ such that $\alpha=\beta\gamma$. 
On the other hand, if $j$ is even and $i$ is odd, by Lemma \ref{n2c}, 
there exist $\beta\in J_{n-1}^\mathscr{e}$ and $\gamma\in J_{n-1}^\mathscr{o}$ such that $\alpha^{-1}=\beta\gamma$, 
whence $\alpha=\gamma^{-1}\beta^{-1}$, with $\beta^{-1}\in J_{n-1}^\mathscr{e}$ and $\gamma^{-1}\in J_{n-1}^\mathscr{o}$, 
as required.
\end{proof}

\begin{lemma}\label{n2a}
Let $\alpha\in J_{n-2}$ be such that  
both elements of $\Omega_n\setminus\im(\alpha)$ are even numbers. 
Then, there exists $\beta\in J_{n-1}^\mathscr{e}$ such that $\alpha\beta\in J_{n-2}$ 
and $\Omega_n\setminus\im(\alpha\beta)$ contains an odd number. 
\end{lemma}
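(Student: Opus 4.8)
The plan is to produce a single explicit $\beta$ and to verify the three required properties directly, rather than arguing existentially. Write $\Omega_n\setminus\im(\alpha)=\{p,q\}$ with $p<q$, both even by hypothesis, and take $\beta=\transf{\Omega_n\setminus\{p\}\\\Omega_n\setminus\{q\}}$, that is, the unique order-preserving partial permutation with $\dom(\beta)=\Omega_n\setminus\{p\}$ and $\im(\beta)=\Omega_n\setminus\{q\}$. The point of this choice is that putting the domain gap of $\beta$ at one of the two (even) image-gaps of $\alpha$ keeps the rank of the product at $n-2$, while the image gap of $\beta$ is placed so as to force a parity change.

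First I would check $\beta\in J_{n-1}^\mathscr{e}$. By construction $\gd(\beta)=p$ and $\gi(\beta)=q$, which are both even and hence of the same parity, so Proposition \ref{chAO} gives $\beta\in\AO_n$; since $\gd(\beta)=p$ is even, $\beta\in J_{n-1}^\mathscr{e}$. Next, since $\im(\alpha)=\Omega_n\setminus\{p,q\}\subseteq\Omega_n\setminus\{p\}=\dom(\beta)$, no point of $\dom(\alpha)$ is discarded when forming the product, so $\dom(\alpha\beta)=\dom(\alpha)$ and $\alpha\beta$ again has rank $n-2$, i.e. $\alpha\beta\in J_{n-2}$. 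It then remains to locate the two gaps of $\im(\alpha\beta)=\im(\alpha)\beta$. Because $q$ is the unique element of $\dom(\beta)$ not lying in $\im(\alpha)$, we have $\im(\alpha\beta)=\im(\beta)\setminus\{q\beta\}$, so the gaps of $\im(\alpha\beta)$ are exactly $\gi(\beta)=q$ and $q\beta$.

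The only computation that needs care is the value $q\beta$, and I expect this positional count to be the sole non-routine step. Since $\beta$ is the order-preserving bijection from $\Omega_n\setminus\{p\}$ onto $\Omega_n\setminus\{q\}$ with $p<q$, it fixes every element below $p$ and sends each element of $\{p+1,\dots,q\}$ to its predecessor; in particular $q\beta=q-1$, which is odd. (Concretely, $q$ sits in position $q-1$ of $\dom(\beta)$, and the $(q-1)$-th smallest element of $\Omega_n\setminus\{q\}$ is $q-1$.) Consequently $\Omega_n\setminus\im(\alpha\beta)=\{q-1,q\}$ contains the odd number $q-1$, which gives the conclusion.
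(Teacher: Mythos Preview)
Your proof is correct and follows essentially the same approach as the paper's: both construct the required $\beta$ explicitly as the order-preserving partial permutation whose domain gap is one of the two even gaps of $\im(\alpha)$ and whose image gap is the other. The only difference is cosmetic: writing $\{p,q\}=\{2i,2j\}$ with $p<q$, the paper takes $\gd(\beta)=2j$ and $\gi(\beta)=2i$, obtaining $\Omega_n\setminus\im(\alpha\beta)=\{2i,2i+1\}$, whereas you take $\gd(\beta)=p$ and $\gi(\beta)=q$, obtaining $\Omega_n\setminus\im(\alpha\beta)=\{q-1,q\}$; in each case the odd neighbour of one of the original gaps appears.
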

\begin{proof}
Let $1\leqslant i<j\leqslant \lfloor\frac{n}{2}\rfloor$ be such that $\im(\alpha)=\Omega_n\setminus\{2i,2j\}$. 
Let 
$
\beta=\transf{1&\cdots&2i-1&2i &2i+1&\cdots&2j-1&2j+1&\cdots&n\\1&\cdots&2i-1&2i+1&2i+2&\cdots&2j&2j+1&\cdots&n}. 
$
Then, $\beta\in J_{n-1}^\mathscr{e}$ and $\im(\alpha\beta)=\Omega_n\setminus\{2i,2i+1\}$, as required. 
\end{proof}

\begin{lemma}\label{n2b}
Let $\alpha\in J_{n-2}$ be such that 
$\Omega_n\setminus\im(\alpha)$ contains an odd number. 
Then, there exists $\beta\in J_{n-1}^\mathscr{o}$ such that $\alpha\beta\in J_{n-2}$ and $1\not\in\im(\alpha\beta)$. 
\end{lemma}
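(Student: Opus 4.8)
The plan is to construct $\beta$ directly from the two-element complement of $\im(\alpha)$. Write $\Omega_n\setminus\im(\alpha)=\{a,b\}$ and, using the hypothesis that this set contains an odd number, fix the labelling so that $a$ is odd. I would then let $\beta$ be the (unique) order-preserving partial permutation of $\Omega_n$ with $\dom(\beta)=\Omega_n\setminus\{a\}$ and $\im(\beta)=\Omega_n\setminus\{1\}$; such a $\beta$ exists by the observation recorded in the Preliminaries, since both sets have size $n-1$. By construction $\gd(\beta)=a$ and $\gi(\beta)=1$ are both odd, so Proposition \ref{chAO} gives $\beta\in\AO_n$, and as $\gd(\beta)$ is odd we conclude $\beta\in J_{n-1}^\mathscr{o}$.

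It then remains to verify the two asserted properties of $\alpha\beta$. Since $\gd(\beta)=a\notin\im(\alpha)$, we have $\im(\alpha)\subseteq\dom(\beta)$; hence every $x\in\dom(\alpha)$ satisfies $x\alpha\in\im(\alpha)\subseteq\dom(\beta)$, so $\dom(\alpha\beta)=\dom(\alpha)$ and, by injectivity, $|\im(\alpha\beta)|=|\dom(\alpha)|=n-2$. Thus $\alpha\beta\in J_{n-2}$. Finally, $\im(\alpha\beta)\subseteq\im(\beta)=\Omega_n\setminus\{1\}$, so $1\notin\im(\alpha\beta)$, as required. Both verifications are routine once $\beta$ is in hand.

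There is no serious obstacle here; the only point requiring care is reconciling the three simultaneous demands on $\beta$. The condition $\alpha\beta\in J_{n-2}$ forces $\gd(\beta)$ to lie in $\Omega_n\setminus\im(\alpha)$ (otherwise the rank drops to $n-3$); membership in $J_{n-1}^\mathscr{o}$ forces both $\gd(\beta)$ and $\gi(\beta)$ to be odd; and $1\notin\im(\alpha\beta)$ is secured by taking $\gi(\beta)=1$, which is itself odd and therefore compatible with the parity requirement. It is precisely the hypothesis that $\Omega_n\setminus\im(\alpha)$ contains an odd number that lets us pick $\gd(\beta)$ odd inside that complement, so the construction goes through in every case (including the degenerate possibility $a=1$, where $\beta=\id_{\Omega_n\setminus\{1\}}$).
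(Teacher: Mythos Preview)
Your proof is correct and takes essentially the same approach as the paper: both choose the odd element $a=2i+1$ in $\Omega_n\setminus\im(\alpha)$ and let $\beta$ be the unique order-preserving partial permutation with $\dom(\beta)=\Omega_n\setminus\{a\}$ and $\im(\beta)=\Omega_n\setminus\{1\}$. The paper simply displays this $\beta$ explicitly and checks the same two containments $\im(\alpha)\subseteq\dom(\beta)$ and $1\notin\im(\beta)$ that you spell out.
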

\begin{proof}
By hypothesis, there exists $0\leqslant i\leqslant \lfloor\frac{n-1}{2}\rfloor$ such that $2i+1\not\in\im(\alpha)$. 
Let 
$
\beta=\transf{1&\cdots&2i&2i+2&n\\2&\cdots&2i+1&2i+2&n}. 
$
Then, $\beta\in J_{n-1}^\mathscr{o}$ and, as $\im(\alpha)\subseteq\dom(\beta)$ and $1\not\in\im(\beta)$, 
we have $\alpha\beta\in J_{n-2}$ and $1\not\in\im(\alpha\beta)$, as required. 
\end{proof}

\begin{proposition}\label{genAO}
The set $\{x_1,x_2,\ldots,x_n\}$ generates the monoid $\AO_n$. 
\end{proposition}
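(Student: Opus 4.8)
The plan is to prove $\AO_n = \langle x_1, x_2, \ldots, x_n\rangle$ by peeling off the $\mathscr{J}$-classes from the top downwards, using the preceding lemmas to express each layer in terms of the one above it. Write $T = \langle x_1, x_2, \ldots, x_n\rangle$. Since each $x_i$ has rank $n-1$, Lemma \ref{n-1} gives $J_{n-1}^\mathscr{o} \cup J_{n-1}^\mathscr{e} \subseteq T$, and this union is precisely the set of all rank $n-1$ elements of $\AO_n$; in particular $T$ is closed under taking inverses of its rank $n-1$ elements, because the inverse of such an element is again a rank $n-1$ element of $\AO_n$ (the parity of $\gd$ and $\gi$ is merely swapped, and they already coincide by Proposition \ref{chAO}). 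The unit $\id_n$ lies in $T$ trivially. Thus it remains to capture the elements of rank at most $n-2$, and everything of interest reduces to the class $J_{n-2}$.

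The crux is therefore to write an arbitrary $\alpha \in J_{n-2}$ as a product of rank $n-1$ elements of $\AO_n$, each of which lies in $T$ by the above. First I would clear $1$ from the image. If $\Omega_n\setminus\im(\alpha)$ already contains an odd number, I apply Lemma \ref{n2b} directly; otherwise I first apply Lemma \ref{n2a} to manufacture an odd gap and then Lemma \ref{n2b}. In either case I obtain a product $p$ of rank $n-1$ elements with $\alpha p\in J_{n-2}$ and $1\notin\im(\alpha p)$. Since $\im(\alpha)\subseteq\dom(p)$ in these lemmas, $pp^{-1}$ restricts to the identity on $\im(\alpha)$, so $\alpha=(\alpha p)p^{-1}$, with $p^{-1}$ again a product of rank $n-1$ elements. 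Setting $\alpha'=\alpha p$, I have reduced to an element whose image avoids $1$.

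To clear $1$ from the domain as well, I would exploit that $\AO_n$ is an inverse monoid: applying the very same image reduction to $(\alpha')^{-1}$ yields a product $q$ of rank $n-1$ elements with $\delta:=(\alpha')^{-1}q\in J_{n-2}$, $1\notin\im(\delta)$, and $(\alpha')^{-1}=\delta q^{-1}$, that is, $\alpha'=q\delta^{-1}$. Now $\dom(\delta^{-1})=\im(\delta)$ and $\im(\delta^{-1})=\dom(\delta)=\im(\alpha')$ both avoid $1$, so Lemma \ref{n2d2} factors $\delta^{-1}$ into two rank $n-1$ elements. Combining, $\alpha=\alpha'p^{-1}=q\,\delta^{-1}\,p^{-1}$ is a product of rank $n-1$ elements, whence $J_{n-2}\subseteq T$.

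Finally, the consequence of Lemma \ref{n-2} records that every element of $J_k$, for $0\leqslant k\leqslant n-2$, is a product of elements of $J_{n-2}$; since $J_{n-2}\subseteq T$ and $T$ is closed under products, all the lower $\mathscr{J}$-classes lie in $T$ too. Together with the unit and the rank $n-1$ layer, this yields $\AO_n\subseteq T$, and the reverse inclusion is immediate, so $\AO_n=T$. The main obstacle is concentrated entirely in the $J_{n-2}$ step: the available lemmas only reduce the image, and the essential idea is to convert an image reduction into a domain reduction by passing to inverses, so that Lemma \ref{n2d2}, which requires both the domain and the image to avoid $1$, becomes applicable.
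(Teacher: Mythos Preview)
Your proposal is correct and follows essentially the same approach as the paper's own proof: reduce to showing $J_{n-2}\subseteq\langle J_{n-1}^\mathscr{o}\cup J_{n-1}^\mathscr{e}\rangle$, use Lemmas~\ref{n2a} and~\ref{n2b} to force $1\notin\im$, then pass to the inverse to force $1\notin\dom$, and finally invoke Lemma~\ref{n2d2}. The only cosmetic difference is that the paper first checks whether $1\notin\dom(\alpha')$ already holds (in which case Lemma~\ref{n2d2} applies directly to $\alpha'$) before resorting to the inverse trick, whereas you apply the inverse reduction uniformly; your version is slightly more streamlined and equally valid.
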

\begin{proof}
In view of Lemmas \ref{n-2} and \ref{n-1}, it suffices to show that $J_{n-2}\subseteq \langle J_{n-1}^\mathscr{o}\cup J_{n-1}^\mathscr{e}\rangle$. 
Let $\alpha\in J_{n-2}$. 

If both elements of $\Omega_n\setminus\im(\alpha)$ are even numbers, by Lemma \ref{n2a}, 
there is $\beta_1\in J_{n-1}^\mathscr{e}$ such that $\alpha\beta_1\in J_{n-2}$ 
and $\Omega_n\setminus\im(\alpha\beta_1)$ contains an odd number. 
On the other hand, if $\Omega_n\setminus\im(\alpha)$ contains an odd number, then fix any $x\in \Omega_n\setminus\im(\alpha)$ and 
take $\beta_1=\id_{\im(\alpha)\cup\{x\}}\in J_{n-1}^\mathscr{o}\cup J_{n-1}^\mathscr{e}$. 
In this last case, $\alpha=\alpha\beta_1$ and so we also have that $\alpha\beta_1\in J_{n-2}$ 
and $\Omega_n\setminus\im(\alpha\beta_1)$ contains an odd number. 
Therefore, in either case, by Lemma \ref{n2b}, there exists $\beta_2\in J_{n-1}^\mathscr{o}$ 
such that $\alpha\beta_1\beta_2\in J_{n-2}$ and $1\not\in\im(\alpha\beta_1\beta_2)$. 

Suppose that $1\not\in\dom(\alpha\beta_1\beta_2)$. Then, by Lemma \ref{n2d2}, 
we have $\alpha\beta_1\beta_2\in\langle J_{n-1}^\mathscr{o}\cup J_{n-1}^\mathscr{e}\rangle$. 
Since $\alpha,\alpha\beta_1\beta_2\in J_{n-2}$, 
then $\alpha=(\alpha\beta_1\beta_2)\beta_2^{-1}\beta_1^{-1}$, with $\beta_1^{-1},\beta_2^{-1}\in J_{n-1}^\mathscr{o}\cup J_{n-1}^\mathscr{e}$, 
and so $\alpha\in\langle J_{n-1}^\mathscr{o}\cup J_{n-1}^\mathscr{e}\rangle$. 

On the other hand, suppose that $1\in\dom(\alpha\beta_1\beta_2)$ and let $\beta=(\alpha\beta_1\beta_2)^{-1}$. Since $\beta\in J_{n-2}$, we can apply Lemmas 
\ref{n2a} and \ref{n2b} to $\beta$ in a similar way to what we did for $\alpha$ above and thus take $\gamma_1,\gamma_2 \in J_{n-1}^\mathscr{o}\cup J_{n-1}^\mathscr{e}$ 
such that $\beta\gamma_1\gamma_2\in J_{n-2}$ and $1\not\in\im(\beta\gamma_1\gamma_2)$. 

If $1\in\dom(\beta\gamma_1\gamma_2)$, then $1\in\dom(\beta)$, whence $1\in\im(\alpha\beta_1\beta_2)$, which is a contradiction. 
So, $1\not\in\dom(\beta\gamma_1\gamma_2)$ and then, by Lemma \ref{n2d2}, 
we get $\beta\gamma_1\gamma_2\in\langle J_{n-1}^\mathscr{o}\cup J_{n-1}^\mathscr{e}\rangle$. 
Since $\beta,\beta\gamma_1\gamma_2\in J_{n-2}$, 
then $\beta=(\beta\gamma_1\gamma_2)\gamma_2^{-1}\gamma_1^{-1}$, 
with $\gamma_1^{-1},\gamma_2^{-1}\in J_{n-1}^\mathscr{o}\cup J_{n-1}^\mathscr{e}$. 
Therefore, $\beta\in\langle J_{n-1}^\mathscr{o}\cup J_{n-1}^\mathscr{e}\rangle$, 
whence $\alpha\beta_1\beta_2=\beta^{-1}\in  \langle J_{n-1}^\mathscr{o}\cup J_{n-1}^\mathscr{e}\rangle$ and so, as above, 
we may deduce that $\alpha=(\alpha\beta_1\beta_2)\beta_2^{-1}\beta_1^{-1}\in \langle J_{n-1}^\mathscr{o}\cup J_{n-1}^\mathscr{e}\rangle$, 
as required. 
\end{proof}

Since the group of units of $\AO_n$ is trivial, every generating set of $\AO_n$ must have at least one element whose domain is $X_i$, for all $1\leqslant i\leqslant n$, 
and so must contain at least $n$ elements. Thus, in view of Proposition \ref{genAO}, we have immediately the following result. 

\begin{theorem}\label{rkAO}
The monoid $\AO_n$ has rank $n$. 
\end{theorem}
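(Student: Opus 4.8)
The plan is to establish the two inequalities $\rank(\AO_n)\leqslant n$ and $\rank(\AO_n)\geqslant n$ separately. The upper bound is immediate from Proposition \ref{genAO}: since $\{x_1,x_2,\ldots,x_n\}$ generates $\AO_n$, the monoid admits a generating set of size $n$, whence $\rank(\AO_n)\leqslant n$. All the work therefore lies in the lower bound, which is exactly what the paragraph preceding the statement asserts and which I would now justify carefully.

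For the lower bound, the structural facts I would exploit are that in $\AO_n$ the only element of full domain is $\id_n$ (the group of units $J_n=\{\id_n\}$ being trivial) and that, for any product $\gamma_1\gamma_2\cdots\gamma_m$ in $\I_n$, one has $\dom(\gamma_1\cdots\gamma_m)\subseteq\dom(\gamma_1)$. First I would observe that, for each $1\leqslant i\leqslant n$, there is an element of $\AO_n$ of rank $n-1$ whose domain equals $X_i=\Omega_n\setminus\{i\}$: by Proposition \ref{chAO} it suffices to choose $j$ with the same parity as $i$ and take $\transf{X_i\\X_j}$.

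Next, let $X$ be any generating set of $\AO_n$ and fix $i$. Writing this element of domain $X_i$ as a product $\gamma_1\cdots\gamma_m$ of elements of $X$, I would discard any leading factors equal to $\id_n$---which is possible since the product has rank $n-1$ and so is not $\id_n$---in order to arrange that the first factor $\gamma_1$ is not the identity. Since the only element of $\AO_n$ with full domain is $\id_n$, this forces $|\dom(\gamma_1)|\leqslant n-1$; combined with $X_i=\dom(\gamma_1\cdots\gamma_m)\subseteq\dom(\gamma_1)$ and $|X_i|=n-1$, we obtain $\dom(\gamma_1)=X_i$. Hence $X$ contains, for every $i$, an element of domain exactly $X_i$. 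As the sets $X_1,\ldots,X_n$ are pairwise distinct, these $n$ elements of $X$ are pairwise distinct, so $|X|\geqslant n$ and thus $\rank(\AO_n)\geqslant n$.

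Combining the two bounds yields $\rank(\AO_n)=n$. There is no real obstacle here beyond the bookkeeping in the lower-bound argument; the only point requiring a moment's care is the elimination of identity factors, which is what guarantees that the first surviving factor genuinely has domain $X_i$ rather than the full set $\Omega_n$. Everything else is a direct consequence of Proposition \ref{genAO} and the triviality of the group of units.
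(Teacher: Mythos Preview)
Your proof is correct and follows the same approach as the paper: the upper bound from Proposition \ref{genAO}, and the lower bound from the triviality of the group of units forcing any generating set to contain an element with domain $X_i$ for each $i$. The paper compresses the lower-bound argument into a single sentence, while you spell out the details (eliminating leading identity factors and using $\dom(\gamma_1\cdots\gamma_m)\subseteq\dom(\gamma_1)$), but the idea is identical.
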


\section{Generators and rank of $\AM_n$}\label{rAM}

We begin this section by observing that, as $\AO_n\subseteq\AM_n$, we also have $x_1,\ldots,x_n\in\AM_n$. Furthermore,
$$
hx_ih=x_{n-i+3}^{-1},~\mbox{for $3\leqslant i\leqslant n$,} \quad 
hx_1h=\left\{
\begin{array}{ll}
x_1^{-1} & \mbox{if $n$ is odd}\\
x_2^{-1} & \mbox{if $n$ is even}
\end{array}\right.
\quad\text{and}\quad
hx_2h=\left\{
\begin{array}{ll}
x_2^{-1} & \mbox{if $n$ is odd}\\
x_1^{-1} & \mbox{if $n$ is even.}
\end{array}\right.
$$

Suppose that $n\equiv\{0,1\}\mod 4$. Hence, $h\in\AM_n$. 
Let $\alpha\in\AM_n\setminus\AO_n$. Then, $h\alpha\in\AI_n\cap\POI_n$, 
i.e. $h\alpha\in\AO_n$ and so $\alpha=h(h\alpha)\in\langle \AO_n,h\rangle$. 
Thus, we have:

\begin{lemma}\label{hgen}
If  $n\equiv\{0,1\}\mod 4$, then $\AM_n=\langle \AO_n,h\rangle$. 
\end{lemma}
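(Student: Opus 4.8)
The statement to prove is Lemma~\ref{hgen}: if $n\equiv\{0,1\}\mod 4$, then $\AM_n=\langle\AO_n,h\rangle$.

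The plan is to prove mutual containment. The inclusion $\langle\AO_n,h\rangle\subseteq\AM_n$ is immediate, since $\AO_n\subseteq\AM_n$ and, under the hypothesis $n\equiv\{0,1\}\mod 4$, the earlier observation that ``$h\in\AI_n$ if and only if $\lfloor\frac{n}{2}\rfloor$ is even if and only if $n\equiv 0\mod 4$ or $n\equiv 1\mod 4$'' gives $h\in\AM_n$; as $\AM_n$ is a submonoid, the subsemigroup it generates together with $\AO_n$ stays inside it.

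For the reverse inclusion, I would argue that every element of $\AM_n$ lies in $\langle\AO_n,h\rangle$. Let $\alpha\in\AM_n$. If $\alpha\in\AO_n$ there is nothing to do, so assume $\alpha\in\AM_n\setminus\AO_n$. The key idea is to use $h$ to convert $\alpha$ into an order-preserving transformation. Since $h$ is order-reversing and $\alpha$ is monotone but not order-preserving (hence order-reversing on its domain), the composite $h\alpha$ should be order-preserving. First I would confirm that $h\alpha\in\AI_n$: both $h\in\AI_n$ and $\alpha\in\AI_n$, and $\AI_n$ is closed under composition, so $h\alpha\in\AI_n$. Next I would check $h\alpha\in\POI_n$: because $\dom(h)=\Omega_n$, the domain of $h\alpha$ equals $\alpha$'s domain pulled back through $h$, and composing the order-reversing full permutation $h$ with the order-reversing $\alpha$ yields an order-preserving partial permutation. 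Hence $h\alpha\in\AI_n\cap\POI_n=\AO_n$.

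Having established $h\alpha\in\AO_n$, I would recover $\alpha$ by left-multiplying by $h$ again: since $h$ is an involution ($h^2=\id_n$, as $h$ is the order-reversing permutation of $\Omega_n$), we get $h(h\alpha)=\alpha$, so $\alpha=h(h\alpha)\in\langle\AO_n,h\rangle$. This is essentially the computation already sketched in the line preceding the lemma statement; my task is just to verify each monotonicity and membership claim carefully. The main obstacle, and the step deserving the most care, is the verification that $h\alpha$ is genuinely order-preserving for \emph{every} $\alpha\in\AM_n\setminus\AO_n$, including the low-rank elements: one must confirm that an order-reversing partial permutation (of any rank) composed on the left with the full order-reversing permutation $h$ always lands in $\POI_n$, which follows from the fact that the product of two order-reversing monotone maps is order-preserving on the relevant domain. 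Once that is pinned down, the rest is routine.
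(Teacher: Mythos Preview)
Your proposal is correct and follows exactly the same approach as the paper: the paper's proof is precisely the two-line observation that $h\in\AM_n$ under the hypothesis, and that for $\alpha\in\AM_n\setminus\AO_n$ one has $h\alpha\in\AI_n\cap\POI_n=\AO_n$, whence $\alpha=h(h\alpha)\in\langle\AO_n,h\rangle$. Your version simply spells out the monotonicity and membership checks in more detail.
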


On the other hand, we also have: 

\begin{lemma}\label{hrk01}
If  $n\equiv\{0,1\}\mod 4$, then $\rank(\AM_n)\geqslant \lceil\frac{n}{2}\rceil+1$. 
\end{lemma}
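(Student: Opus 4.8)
The plan is to bound the number of generators from below by separately accounting for the unique non-trivial unit $h$ and for the rank-$(n-1)$ elements needed to realise all possible domains. Let $X$ be any generating set of $\AM_n$. First I would observe that, since ranks are non-increasing along products, any product equal to an element of rank $n$ must consist entirely of units; as the only units are $\id_n$ and $h$ (here $Q_n=\{\id_n,h\}$ because $n\equiv\{0,1\}\mod 4$), the element $h$ can only be obtained as a product of copies of $\id_n$ and $h$, and hence $h\in X$. This contributes one generator, of rank $n$.

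Next I would prove a ``domain-descent'' claim: if $\gamma\in\AM_n$ has rank $n-1$ with $\dom(\gamma)=X_i$ and $\gamma=\alpha_1\cdots\alpha_k$ with each $\alpha_t\in X$, then some factor $\alpha_t$ has domain $X_i$ or $X_{n+1-i}$. Indeed, each factor has rank $n-1$ or $n$; let $\alpha_j$ be the first factor of rank $n-1$, so that $u=\alpha_1\cdots\alpha_{j-1}$ is a unit, i.e. $u\in\{\id_n,h\}$. Since $\dom(\gamma)\subseteq\dom(u\alpha_j)=\dom(\alpha_j)u^{-1}$ and both sets have size $n-1$, they coincide, so $\dom(\alpha_j)=\dom(\gamma)\,u=X_i\,u$. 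As $X_i\id_n=X_i$ and $X_ih=\Omega_n\setminus\{n+1-i\}=X_{n+1-i}$ (because $h$ sends $x$ to $n+1-x$), this gives $\dom(\alpha_j)\in\{X_i,X_{n+1-i}\}$, proving the claim. I would also note that for every $i$ there is a rank-$(n-1)$ element of $\AO_n\subseteq\AM_n$ with domain $X_i$ (take the order-preserving map whose image gap has the same parity as $i$, using Proposition \ref{chAO}), so the claim applies for each $i\in\{1,\dots,n\}$.

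Finally I would convert the claim into a count. Partition $\{1,\dots,n\}$ into the classes $\{i,n+1-i\}$; there are exactly $\lceil\frac{n}{2}\rceil$ of them. For each such class the claim forces $X$ to contain a rank-$(n-1)$ element whose domain lies in that class, and since an element has a single domain, generators serving distinct classes are distinct. This yields at least $\lceil\frac{n}{2}\rceil$ elements of rank $n-1$, all distinct from $h$, whence $\rank(\AM_n)=|X|\geqslant\lceil\frac{n}{2}\rceil+1$. I expect the domain-descent step to be the main obstacle, and in particular checking that premultiplication by $h$ identifies exactly the domain $X_i$ with $X_{n+1-i}$; this pairing is what halves the naive count $n$ (the rank of $\AO_n$) down to $\lceil\frac{n}{2}\rceil$, so getting the fixed-point behaviour ($i=n+1-i$, possible only for odd $n$) right is essential to landing precisely on the ceiling.
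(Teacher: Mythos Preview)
Your proof is correct and follows essentially the same strategy as the paper: show $h$ must lie in any generating set, then argue that the first non-unit factor in a factorisation of a rank-$(n-1)$ element with domain $X_i$ has domain $X_i$ or $X_{n+1-i}$, and count the resulting pairs. The only difference is cosmetic: the paper uses the specific generators $x_i$ rather than arbitrary rank-$(n-1)$ elements and collapses the initial unit prefix directly to $\id_n$ or $h$ (via $h^2=\id_n$), while you phrase this as a ``domain-descent'' claim and verify the pairing $X_ih=X_{n+1-i}$ explicitly.
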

\begin{proof}
Let $\mathscr{X}$ be a set of generators of $\AM_n$. Then, clearly, we must have $h\in\mathscr{X}$. 
On the other hand, let $1\leqslant i\leqslant n$. Since $x_i\in\AM_n$ and $x_i\not\in Q_n$, 
there exist $k\geqslant1$ and $\alpha_1,\ldots,\alpha_k\in\mathscr{X}$ such that $\alpha_1\neq h$ and 
$x_i=\alpha_1\cdots\alpha_k$ or $x_i=h\alpha_1\cdots\alpha_k$. 
Therefore, $\dom(\alpha_1)=\dom(x_i)=X_i$ or $\dom(\alpha_1)=\dom(hx_i)=X_{n-i+1}$. 
It follows that $\mathscr{X}$ also possesses at least $\lceil\frac{n}{2}\rceil$ elements of rank $n-1$ with distinct domains and so  
$\rank(\AM_n)\geqslant \lceil\frac{n}{2}\rceil+1$. 
\end{proof}

Now, suppose that $n\equiv\{2,3\}\mod 4$. Recall that, in this case, we have $h\not\in\AM_n$. 
Therefore, since the group of units of $\AM_n$ is trivial, just like for $\AO_n$, 
we can say right away that: 

\begin{lemma}\label{hrk23}
If  $n\equiv\{2,3\}\mod 4$, then $\rank(\AM_n)\geqslant n$. 
\end{lemma}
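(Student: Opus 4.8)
The plan is to mirror the rank argument for $\AO_n$ that precedes Theorem \ref{rkAO}, exploiting two features of the regime $n\equiv\{2,3\}\mod 4$: the element $h$ does not belong to $\AM_n$, and the group of units $Q_n=\{\id_n\}$ is trivial. The goal is to show that any generating set $\mathscr{X}$ of $\AM_n$ must contain, for each $1\leqslant i\leqslant n$, an element whose domain is $X_i$; since distinct domains force distinct elements, this yields $|\mathscr{X}|\geqslant n$ at once.

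First I would fix $1\leqslant i\leqslant n$ and note that $\AM_n$ contains a rank $n-1$ element with domain $X_i$: since $\AO_n\subseteq\AM_n$, Proposition \ref{chAO} supplies one (for instance the partial identity $\id_{X_i}$, whose gaps $\gd=\gi=i$ trivially have the same parity). Writing such an $\alpha$ as a product of generators from $\mathscr{X}$ and deleting every factor equal to $\id_n$ (harmless, as $\id_n$ is the identity and $\alpha\neq\id_n$), I obtain $\alpha=\alpha_1\cdots\alpha_m$ with each $\alpha_j\in\mathscr{X}$ of rank at most $n-1$, because the only rank $n$ element of $\AM_n$ is $\id_n$. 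Then I would invoke the standard fact that rank cannot increase along products in $\I_n$ together with $\dom(\alpha_1\cdots\alpha_m)\subseteq\dom(\alpha_1)$: since $\alpha$ has rank $n-1$, every factor has rank exactly $n-1$, and from $\dom(\alpha)\subseteq\dom(\alpha_1)$ with $|\dom(\alpha)|=|\dom(\alpha_1)|=n-1$ I conclude $\dom(\alpha_1)=\dom(\alpha)=X_i$. Hence $\mathscr{X}$ contains a generator with domain $X_i$ for every $i$, and so $\rank(\AM_n)=|\mathscr{X}|\geqslant n$.

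I do not expect a genuine obstacle; the argument is essentially the one used for $\AO_n$, and the only point requiring (minor) care is the reduction to the leftmost factor having domain exactly $X_i$. This is precisely where the hypothesis $n\equiv\{2,3\}\mod 4$ is used: because $h\notin\AM_n$, no rank $n-1$ generator can be folded onto a different domain by a leading copy of $h$ — the phenomenon that lowers the bound to $\lceil\frac{n}{2}\rceil+1$ in Lemma \ref{hrk01} for $n\equiv\{0,1\}\mod 4$. Consequently the $n$ domains $X_1,\ldots,X_n$ genuinely require $n$ distinct generators, giving the stated lower bound.
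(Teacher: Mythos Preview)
Your proposal is correct and follows essentially the same approach as the paper: the paper simply observes that for $n\equiv\{2,3\}\mod 4$ the group of units of $\AM_n$ is trivial (since $h\notin\AM_n$), and then invokes verbatim the domain-counting argument used for $\AO_n$ before Theorem \ref{rkAO}. You have merely spelled out that argument in more detail.
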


\smallskip 

Consider $n\equiv0\mod 4$. Then, observe that, both $n$ and $\frac{n}{2}$ are even integers. 
In this case, we have the following result. 

\begin{theorem}\label{genAM0}
If $n\equiv0\mod 4$, then $\{h, x_n,x_{n-1},\ldots,x_{\frac{n}{2}+4}, x_{\frac{n}{2}+3}, 
x_{\frac{n}{2}+2}x_{\frac{n}{2}}\cdots x_6x_4, 
x_{\frac{n}{2}+1}x_{\frac{n}{2}-1}\cdots x_5x_3 \}$ is a generating set of $\AM_n$ with minimum size.
In particular,  $\rank(\AM_n)=\frac{n}{2}+1$. 
\end{theorem}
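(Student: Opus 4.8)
The plan is to show that the displayed set, which I denote $\mathscr{X}$, generates $\AM_n$ and has exactly $\frac{n}{2}+1$ elements, the latter number being precisely the lower bound of Lemma \ref{hrk01}. Since $h\in\mathscr{X}$, by Lemma \ref{hgen} and Proposition \ref{genAO} it suffices to prove that $x_1,\ldots,x_n\in\langle\mathscr{X}\rangle$. Throughout I would use freely that $h^2=\id_n$ and the conjugation relations recorded at the beginning of this section, namely $hx_ih=x_{n-i+3}^{-1}$ for $3\leqslant i\leqslant n$, together with $hx_1h=x_2^{-1}$ and $hx_2h=x_1^{-1}$ (as $n$ is even); in particular $\langle\mathscr{X}\rangle$ is closed under conjugation by $h$.

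First I would produce the low generators of small index. Conjugating the individual high generators $x_i$ ($\frac{n}{2}+3\leqslant i\leqslant n$) by $h$ gives $x_j^{-1}=hx_{n-j+3}h\in\langle\mathscr{X}\rangle$ for every $3\leqslant j\leqslant\frac{n}{2}$, since $n-j+3$ then ranges over $\{\frac{n}{2}+3,\ldots,n\}$. Next I unbundle the two products. Writing $u=x_{\frac{n}{2}+2}\,w$ with $w=x_{\frac{n}{2}}x_{\frac{n}{2}-2}\cdots x_4$, all factors of $w$ have even index $\leqslant\frac{n}{2}$, so $w^{-1}=x_4^{-1}\cdots x_{\frac{n}{2}}^{-1}\in\langle\mathscr{X}\rangle$; since $ww^{-1}=\id_{X_{\frac{n}{2}}}$ and $\im(x_{\frac{n}{2}+2})=X_{\frac{n}{2}}$, right multiplication yields $uw^{-1}=x_{\frac{n}{2}+2}$. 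The same computation applied to $v=x_{\frac{n}{2}+1}\,w'$, with $w'=x_{\frac{n}{2}-1}\cdots x_3$, gives $v(w')^{-1}=x_{\frac{n}{2}+1}$. Consequently $w=x_{\frac{n}{2}+2}^{-1}u$ and $w'=x_{\frac{n}{2}+1}^{-1}v$ lie in $\langle\mathscr{X}\rangle$, and each individual low generator $x_j$ ($3\leqslant j\leqslant\frac{n}{2}$) is recovered by two-sided cancellation, for instance $x_{2m}=(x_{\frac{n}{2}}\cdots x_{2m+2})^{-1}\,w\,(x_{2m-2}\cdots x_4)^{-1}$, all of whose factors are inverses of low generators already obtained. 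At this point $x_3,\ldots,x_n\in\langle\mathscr{X}\rangle$.

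It remains to capture $x_1$ and $x_2$. Composing the positive even, respectively odd, generators along the chain of gaps gives $x_2^{-1}=x_nx_{n-2}\cdots x_4$ and $x_1^{-1}=x_{n-1}x_{n-3}\cdots x_3$, both in $\langle\mathscr{X}\rangle$. Conjugating by $h$ and invoking $hx_2h=x_1^{-1}$ and $hx_1h=x_2^{-1}$ then yields $x_1=hx_2^{-1}h$ and $x_2=hx_1^{-1}h$ in $\langle\mathscr{X}\rangle$. Hence $\{x_1,\ldots,x_n\}\subseteq\langle\mathscr{X}\rangle$, and therefore $\langle\mathscr{X}\rangle=\AM_n$.

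Finally, $\mathscr{X}$ consists of $h$, the $\frac{n}{2}-2$ generators $x_{\frac{n}{2}+3},\ldots,x_n$, and the two products, so $|\mathscr{X}|=\frac{n}{2}+1$; combined with the bound $\rank(\AM_n)\geqslant\lceil\frac{n}{2}\rceil+1=\frac{n}{2}+1$ of Lemma \ref{hrk01}, this proves both that $\mathscr{X}$ is a generating set of minimum size and that $\rank(\AM_n)=\frac{n}{2}+1$. I expect the main obstacle to be the unbundling step: the whole economy of the generating set comes from replacing the generators $x_3,\ldots,x_{\frac{n}{2}+2}$ by the two parity-sorted products $u$ and $v$, and the crux is to check that these two words, together with conjugation by $h$, still allow one to reconstruct every individual $x_j$ through the idempotent cancellations above. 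Once this is in place, the recovery of $x_1,x_2$ and the minimality count are routine.
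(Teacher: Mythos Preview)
Your argument is correct and follows essentially the same route as the paper's proof: obtain $x_j^{-1}$ for $3\leqslant j\leqslant\frac{n}{2}$ by conjugating the high generators by $h$, peel off $x_{\frac{n}{2}+1}$ and $x_{\frac{n}{2}+2}$ from the two products by right-cancellation, recover the remaining low $x_j$ by two-sided cancellation, and finally get $x_1,x_2$ via $x_1^{-1}=x_{n-1}x_{n-3}\cdots x_3$, $x_2^{-1}=x_nx_{n-2}\cdots x_4$ and conjugation. The one step you leave implicit --- that $x_{\frac{n}{2}+1}^{-1}=hx_{\frac{n}{2}+2}h$ and $x_{\frac{n}{2}+2}^{-1}=hx_{\frac{n}{2}+1}h$ lie in $\langle\mathscr{X}\rangle$ once $x_{\frac{n}{2}+1},x_{\frac{n}{2}+2}$ do --- is stated explicitly in the paper and is what licenses your ``consequently $w=x_{\frac{n}{2}+2}^{-1}u\in\langle\mathscr{X}\rangle$''; it is covered by your blanket invocation of the conjugation relations, so there is no gap.
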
 
\begin{proof}
Let $M=\langle h, x_n,x_{n-1},\ldots,x_{\frac{n}{2}+4}, x_{\frac{n}{2}+3}, 
x_{\frac{n}{2}+2}x_{\frac{n}{2}}\cdots x_6x_4, x_{\frac{n}{2}+1}x_{\frac{n}{2}-1}\cdots x_5x_3 \rangle$. 
In view of Proposition \ref{genAO} and Lemmas \ref{hgen} and \ref{hrk01}, to prove this theorem, it suffices to show that $x_1,\ldots,x_{\frac{n}{2}+2}\in M$. 

First, observe that $x_{n-i+3}^{-1}=hx_ih\in M$, for $i=\frac{n}{2}+3,\ldots,n$, i.e. $x_i^{-1}\in M$, for $3\leqslant i\leqslant \frac{n}{2}$. 
Hence,
$$
x_{\frac{n}{2}+1} = (x_{\frac{n}{2}+1}x_{\frac{n}{2}-1}\cdots x_5x_3)x_3^{-1}x_5^{-1}\cdots x_{\frac{n}{2}-1}^{-1}\in M
\quad\text{and}\quad 
x_{\frac{n}{2}+2} = (x_{\frac{n}{2}+2}x_{\frac{n}{2}}\cdots x_6x_4)x_4^{-1}x_6^{-1}\cdots x_{\frac{n}{2}}^{-1} \in M. 
$$
It follows that 
$$
x_{\frac{n}{2}+1}^{-1} =h x_{\frac{n}{2}+2} h\in M
\quad\text{and}\quad 
x_{\frac{n}{2}+2}^{-1} = h x_{\frac{n}{2}+1} h \in M. 
$$
Thus,  
$$
x_{2i+1} = x_{2i+3}^{-1}x_{2i+5}^{-1}\cdots x_{\frac{n}{2}+1}^{-1} (x_{\frac{n}{2}+1}x_{\frac{n}{2}-1}\cdots x_5x_3)x_3^{-1}x_5^{-1}\cdots x_{2i-1}^{-1}\in M
$$
and 
$$
x_{2i+2} = x_{2i+4}^{-1}x_{2i+6}^{-1}\cdots x_{\frac{n}{2}+2}^{-1} (x_{\frac{n}{2}+2}x_{\frac{n}{2}}\cdots x_6x_4)x_4^{-1}x_6^{-1}\cdots x_{2i}^{-1} \in M 
$$
for $2\leqslant i\leqslant \frac{n}{4}-1$, 
i.e. $x_i\in M$, for $5\leqslant i\leqslant\frac{n}{2}$.  
In addition, 
$$
x_3 = x_{5}^{-1}x_{7}^{-1}\cdots x_{\frac{n}{2}+1}^{-1} (x_{\frac{n}{2}+1}x_{\frac{n}{2}-1}\cdots x_5x_3)\in M 
\quad\text{and}\quad 
x_{4} = x_{6}^{-1}x_{8}^{-1}\cdots x_{\frac{n}{2}+2}^{-1} (x_{\frac{n}{2}+2}x_{\frac{n}{2}}\cdots x_6x_4) \in M, 
$$ 
Finally,  
$$
x_1^{-1}=x_{n-1}x_{n-3}\cdots x_5x_3 \in M\quad\text{and}\quad x_2^{-1}=x_nx_{n-2}\cdots x_6x_4\in M, 
$$
whence $x_1=hx_2^{-1}h\in M$ and $x_2=hx_1^{-1}h\in M$, as required.
\end{proof}

\smallskip 

Now, we consider that $n\equiv1\mod 4$. In this case, both $n$ and $\frac{n+1}{2}$ are odd integers. 

\begin{theorem}\label{genAM1}
If $n\equiv1\mod 4$, then $\{h, x_n,x_{n-1},\ldots,x_{\frac{n+7}{2}}, x_{\frac{n+5}{2}}, 
x_{\frac{n+3}{2}}x_{\frac{n-1}{2}}\cdots x_6x_4, 
x_{\frac{n+1}{2}}x_{\frac{n-3}{2}}\cdots x_5x_3 \}$ is a generating set of $\AM_n$ with minimum size.
In particular,  $\rank(\AM_n)=\frac{n+1}{2}+1$. 
\end{theorem}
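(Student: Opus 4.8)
The plan is to mirror closely the argument used for Theorem \ref{genAM0}, adjusting the index bookkeeping to the parities forced by $n\equiv1\mod4$: here $n$ is odd, $\frac{n+1}{2}$ is odd and $\frac{n+3}{2}$ is even. Writing $M$ for the submonoid generated by the proposed set, the first move is the same reduction as in the $n\equiv0\mod4$ case: by Proposition \ref{genAO} and Lemmas \ref{hgen} and \ref{hrk01} it suffices to prove that $x_1,\ldots,x_{\frac{n+3}{2}}\in M$, since $h$ and the singletons $x_{\frac{n+5}{2}},\ldots,x_n$ already lie in $M$. Once all the $x_i$ are shown to be in $M$, Proposition \ref{genAO} gives $\AO_n\subseteq M$, Lemma \ref{hgen} gives $\AM_n=\langle\AO_n,h\rangle\subseteq M$, and then comparing the size $\frac{n+3}{2}=\lceil\frac{n}{2}\rceil+1$ of the generating set with the lower bound of Lemma \ref{hrk01} yields minimality and $\rank(\AM_n)=\frac{n+1}{2}+1$.

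The second step is to harvest inverses via $h$. Since $n$ is odd, applying the relation $hx_ih=x_{n-i+3}^{-1}$ to the explicit generators $x_{\frac{n+5}{2}},\ldots,x_n$ produces $x_j^{-1}\in M$ for every $3\leqslant j\leqslant\frac{n+1}{2}$. I would then extract the two leading block factors: right-multiplying $x_{\frac{n+1}{2}}x_{\frac{n-3}{2}}\cdots x_5x_3$ by $x_3^{-1}x_5^{-1}\cdots x_{\frac{n-3}{2}}^{-1}$ gives $x_{\frac{n+1}{2}}\in M$, and right-multiplying $x_{\frac{n+3}{2}}x_{\frac{n-1}{2}}\cdots x_6x_4$ by $x_4^{-1}x_6^{-1}\cdots x_{\frac{n-1}{2}}^{-1}$ gives $x_{\frac{n+3}{2}}\in M$; all inverses used are already available from the previous sentence. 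The index $\frac{n+3}{2}$ is the one fixed by $h$-conjugation, $hx_{\frac{n+3}{2}}h=x_{\frac{n+3}{2}}^{-1}$, so this last extraction also supplies $x_{\frac{n+3}{2}}^{-1}\in M$, completing the stock of inverses to the whole range $3\leqslant j\leqslant\frac{n+3}{2}$.

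With these inverses in hand, the heart of the argument is the same peeling computation as in Theorem \ref{genAM0}: for each interior index I would sandwich the appropriate block generator between the matching reversed strings of inverse factors to isolate a single $x_{2i+1}$ (from the odd block, using inverses up to $x_{\frac{n+1}{2}}^{-1}$) or a single $x_{2i+2}$ (from the even block, now using $x_{\frac{n+3}{2}}^{-1}$), giving $x_i\in M$ for all $3\leqslant i\leqslant\frac{n+3}{2}$. For the two exceptional generators I would use that, since $n$ is odd, $x_1^{-1}=x_nx_{n-2}\cdots x_5x_3$ and $x_2^{-1}=x_{n-1}x_{n-3}\cdots x_6x_4$ are products of the $x_i$ with $3\leqslant i\leqslant n$ (all now in $M$), since order-preserving partial permutations are determined by domain and image; then $x_1=hx_1^{-1}h$ and $x_2=hx_2^{-1}h$ follow from the odd-$n$ relations $hx_1h=x_1^{-1}$ and $hx_2h=x_2^{-1}$. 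This establishes $x_1,\ldots,x_n\in M$ and finishes the proof.

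I expect the only genuine obstacle to be the parity bookkeeping. In the $n\equiv0\mod4$ case $\frac{n}{2}$ is even, whereas here the two block indices $\frac{n+1}{2}$ and $\frac{n+3}{2}$ have opposite parities, so one must track precisely which inverses are available (those with $j\leqslant\frac{n+1}{2}$ from the $h$-harvest, versus the extra $x_{\frac{n+3}{2}}^{-1}$) before each extraction, and check that the self-conjugacy of $x_{\frac{n+3}{2}}$ under $h$ delivers the one otherwise-missing inverse at exactly the moment the even block is peeled. Everything else reduces to routine verification that the displayed products realize the asserted domains and images.
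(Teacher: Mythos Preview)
Your proposal is correct and follows essentially the same route as the paper's own proof: the same reduction via Proposition~\ref{genAO} and Lemmas~\ref{hgen}, \ref{hrk01}, the same harvesting of $x_j^{-1}$ for $3\leqslant j\leqslant\frac{n+1}{2}$ via $hx_ih=x_{n-i+3}^{-1}$, the same extraction of $x_{\frac{n+3}{2}}$ and then $x_{\frac{n+3}{2}}^{-1}=hx_{\frac{n+3}{2}}h$, the same sandwich formulas to isolate each $x_{2i+1}$ and $x_{2i+2}$, and the same final move $x_1=hx_1^{-1}h$, $x_2=hx_2^{-1}h$ with $x_1^{-1}=x_nx_{n-2}\cdots x_3$ and $x_2^{-1}=x_{n-1}x_{n-3}\cdots x_4$. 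The only cosmetic difference is that you isolate $x_{\frac{n+1}{2}}$ explicitly before the general peeling, whereas the paper lets the general formula absorb that case.
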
 
\begin{proof}
The proof of this theorem will be quite analogous to the previous one. 

Let $M=\langle h, x_n,x_{n-1},\ldots,x_{\frac{n+7}{2}}, x_{\frac{n+5}{2}}, 
x_{\frac{n+3}{2}}x_{\frac{n-1}{2}}\cdots x_6x_4, x_{\frac{n+1}{2}}x_{\frac{n-3}{2}}\cdots x_5x_3 \rangle$. 
Bearing in mind Proposition \ref{genAO} and Lemmas \ref{hgen} and \ref{hrk01},  we just have to show that $x_1,\ldots,x_{\frac{n+3}{2}}\in M$. 

Since $x_{n-i+3}^{-1}=hx_ih\in M$, for $i=\frac{n+5}{2},\ldots,n$, i.e. $x_i^{-1}\in M$, for $3\leqslant i\leqslant \frac{n+1}{2}$, 
we have 
$$
x_{\frac{n+3}{2}} = (x_{\frac{n+3}{2}}x_{\frac{n-1}{2}}\cdots x_6x_4)x_4^{-1}x_6^{-1}\cdots x_{\frac{n-1}{2}}^{-1}\in M
$$
and so 
$
x_{\frac{n+3}{2}}^{-1} =h x_{\frac{n+3}{2}} h\in M
$. 
Thus,  
$$
x_{2i+1} = x_{2i+3}^{-1}x_{2i+5}^{-1}\cdots x_{\frac{n+1}{2}}^{-1} (x_{\frac{n+1}{2}}x_{\frac{n-3}{2}}\cdots x_5x_3)x_3^{-1}x_5^{-1}\cdots x_{2i-1}^{-1}\in M
$$
and 
$$
x_{2i+2} = x_{2i+4}^{-1}x_{2i+6}^{-1}\cdots x_{\frac{n+3}{2}}^{-1} (x_{\frac{n+3}{2}}x_{\frac{n-1}{2}}\cdots x_6x_4)x_4^{-1}x_6^{-1}\cdots x_{2i}^{-1} \in M  
$$
for $2\leqslant i\leqslant \frac{n-1}{4}$, i.e. $x_i\in M$, for $5\leqslant i\leqslant\frac{n+3}{2}$. 
Moreover, 
$$
x_{3} = x_{5}^{-1}x_{7}^{-1}\cdots x_{\frac{n+1}{2}}^{-1} (x_{\frac{n+1}{2}}x_{\frac{n-3}{2}}\cdots x_5x_3)\in M
\quad\text{and}\quad
x_{4} = x_{6}^{-1}x_{8}^{-1}\cdots x_{\frac{n+3}{2}}^{-1} (x_{\frac{n+3}{2}}x_{\frac{n-1}{2}}\cdots x_6x_4) \in M. 
$$
At last,
$$
x_1=hx_1^{-1}h=hx_nx_{n-2}\cdots x_5x_3h \in M\quad\text{and}\quad x_2=hx_2^{-1}h=hx_{n-1}x_{n-3}\cdots x_6x_4h\in M, 
$$
as required.
\end{proof}

\smallskip 

Next, consider $n\equiv2\mod 4$. Hence, $n$ is even and $\frac{n}{2}$ is odd. 

Let $h_i=\stackrel{\leftarrow}{\id}_{X_i}$, for $1\leqslant i\leqslant n$. 
Hence, for $1\leqslant i\leqslant n$, we get $h_i^2=\id_{X_i}$ and, as $\id_{X_i}\in\AM_n$ then, by Corollary \ref{AM12}, we get $h_i\in\AM_n$.  
Moreover, we have: 

\begin{lemma}\label{gen2}
If  $n\equiv2\mod 4$, then $\AM_n=\langle \AO_n,h_1,\ldots,h_n\rangle$. 
\end{lemma}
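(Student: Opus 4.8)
The inclusion $\langle\AO_n,h_1,\ldots,h_n\rangle\subseteq\AM_n$ is immediate, since $\AO_n\subseteq\AM_n$ and, as observed just before the statement, each $h_i$ lies in $\AM_n$ by Corollary \ref{AM12}. For the reverse inclusion, the plan is to take an arbitrary $\alpha\in\AM_n$ and reduce everything to the order-reversing case. If $\alpha$ is order-preserving, then $\alpha\in\AM_n\cap\POI_n=\AO_n$ and there is nothing to prove; and if $|\im(\alpha)|\leqslant 1$ then $\alpha$ is automatically order-preserving. So I would assume $\alpha$ is order-reversing with $|\im(\alpha)|\geqslant 2$ and aim to factor it through the generators.

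The first key step is to write $B=\im(\alpha)$ and use the factorization $\alpha=\stackrel{\leftarrow}{\alpha}\,\stackrel{\leftarrow}{\id_B}$, where $\stackrel{\leftarrow}{\id_B}$ is the reverse transformation of $\id_B$, i.e.\ the unique order-reversing permutation of $B$. Indeed, $\stackrel{\leftarrow}{\alpha}$ is order-preserving from $\dom(\alpha)$ onto $B$ and $\stackrel{\leftarrow}{\id_B}$ is order-reversing on $B$, so their product is the unique order-reversing partial permutation with domain $\dom(\alpha)$ and image $B$, which is $\alpha$. Next I would check that $\stackrel{\leftarrow}{\alpha}\in\AO_n$: it is order-preserving and it belongs to $\AM_n$ — by Corollary \ref{AM12} when $|\im(\alpha)|=n-1$, and because every element of $\PMI_n$ of rank at most $n-2$ lies in $\AM_n$ otherwise — so $\stackrel{\leftarrow}{\alpha}\in\AM_n\cap\POI_n=\AO_n$. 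It then remains only to realize $\stackrel{\leftarrow}{\id_B}$ inside the submonoid. When $|B|=n-1$, say $B=X_i$, we have $\stackrel{\leftarrow}{\id_B}=h_i$ by definition, a generator, and the factorization $\alpha=\stackrel{\leftarrow}{\alpha}\,h_i$ finishes this case.

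The main obstacle is the case $|B|\leqslant n-2$, where $\stackrel{\leftarrow}{\id_B}$ is no longer one of the $h_i$. Here I would pick any $i\in\Omega_n\setminus B$ (possible since $B$ omits at least two points), restrict $h_i$ to $B$, and re-align the image order-preservingly. Concretely, letting $C$ be the image of $B$ under $h_i$ and $\gamma=\transf{C\\B}\in\POI_n$ the order-preserving bijection from $C$ onto $B$, I claim $\stackrel{\leftarrow}{\id_B}=\id_B\,h_i\,\gamma$: this product has domain and image $B$ and is order-reversing (a restriction of the order-reversing $h_i$ followed by the order-preserving $\gamma$), hence coincides with the unique order-reversing permutation of $B$. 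Since $\id_B$ and $\gamma$ are order-preserving of rank $|B|\leqslant n-2$, both lie in $\AO_n$, and therefore $\stackrel{\leftarrow}{\id_B}\in\langle\AO_n,h_1,\ldots,h_n\rangle$; combining this with the factorization gives $\alpha=\stackrel{\leftarrow}{\alpha}\,\id_B\,h_i\,\gamma\in\langle\AO_n,h_1,\ldots,h_n\rangle$, as required. I would also remark that the order-reversing rank-$n$ map $h$ never intervenes, since $h\notin\AM_n$ for $n\equiv 2\mod 4$, so no flip of full rank ever needs to be produced.
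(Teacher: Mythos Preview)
Your argument is correct. The factorization $\alpha=\stackrel{\leftarrow}{\alpha}\,\stackrel{\leftarrow}{\id_B}$ is valid, your justification that $\stackrel{\leftarrow}{\alpha}\in\AO_n$ (via Corollary~\ref{AM12} at rank $n-1$ and trivially at lower ranks) is sound, and the further decomposition $\stackrel{\leftarrow}{\id_B}=\id_B\,h_i\,\gamma$ when $|B|\leqslant n-2$ works exactly as you describe.

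That said, the paper's proof is considerably shorter and avoids the case split entirely. Instead of working on the image side, it works on the domain side: since any $\alpha\in\AM_n\setminus\AO_n$ has rank at most $n-1$ (because $h\notin\AM_n$), one can choose $i$ with $\dom(\alpha)\subseteq X_i$ and write $\alpha=\id_{X_i}\alpha=h_i^2\alpha=h_i(h_i\alpha)$. The product $h_i\alpha$ is then order-preserving (two order-reversing maps composed) and lies in $\AM_n$, hence in $\AO_n$, finishing the proof in one line. Your approach trades this single use of $h_i^2=\id_{X_i}$ for an explicit reduction to the ``reverse identity'' on the image, which is perfectly fine but introduces a rank-based case analysis and an auxiliary $\gamma$ that the domain-side trick renders unnecessary.
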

\begin{proof}
Let $\alpha\in \AM_n\setminus\AO_n$. Then, $\alpha$ has rank less than or equal to $n-1$ (as $h\notin\AM_n$) 
and so there exists $1\leqslant i\leqslant n$ such that $\dom(\alpha)\subseteq X_i$. 
Hence, $\alpha=\id_{X_i}\alpha=h_i(h_i\alpha)\in \langle \AO_n,h_1,\ldots,h_n\rangle$, as required. 
\end{proof}

\begin{theorem}\label{genAM2}
If $n\equiv2\mod 4$, then $\{ x_1h_{n-1},x_2h_n,x_3,\ldots,x_n \}$ is a generating set of $\AM_n$ with minimum size.
In particular,  $\rank(\AM_n)=n$. 
\end{theorem}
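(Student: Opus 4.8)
The plan is to combine the known lower bound with a generation argument. By Lemma~\ref{hrk23} we already have $\rank(\AM_n)\geqslant n$, and the proposed set has exactly $n$ elements, so it will suffice to prove that $M:=\langle x_1h_{n-1},x_2h_n,x_3,\ldots,x_n\rangle=\AM_n$. By Lemma~\ref{gen2} together with Proposition~\ref{genAO} we know $\AM_n=\langle x_1,\ldots,x_n,h_1,\ldots,h_n\rangle$; since $x_3,\ldots,x_n$ are already among the generators of $M$, the task reduces to showing that $x_1,x_2\in M$ and that $h_1,\ldots,h_n\in M$.

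First I would recover $x_1^{-1}$ and $x_2^{-1}$ inside $M$. The point is that although $x_1$ and $x_2$ were dropped, their inverses survive as descending products of the retained generators: $x_{n-1}x_{n-3}\cdots x_3$ is an order-preserving bijection $X_{n-1}\to X_1$ and $x_nx_{n-2}\cdots x_4$ an order-preserving bijection $X_n\to X_2$ (these chains close up precisely because $n$ is even, so the indices have matching parity and reach the gap values $1$ and $2$). By the uniqueness of the order-preserving map with prescribed domain and image, these products equal $x_1^{-1}$ and $x_2^{-1}$, so $x_1^{-1},x_2^{-1}\in M$. Now I untwist the two mixed generators: since $x_1^{-1}x_1=\id_{X_{n-1}}$ and $h_{n-1}^2=\id_{X_{n-1}}$, one has $x_1^{-1}(x_1h_{n-1})=h_{n-1}$ and $(x_1h_{n-1})h_{n-1}=x_1$, and symmetrically $h_n,x_2\in M$. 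Hence $x_1,\ldots,x_n\in M$, and therefore $\AO_n\subseteq M$ by Proposition~\ref{genAO}.

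It remains to put every $h_i$ into $M$, which I would do by conjugating the two reversals $h_{n-1}$ and $h_n$ already obtained. Splitting on parity, for odd $i$ I write $h_i=\beta h_{n-1}\gamma$ with order-preserving $\beta\colon X_i\to X_{n-1}$ and $\gamma\colon X_{n-1}\to X_i$, and for even $i$ I write $h_i=\beta h_n\gamma$ with order-preserving $\beta\colon X_i\to X_n$ and $\gamma\colon X_n\to X_i$. In each case the gaps of $\beta$ and $\gamma$ share the required common parity (both odd, resp.\ both even), so $\beta,\gamma\in\AO_n\subseteq M$ by Proposition~\ref{chAO}; being the composite of an order-preserving, an order-reversing and an order-preserving bijection, each $\beta h_{n-1}\gamma$ (resp.\ $\beta h_n\gamma$) is the unique order-reversing self-map of $X_i$, namely $h_i$. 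Thus all $h_i\in M$, and by Lemma~\ref{gen2} we conclude $\AM_n=\langle\AO_n,h_1,\ldots,h_n\rangle\subseteq M$, so $M=\AM_n$ and $\rank(\AM_n)=n$.

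The main obstacle is the untwisting step. The delicate observation is that discarding $x_1,x_2$ loses nothing, because their inverses reappear as the products $x_{n-1}x_{n-3}\cdots x_3$ and $x_nx_{n-2}\cdots x_4$, and that multiplying the mixed generators $x_1h_{n-1},x_2h_n$ by these (or by $h_{n-1},h_n$) cleanly separates them into their order-preserving and order-reversing factors. The accompanying parity bookkeeping---which forces $h_{n-1}$ to be used for odd indices and $h_n$ for even ones, and which succeeds only because $n\equiv2\mod 4$ guarantees each $h_i\in\AM_n$---is the only point requiring genuine care; everything else is routine verification of domains and images.
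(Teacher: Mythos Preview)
Your proof is correct and follows essentially the same approach as the paper's: both rest on the identity $x_{n-1}x_{n-3}\cdots x_3=x_1^{-1}$ (and its even analogue) and on conjugating the reversal factor by order-preserving rank-$(n-1)$ maps of matching gap parity to reach every $h_i$. The only difference is organizational---the paper writes explicit one-line formulas such as $x_1=(x_1h_{n-1})x_{n-1}x_{n-3}\cdots x_3(x_1h_{n-1})$ and $h_{2i-1}=x_{2i-1}\cdots x_3(x_1h_{n-1})x_{n-1}\cdots x_{2i+1}$ using the mixed generator directly, whereas you first isolate $h_{n-1},h_n$ and then appeal to $\AO_n\subseteq M$---but the underlying computation is identical.
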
 
\begin{proof}
Let $M=\langle x_1h_{n-1},x_2h_n,x_3,\ldots,x_n \rangle$.  
Then, 
$$
x_1=(x_1h_{n-1})x_{n-1}x_{n-3}\cdots x_5x_3(x_1h_{n-1}), \quad 
x_2=(x_2h_n)x_nx_{n-2}\cdots x_6x_4(x_2h_n) 
$$
and, for $1\leqslant i\leqslant\frac{n}{2}$, 
$$
h_{2i-1}=x_{2i-1}\cdots x_5x_3 (x_1h_{n-1})x_{n-1}x_{n-3}\cdots x_{2i+1} 
\quad\text{and}\quad
h_{2i}=x_{2i}\cdots x_6x_4 (x_2h_n)x_nx_{n-2}\cdots x_{2i+2},  
$$
whence $x_1,x_2,h_1,\ldots,h_n\in M$. 
Therefore, the result follows by Proposition \ref{genAO} and Lemmas \ref{gen2} and \ref{hrk23}. 
\end{proof}

\smallskip 

Finally, consider $n\equiv3\mod 4$. Observe that, $n$ is odd and $\frac{n+1}{2}$ is even.

Let $y_i=\binom{X_i}{X_{i+1}}\in\PMI_n\setminus\POI_n$, for $1\leqslant i\leqslant n-1$, and $y_n=\binom{X_n}{X_1}\in\POI_n$. 
Then, by Proposition \ref{chAO}, $y_n\in\AO_n$ (and so $y_n\in\AM_n$) and, by Proposition \ref{chAM}(1), $y_i\in\AM_n$, for $1\leqslant i\leqslant n-1$. 
Furthermore, we have: 

\begin{theorem}\label{genAM3}
If $n\equiv3\mod 4$, then $\{ y_1,\ldots,y_n \}$ is a generating set of $\AM_n$ with minimum size.
In particular,  $\rank(\AM_n)=n$. 
\end{theorem}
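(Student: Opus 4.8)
The plan is to deduce the whole statement from the internal structure of the top proper $\mathscr{J}$-class $Q_{n-1}$. The lower bound is already in hand: since $n\equiv 3\mod 4$ forces $h\notin\AM_n$ and the group of units $Q_n=\{\id_n\}$ is trivial, Lemma \ref{hrk23} gives $\rank(\AM_n)\geqslant n$. As $\{y_1,\ldots,y_n\}$ has exactly $n$ elements, it therefore suffices to prove that these elements generate $\AM_n$; minimality and the equality $\rank(\AM_n)=n$ then follow at once.

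The crucial observation is that, for $n\equiv 3\mod 4$, Propositions \ref{chAO} and \ref{chAM}(1) together show that for every pair $(i,j)$ there is a \emph{unique} element $z_{i,j}\in\AM_n$ of rank $n-1$ with $\dom(z_{i,j})=X_i$ and $\im(z_{i,j})=X_j$; moreover $z_{i,j}$ is order-preserving exactly when $i\equiv j\mod 2$ and order-reversing otherwise. This uniqueness yields the matrix-unit relation $z_{i,j}z_{j,k}=z_{i,k}$ for nothing: the product lies in $\AM_n$, has rank $n-1$, domain $X_i$ and image $X_k$, hence must coincide with $z_{i,k}$ (and, consistently, is order-preserving iff $i\equiv k$). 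Since $y_i=z_{i,i+1}$ for $1\leqslant i\leqslant n-1$ and $y_n=z_{n,1}$, telescoping consecutive generators along the cyclic order $1,2,\ldots,n,1$ recovers every $z_{i,j}$: one has $z_{i,j}=y_iy_{i+1}\cdots y_{j-1}$ if $i<j$, $z_{i,j}=y_i\cdots y_{n-1}y_ny_1\cdots y_{j-1}$ if $i>j$, and $z_{i,i}=y_i\cdots y_{n-1}y_ny_1\cdots y_{i-1}$. Thus the entire $\mathscr{J}$-class $Q_{n-1}$ is contained in $\langle y_1,\ldots,y_n\rangle$.

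It remains to reach the lower ranks. The generators $x_1,\ldots,x_n$ of $\AO_n$ are themselves order-preserving rank-$(n-1)$ elements with same-parity gaps, i.e. they are among the $z_{i,j}$, so Proposition \ref{genAO} yields $\AO_n\subseteq\langle y_1,\ldots,y_n\rangle$; in particular every order-preserving element of $\AM_n$, of any rank, is generated. For an order-reversing $\alpha\in\AM_n$ of rank $k\leqslant n-2$ I would choose some $i\notin\dom(\alpha)$ (possible since $|\dom(\alpha)|\leqslant n-2$) and an order-reversing $z=z_{i,j}\in Q_{n-1}$ with $\dom(z)=X_i$. Then $z^{-1}=z_{j,i}$ is again in $\langle y_1,\ldots,y_n\rangle$, the product $z^{-1}\alpha$ is order-preserving of rank $k\leqslant n-2$ and hence lies in $\AO_n$, and since $zz^{-1}=\id_{X_i}$ fixes $\dom(\alpha)$ we recover $\alpha=z(z^{-1}\alpha)\in\langle y_1,\ldots,y_n\rangle$. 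Together with the order-preserving case and the trivial fact that the identity $\id_n$ belongs to every submonoid, this proves $\langle y_1,\ldots,y_n\rangle=\AM_n$.

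The one place demanding care, which I would write out in full, is the parity bookkeeping behind the existence and uniqueness of the $z_{i,j}$: exactly one of the order-preserving and the order-reversing partial permutation $\transf{X_i\\X_j}$ must be shown to lie in $\AM_n$ in each case. This is precisely where the hypothesis $n\equiv 3\mod 4$ (through the distinct-parity criterion of Proposition \ref{chAM}(1), as opposed to the same-parity criterion valid for $n\equiv 2\mod 4$) is essential; without it the $z_{i,j}$ are not uniquely determined and the clean telescoping breaks down. Everything else reduces to routine composition of partial permutations, so I anticipate no further obstacle.
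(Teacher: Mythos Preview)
Your proof is correct and follows the same overall architecture as the paper: lower bound from Lemma~\ref{hrk23}, generation of all of $Q_{n-1}$ by cyclically telescoping the $y_i$, then $\AO_n$ via Proposition~\ref{genAO}, and finally the low-rank order-reversing elements. Where you differ is in the packaging and in the last step. You abstract the $Q_{n-1}$ argument into a clean ``matrix-unit'' relation $z_{i,j}z_{j,k}=z_{i,k}$, deduced from the uniqueness of a rank-$(n-1)$ element of $\AM_n$ with prescribed domain and image; the paper instead writes down the same telescoping products explicitly and verifies each by hand. For rank $\leqslant n-2$ order-reversing $\alpha$ you take a shorter route than the paper: the paper first builds $\stackrel{\leftarrow}{\id}_{\{1,\ldots,k\}}$ as an explicit product $\xi_1\xi_2$ with $\xi_1\in\AO_n$ and $\xi_2\in Q_{n-1}$, then factors $\alpha=\alpha_1\stackrel{\leftarrow}{\id}_{\{1,\ldots,k\}}\alpha_2$ through $\AO_n$, whereas you simply write $\alpha=z(z^{-1}\alpha)$ for any order-reversing $z\in Q_{n-1}$ whose domain contains $\dom(\alpha)$. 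Your formulation makes the role of the hypothesis $n\equiv 3\mod 4$ more transparent --- it is precisely what forces the $\mathscr{H}$-classes in $Q_{n-1}$ to be trivial, giving the uniqueness that drives the matrix-unit identity --- while the paper's version avoids stating that auxiliary fact at the cost of a slightly longer explicit computation.
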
 
\begin{proof}
Let $M=\langle y_1,\ldots,y_n \rangle$.  
First, we show that $\AO_n\subseteq M$. 
In fact, $x_1=y_1\cdots y_{n-1}$, $x_2=y_2\cdots y_{n-2}$,  $x_3=y_3\cdots y_{n}$ and, for $4\leqslant i\leqslant n$, 
$x_i=y_i\cdots y_{n-1}y_ny_1\cdots y_{i-3}$, whence $x_1,\ldots, x_n\in M$ and so, 
by Proposition \ref{genAO}, $\AO_n\subseteq M$. 

Next, we show that all elements of $\AM_n\setminus\AO_n$ with rank $n-1$ belong to $M$. 
So, let $\alpha$ be such an element. 
Then, $\alpha=\binom{X_i}{X_j}\in\PMI_n\setminus\POI_n$, for some $1\leqslant i,j\leqslant n$. Hence,  
if $j=1$, then $\alpha=y_i\cdots y_{n-1}y_n\in M$ and, if $2\leqslant j\leqslant n$, then 
$\alpha=y_i\cdots y_{n-1}y_ny_1\cdots y_{j-1}\in M$. 

In order to show that also the elements of $\AM_n\setminus\AO_n$ with rank less than $n-1$ belong to $M$, 
we begin to show that $\stackrel{\leftarrow}{\id}_{\{1,\ldots,k\}}\in M$, for $2\leqslant k\leqslant n-2$.  
So, let $2\leqslant k\leqslant n-2$. Take 
$$
\xi_1=\transf{1&2&\cdots & k-1 &k\\ n-k & n-k+1 & \cdots& n-2& n-1}
\quad\text{and}\quad 
\xi_2=\transf{1&2&3& \cdots & n-3& n-2 &n-1\\ n & n-2 & n-3 & \cdots& 3& 2& 1}. 
$$
Then, $\stackrel{\leftarrow}{\id}_{\{1,\ldots,k\}}=\xi_1\xi_2$ and, 
as $\xi_1\in\AO_n$ and $\xi_2\in\AM_n$, from what has already been established, we have $\xi_1,\xi_2\in M$, 
whence $\stackrel{\leftarrow}{\id}_{\{1,\ldots,k\}}\in M$. 
Now, let $\alpha$ be an element of $\AM_n\setminus\AO_n$ with rank $k$. Then, 
take $\alpha_1=\binom{\dom(\alpha)}{\{1,\ldots,k\}},\alpha_2=\binom{\{1,\ldots,k\}}{\im(\alpha)}\in\POI_n$. 
Then, $\alpha_1,\alpha_2\in \AO_n\subseteq M$ and so $\alpha=\alpha_1 \stackrel{\leftarrow}{\id}_{\{1,\ldots,k\}}\alpha_2\in M$.  
Now, since all elements of $\AM_n\setminus\AO_n$ have rank greater than or equal to two, we conclude that $\AM_n\setminus\AO_n\subseteq M$ 
and so $M=\AM_n$. 

Finally, by Lemma \ref{hrk23}, it follows that $\{ y_1,\ldots,y_n \}$ is a generating set of $\AM_n$ with minimum size and so, $\AM_n$ has rank $n$, as required. 
\end{proof}

\section*{Acknowledgment}

This work is funded by national funds through the FCT - Funda\c c\~ao para a Ci\^encia e a Tecnologia, I.P., 
under the scope of the projects UID/00297/2025 (https://doi.org/10.54499/UID/00297/2025) and 
UID/PRR/00297/2025 (https://doi.org/10.54499/UID/PRR/00297/2025) (Center for Mathematics and Applications \-- NOVA Math).  

\smallskip 

The author would like to thank the anonymous referees for their comments and suggestions.

\section*{Declarations} 

The author declares no conflicts of interest.

\end{document}